\newcommand\reallywidehat[1]{%
\savestack{\tmpbox}{\stretchto{%
  \scaleto{%
    \scalerel*[\widthof{\ensuremath{#1}}]{\kern-.6pt\bigwedge\kern-.6pt}%
    {\rule[-\textheight/2]{1ex}{\textheight}}
  }{\textheight}%
}{0.5ex}}%
\stackon[1pt]{#1}{\tmpbox}%
}
\numberwithin{equation}{subsection}
\newcommand{\norm}[1]{\left\lVert#1\right\rVert}
\newtheorem{theorem}{Theorem}[section]
\newtheorem{corollary}[theorem]{Corollary}
\newtheorem{lemma}[theorem]{Lemma}
\newtheorem{proposition}[theorem]{Proposition}
\newtheorem{definition}[theorem]{Definition}
\newtheorem{remark}[theorem]{Remark}
\theoremstyle{definition}
\DeclareMathOperator{\I}{Im}
\DeclareMathOperator{\Ra}{Range}
\newcommand{\cgc}[1]{\color{red}  {\tt [GC: #1]} \color{black}  }
\title{Scattering Theory and dispersive estimates for general $1$d Charge Transfer Models}
\author[G. Chen]{Gong Chen$\textsuperscript{\textdagger}$}
\author[A. Moutinho]{Abdon Moutinho$^{\ast}$}
\email{gc@math.gatech.edu}
\email{aneto8@gatech.edu}
\address{School of Mathematics, Georgia Institute of Technology, Atlanta, GA 30332, USA}
\thanks{$^\ast$Corresponding author; e-mail: aneto8@gatech.edu.}
\thanks{\textsuperscript{\textdagger}GC  was partially supported by NSF grant DMS-2350301 and by Simons foundation MP-TSM00002258.}
\date{\today}
\begin{document}
\begin{abstract}
We continue our study of scattering theory and dispersive properties for one-dimensional charge transfer models, namely linear Schr\"odinger equations with multiple moving potentials. By the discovery of a refined structure of  the construction of distorted Fourier transforms adapted to the multi-potential framework, we remove the large-velocity separation assumption imposed in \cite{dispa}. This work thus completes the full scattering theory and dispersive analysis for general one-dimensional charge transfer models. These dispersive estimates provide the foundation for analyzing asymptotic stability and collision phenomena for multi-solitons in a general setting.
\end{abstract}
\maketitle
\tableofcontents

\section{Introduction}
Motivated by the study of dynamics of multi-solitons for the one–dimensional nonlinear Schr\"odinger equation (NLS), fixed $m\in\mathbb{N}$, we consider  the following linear matrix Schr\"odinger equation with a time-dependent charge transfer Hamiltonian
\begin{equation}\label{p}\tag{CTM}
    \begin{aligned}
& i \partial_t \vec{\psi}+\left(\begin{array}{cc}
\partial_x^2 & 0 \\
0 & {-}\partial_x^2
\end{array}\right) \vec{\psi}+\sum_{j=1}^m V_j\left(t \right) \vec{\psi}=0 \text{,  $(t,x)\in\mathbb{R}\times\mathbb{R}$} \\
& \left.\vec{\psi}\right|_{t=0}=\vec{\psi}_0\in L^{2}_{x}(\mathbb{R},\mathbb{C}^{2}),
\end{aligned}
\end{equation}
where  the functions $V_j$ are matrix potentials of the form
$$
V_j(t)=\left(\begin{array}{cc}
U_j(x-v_jt-y_j) & -e^{i \theta_j(t, x)} W_j(x-v_jt-y_j) \\
e^{-i \theta_j(t, x)} W_j(x-v_jt-y_j) & -U_j(x-v_jt-y_j)
\end{array}\right),
$$
with 
$$\omega_j, \gamma_j \in \mathbb{R} \,\,\text{with }\,\omega_j > 0$$
\begin{equation}\label{eq:thetaj}
    \theta_j(t, x)=\left(\left|v_j\right|^2+\omega_j\right) t+2 x v_j+\gamma_j 
\end{equation}
and $v_j$ are distinct velocities listed as
\begin{equation}\label{eq:velocity}
    v_{1}>v_{2}>...>v_{m}.
\end{equation}
We assume that the real-valued functions $U_j(x)$ and $W_j(x)$ exhibit rapid decay, as is guaranteed by the decay of the underlying solitons in the nonlinear setting. Indeed, the system \eqref{p} arises as the  linearization of NLS around a multi–soliton background, with each center $j$ corresponding to one traveling soliton.

To the decay estimates of solutions of \eqref{p} is crucial to the study of the asymptotic stability of multi-solitons for Schr\"odinger models. This approach was taken in the papers \cite{Dispesti} and \cite{nlsstates}  in $d=3$ and in \cite{Perelman4} to prove the asymptotic stability of multi-solitons for the nonlinear Schr\"odinger equation $d\geq 3$.
For $d=1$, in \cite{perelmanasym}, Perelman proved the asymptotic stability of two fast solitons for a large set of one-dimensional nonlinear Schr\"odinger equations. Also see \cite{CJlinear,CJnonlinear,GCwave,GCwavecmp} in the settings of Klein-Gordon equations and wave equations respectively. 
We refer to, for instance,  \cite{Cai,dispa,Phaseanal,Dispesti,Geocharge,Yajimachannels,Zielinski1} for more details on the background and historical references on charge transfer models including their physical applications.

Focusing on the one-dimensional problems, in Perelman's work \cite{perelmanasym}, under the large-velocity separation assumption, for two  generic potentials with only $0$ as their discrete spectrum, the asymptotic completeness and dispersive estimates were obtained. Note that in \cite{perelmanasym}, many constructions are specific for $m=2$. In particular, one of the most important points  is to identify the scattering part of solutions via so-called the dispersive map.  The construction and analysis of the dispersive map in the two-potential setting  is much more simplified due to the symmetry. In \cite{dispa}, we revisited Perelman's work and we systematically developed the scattering theory and established dispersive estimates under the assumption that the potentials move at significantly different velocities, even \emph{in the presence of unstable modes}. In particular, we constructed the dispersive map, see Definition \ref{s0def}, in the general settings and obtained refined mapping properties of it. After detailed analysis of the dispersive map, we proved the existence of wave operators, asymptotic completeness, and pointwise decay of solutions, \emph{without requiring the absence
of threshold resonances}.

In this paper, we continue our study of one–dimensional charge–transfer models, extending our earlier work~\cite{dispa}. By refining the construction of distorted Fourier transforms adapted to the multi–potential setting, we uncover structural identities that yield the scattering theory and dispersive estimates \emph{without} the large–velocity separation assumed in~\cite{dispa}, thereby completing the scattering and dispersive analysis for general one–dimensional charge–transfer models. A key outcome is the invertibility of the dispersive map in several function spaces without the large speed–gap assumption. This is achieved by iterating suitable linear maps and analyzing the resulting structure carefully. Building on these properties of the dispersive map, we obtain canonical decompositions of solutions into a dispersive component and moving discrete modes. We emphasize that, in the absence of large–speed separation, exponential growth associated with distinct traveling unstable modes will cause complicated interaction among each other; nevertheless, we establish a version of asymptotic completeness and the existence of wave operators in the stable space, together with dispersive estimates in the scattering space. These bounds serve as cornerstones for the asymptotic stability of multi–solitons and for global–in–time analyses of low–speed soliton–soliton and soliton–potential collisions; see, for instance,~\cite{holmer2012phase,holmer2007soliton,holmer2007slow,holmer2008soliton,Collisionls,WSsimulation} and references therein.  We conclude our introduction by pointing out that removing the large-separation conditions on speeds not only completes  scattering  and dispersive theory for general charge transfer models, it also opens the door to study dispersive properties for multiple moving potential problems  and asymptotic stability of multi-solitons in relativistic models.

\subsection{Organization and notations}
Before moving on to the main results,  we briefly outline the structure of the article and introduce some of the notation used throughout.
\subsubsection{ Organization}
In the later part of this section, we introduce some basic notations and then state our main results including their extensions to the scalar models. In Section \ref{sec:linearmaps},  we study the precise structures of linear maps of two different forms, which will be crucial for us to study dispersive maps. The invertibility of the dispersive map and its dispersive properties are analyzed in Section \ref{sec:dispersivemap}. With these preparations, in Section \ref{sec:decomp}, we show the decompositions of functions in terms of dispersive maps and moving discrete modes. Then in Section \ref{sec:stabscat}, we show the decomposition of solutions in the stable space  and dispersive properties of solutions in the scattering space. Finally in Section \ref{prooftunst2}, special solutions are constructed for given scattering behaviors and moving discrete modes. In some sense, this is the existence of wave operators in the center-stable space.
\subsubsection{Notations}
Throughout this article, in various places, we use $\Diamond$ to denote dummy variables.

As usual, “$A := B$” or “$B =: A$” is the definition of $A$ by means of the expression
$B$.  

We use $\langle \Diamond \rangle:=\sqrt{1+\Diamond^2},$ $p=\begin{bmatrix}
    1 & 0\\
    0 & 0
\end{bmatrix},$ and $q=\begin{bmatrix}
    0 & 0\\
    0 & 1
\end{bmatrix}.$

$\chi_A$ for some set $A$ is always denoted as a smooth indicator function adapted to the set $A$. 

Throughout, we use $u_t=\partial_t u:=\frac{\partial}{\partial_t}u$ and $u_x=\partial_x u:=\frac{\partial}{\partial_x} u$.

For non-negative $X$, $Y$, we write $X\lesssim Y$ if $X \leq C$ ,
and we use the notation $X\ll Y$ to indicate that the implicit constant should be regarded as small.
Furthermore, for nonnegative $X$ and arbitrary $Y$, we use the shorthand notation $Y = \mathcal{O}(X)$ if
$|Y|\leq C X$. Moreover, for any Banach space $B,$ we say that a function $f\in L^{2}_{x}(\mathbb{R},B)$ satisfies $f=O_{L^{2}}(M)$ for a parameter $M>0,$ if there exists a positive constant $C$ such that $\norm{f}_{L^{2}_{x}(\mathbb{R},B)}\leq C M.$   

\noindent {\it Inner products.}
In terms of the $L^2$ inner product of complex-valued functions, we use
\begin{equation}\label{eq:L2inner}
    \langle f, g\rangle = \int_\mathbb{R} f\overline{g}\, dx.
\end{equation} 
Given two pairs of complex-valued vector functions $\vec{f} = (f_1, f_2)$ and $\vec{g} = (g_1, g_2)$, their  inner product is given by
\begin{equation}\label{eq:L2L2inner}
    \langle \vec{f}, \vec{g}\rangle:=  \int_\mathbb{R} \bigl( f_1 \overline{g_1} + f_2 \overline{g_2} \bigr) \, d x.
\end{equation}
For $n\in\mathbb{N}$, we define the space $\mathcal{F}L^1$ as
\begin{equation*}
    \mathcal{F}L^1\coloneqq \left\{f:\mathbb{R}\to\mathbb{C}\vert\, \norm{f}_{\mathcal{F}L^1}=\norm{\hat{f}(x)}_{L^{1}_{x}(\mathbb{R})}<{+}\infty\right\}.
\end{equation*}
 For any $k\in\mathbb{R},$ the integer part of $k$ is denoted by
 \begin{equation*}
     \lfloor k\rfloor=\max_{h\in\mathbb{Z},h\leq k\,}h.
 \end{equation*}
 Given any function $f$ and a real number $y$, we define the shift of it as
 \begin{equation}
     \tau_y f (\Diamond):= f(\Diamond-y).
 \end{equation}
\subsection{Main results}
In this subsection, we introduce assumptions on potentials, then state the main results in this paper after introducing basic notations from the scattering theory.

\subsubsection{Assumptions on potentials}\label{subsub:assumption}

Fix a $m\in\mathbb{N}$,  we denote
\begin{equation}
     [m]:=\{1,\,2\,,...\,,m\}.
\end{equation}
Motivated by nonlinear applications, we will consider the following potentials.  First, for each $\ell\in\{1,2,\,...,\,m\},$ we consider $U_{\ell},\,W_{\ell}:\mathbb{R}\to\mathbb{R}$ be fast decaying functions and, for $\omega_\ell>0,$ the following operators
\begin{align}\label{eq:H0V}
\mathcal{H}_{0,\omega_\ell}:=\left(\begin{array}{cc}
-\partial_{x x}+\omega_\ell & 0 \\
0 & \partial_{x x}-\omega_\ell
\end{array}\right), V_{\ell}(x)=\left(\begin{array}{cc}
U_{\ell} & -W_{\ell} \\
W_{\ell} & -U_{\ell}
\end{array}\right), \mathcal{H}_{\ell}:=\mathcal{H}_{0,\omega_{\ell}}+V_{\ell}, 
\end{align}
such that  $V_{\ell}$  well as all its derivatives are exponentially decay\footnote{This decay assumption could be weakened. We use the current decay assumption since we can directly refer to the existence of Jost functions in \cite{Busper1,KriegerSchlag}. Actually, the symmetric assumption is imposed for the same reason. Overall, so long as one can show the existence of Jost solutions which asymptotically behave like free waves at $\pm\infty$ depending on normalizations, our argument can always work.}: for some $0<\gamma<1$
\begin{equation}\label{decV}
\left\|V^{(k)}_{\ell}(x)\right\| \leq C_{k,\ell} e^{-\gamma|x|} \quad \forall k \geq 0,    
\end{equation}
and $V_{\ell}(x)=V_{\ell}({-}x)$ for all $x\in\mathbb{R},$ and $\ell\in\{1,2,\,...,\,N\}.$ 
Using
\begin{equation}\label{eq:sigmas}
\sigma_{3}=
    \begin{bmatrix}
     1     & 0\\
     0 & {-}1
    \end{bmatrix},\, \sigma_{2}=
    \begin{bmatrix}
     0     & 1\\
     {-}1 & 0
    \end{bmatrix},\,
\sigma_{1}= \begin{bmatrix}
     0     & 1\\
     1 & 0
    \end{bmatrix},
\end{equation}    
 it is not difficult to verify the following identities
$$
\sigma_3 \mathcal{H}^{*}_{\ell} \sigma_3=\mathcal{H}_{\ell}, \sigma_1 \mathcal{H}_{\ell} \sigma_1=-\mathcal{H}_{\ell}.
$$
\par Next, we consider the following hypotheses for our main results on the asymptotic completeness.
\begin{itemize}
    \item [(H1)] There is no embedded eigenvalue in the essential spectrum of each operator $\mathcal{H}_{\ell}.$ Recall that the essential spectrum of $\mathcal{H}_{\ell}$ is given by $\sigma_{e}\mathcal{H}_{\ell}= ({-}\infty,-\omega_\ell]\cup [\omega_\ell,{+}\infty).$
    \item [(H2)] Each operator $\mathcal{H}_{\ell}$ has $2N_\ell+2M_\ell$ non-zero simple eigenvalues for some $N_{\ell},\,M_{\ell}\in\mathbb{Z}_{\geq 0}.$  
    There are $2N_{\ell}$  non-zero simple eigenvalues on the real line with absolute value less than $\omega_\ell$, and $2M_{\ell}$  non-zero simple eigenvalues on the imaginary line.\footnote{The simplicity of eigenvalues are motivated by nonlinear applications. Our proof does not depend on this simplicity assumption and actually can be easily adapted to the setting with finite dimensional eigenspaces.}
    \item [(H3)] Finally,  we also assume that $0$ is  an eigenvalue of $\mathcal{H}_\ell$, $0\in\sigma_{d}\mathcal{H}_{\ell}$ and $\ker \mathcal{H}_{\ell}^{n}=\ker\mathcal{H}_{\ell}^{2}$ for all  $\ell,$  and every natural number  $n\geq 2.$ 
   
\end{itemize}
The three conditions above will be  standing assumptions throughout this paper, 
and we will not mention them further.

\subsubsection{Notations from scattering theory}
For notional convenience, we list the \emph{non-zero} eigenvalues of $\mathcal{H}_\ell$ as $\{\lambda_{j,\ell}\}_{j=1}^{2N_\ell+2M_\ell}$, and the discrete eigenvalues are given by
\begin{equation}
    \sigma_d\mathcal{H}_\ell=\{0\}\bigcup\{\lambda_{j,\ell}\}_{j=1}^{2N_\ell+2M_\ell}.
\end{equation}
For each $\lambda_{j,\ell}$, by the simplicity assumption, $\dim (\ker (\mathcal{H}_\ell-\lambda_{j,\ell}\mathrm{Id}))=1$ and there is a normalized eigenfunction $\vec{Z}_{j,\ell}$ such that
\begin{equation}\label{eq:nonzerodiscretemodes}
    \mathcal{H}_\ell \vec{Z}_{j,\ell}= \lambda_{j,\ell} \vec{Z}_{j,\ell}.
\end{equation}
For the generalized kernel of $\mathcal{H}_\ell$, we assume that $\dim (\ker \mathcal{H}_\ell)=K_{\ell,1}$ and  $\dim (\ker \mathcal{H}^2_\ell)=K_{\ell,2}$ with $K_{\ell,2}\geq K_{\ell,1}\geq 1$. Moreover, we can find the following sets as the bases for  $\ker \mathcal{H}_\ell$ and $\ker \mathcal{H}^2_\ell$
\begin{equation}\label{eq:generalizedkernel}
 \ker \mathcal{H}_\ell=\mathrm{span}\{\vec{Z}^0_{j,\ell}\}_{j=1}^{K_{\ell,1}},\quad\ker \mathcal{H}^2_\ell=\mathrm{span}\{\vec{Z}^1_{j,\ell}\}_{j=1}^{K_{\ell,2}}.     
\end{equation}
We also set that for $1\leq j\leq K_{\ell,1}$,
\begin{equation}\label{eq:generalizedkernel1}
    \vec{Z}^0_{j,\ell}=\vec{Z}^1_{j,\ell}.
\end{equation}
We also record the notations:
\begin{equation}\label{Pd}\tag{Discrete spectrum space}
P_{d,\ell}=
\text{ projection onto the discrete spectrum of $\mathcal{H}_{\ell},$}
\end{equation}
\begin{equation}\label{Pe}\tag{Essential spectrum space}
P_{e,\ell}=
\text{ projection onto the essential spectrum of $\mathcal{H}_{\ell}.$}
\end{equation}
For the essential spectrum part, the condition $(\mathrm{H}1)$ imply the existence of the following generalized eigenfunctions $\mathcal{F}_{\ell}(x,k),\,\mathcal{G}_{\ell}(x,k)$ of $\mathcal{H}_{\ell}$ in $L^{\infty}_{x}(\mathbb{R})$ satisfying the following asymptotic behavior for some $\gamma>0.$
\begin{align}\label{asy1}
    \mathcal{G}_\ell(x,{-}k)=&\overline{s_\ell(k)}\left[e^{ikx}
    \begin{bmatrix}
        1\\
        0
    \end{bmatrix}+O\left(\frac{e^{\gamma x}}{(1+\vert k \vert)}\right)\right] \text{, as $x\to {-}\infty,$}\\ \label{asy2}
    \mathcal{G}_\ell(x,{-}k)=&e^{ikx}
    \begin{bmatrix}
        1\\
        0
    \end{bmatrix}+\overline{r_\ell(k)}e^{{-}ikx}
    \begin{bmatrix}
        1\\
        0
    \end{bmatrix}+O\left(\frac{e^{{-}\gamma x}}{(1+\vert k \vert)}\right)
    \text{, as $x\to {+}\infty,$}\\ \label{asy3}
    \mathcal{F}_\ell(x,k)=& s_\ell(k)\left[e^{ikx}
    \begin{bmatrix}
        1\\
        0
    \end{bmatrix}+O\left(\frac{e^{{-}\gamma x}}{(1+\vert k \vert)}\right)\right] \text{, as $x\to {+}\infty,$}\\ \label{asy4}
    \mathcal{F}_\ell(x,k)=&e^{ikx}
    \begin{bmatrix}
        1\\
        0
    \end{bmatrix}+r_\ell(k)e^{{-}ikx}
    \begin{bmatrix}
        1\\
        0
    \end{bmatrix}+O\left(\frac{e^{\gamma x}}{(1+\vert k \vert)}\right)
    \text{, as $x\to {-}\infty.$}
\end{align}
Based on Lemmas $4.13$ and $5.13$ of \cite{dispa}, we record the following important estimates.
\begin{lemma}\label{lem:reftranscoeff}For $r_{\ell}(k),s_{\ell}(k):\mathbb{R}\to \mathbb{C}$, one has for any $n\in\{0,1\}$
\begin{equation}\label{decayrs1}
  \left\vert \frac{d^{n}r_{\ell}(k)}{dk^{n}}\right\vert +\left\vert \frac{d^{n}}{dk^{n}}[1-s_{\ell}(k)] \right\vert=O\left(\frac{1}{(1+\vert k\vert)^{n+1}}\right).
\end{equation}
\end{lemma}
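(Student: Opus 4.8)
The plan is to derive the decay estimates \eqref{decayrs1} directly from the construction of the Jost solutions encoded in the asymptotics \eqref{asy1}--\eqref{asy4}, exactly as in Lemmas 4.13 and 5.13 of \cite{dispa}, the statement being a verbatim compilation of those results. First I would recall the standard Volterra integral equation setup: writing $\mathcal{F}_\ell(x,k)=e^{ikx}\bigl(\begin{smallmatrix}1\\0\end{smallmatrix}\bigr)+m_\ell(x,k)$, the function $m_\ell$ solves a Volterra equation whose kernel is built from the free resolvent of $\mathcal{H}_{0,\omega_\ell}$ and the potential $V_\ell$; because $V_\ell$ and all its derivatives decay exponentially by \eqref{decV}, this equation has a unique bounded solution, and the standard Neumann-series/iteration estimate gives $|m_\ell(x,k)|\lesssim (1+|k|)^{-1}e^{-\gamma x}$ on $x\gtrsim 0$ (with the analogous bound toward $-\infty$ for the other Jost solution). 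The extra decay in $k$ comes from integrating the oscillatory factor $e^{ik(x-y)}$ against the exponentially localized potential — a single integration by parts in $y$ produces the gain of one power of $(1+|k|)^{-1}$.

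Next, to extract $r_\ell$ and $s_\ell$ I would match the two asymptotic expansions of $\mathcal{F}_\ell$ (resp.\ $\mathcal{G}_\ell$) through the Wronskian-type pairings of the Jost solutions: $s_\ell(k)^{-1}$ and $r_\ell(k)/s_\ell(k)$ are given by explicit bilinear expressions in the Jost solutions and $V_\ell$, of the form $\int_\mathbb{R} e^{\pm iky}\,(\text{entries of }V_\ell)\,m_\ell(y,k)\,dy$ plus the constant coming from the leading term. From the bound on $m_\ell$ just obtained, together with exponential decay of $V_\ell$, one gets $|1-s_\ell(k)^{-1}|\lesssim (1+|k|)^{-1}$ and $|r_\ell(k)|\lesssim (1+|k|)^{-1}$; since $s_\ell$ is bounded and bounded away from $0$ on compact sets and tends to $1$ at infinity (no spectral singularity under (H1)), one can invert to obtain $|1-s_\ell(k)|\lesssim(1+|k|)^{-1}$ and hence \eqref{decayrs1} for $n=0$.

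For $n=1$ I would differentiate the Volterra equation in $k$: $\partial_k m_\ell$ solves the same Volterra equation with an inhomogeneous term that is $O((1+|k|)^{-1}e^{-\gamma x})$ after the same integration-by-parts gain (differentiating in $k$ brings down a factor $x-y$, which is harmless against the exponential weight but costs, a priori, one fewer power of $|k|$; integrating by parts once restores it). Running the same contraction argument gives $|\partial_k m_\ell(x,k)|\lesssim (1+|k|)^{-2}e^{-\gamma x}$, and differentiating the bilinear formulas for $r_\ell$, $s_\ell$ then yields the $n=1$ bounds in \eqref{decayrs1}. I expect the only delicate point — and the place where one must be careful rather than merely routine — is bookkeeping the oscillatory integration by parts uniformly in both $x$ and $k$, in particular checking that the boundary terms at $y=x$ and $y=\pm\infty$ are controlled and that the $k$-derivative does not lose more than the one power that is subsequently recovered; the matrix (as opposed to scalar) structure and the $\omega_\ell$-shift in $\mathcal{H}_{0,\omega_\ell}$ affect only the precise form of the free resolvent kernel and not the mechanism, so the whole argument reduces to citing the corresponding computation in \cite{dispa}.
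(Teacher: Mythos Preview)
Your proposal is correct and aligns with the paper's treatment: the paper does not give an independent proof here but simply records the estimates as consequences of Lemmas~4.13 and~5.13 of \cite{dispa}, which are precisely the Volterra/Jost-solution computations you outline. Your sketch of the standard Jost-function construction and the integration-by-parts mechanism for the $k$-decay is exactly the content of those cited lemmas, so there is nothing to add.
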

Using the generalized eigenfunctions above, we define the following linear maps 
\begin{align}\label{Ghat}
\hat{G}_{\ell}\left(\begin{bmatrix}
    f_{1}(k)\\
       f_{2}(k)
    \end{bmatrix}\right)(x)\coloneqq \int_{\mathbb{R}}\mathcal{G}_{\ell}(x,k)\frac{f_{1}(k)}{s_{\ell}(k)}\,dk+\int_{\mathbb{R}}\sigma_{3}\mathcal{G}_{\ell}(x,k)\frac{f_{2}(k)}{s_{\ell}(k)}\,dk\in L^{2}_{x}(\mathbb{R},\mathbb{C}^{2})\\ \label{Fhat}
\hat{F}_{\ell}\left(\begin{bmatrix}
g_{1}(k)\\
        g_{2}(k)
    \end{bmatrix}\right)(x)\coloneqq \int_{\mathbb{R}}\mathcal{F}_{\ell}(x,k)\frac{g_{1}(k)}{s_{\ell}(k)}\,dk+\int_{\mathbb{R}}\sigma_{3}\mathcal{F}_{\ell}(x,k)\frac{g_{2}(k)}{s_{\ell}(k)}\,dk \in L^{2}_{x}(\mathbb{R},\mathbb{C}^{2}),
\end{align}
where  $(f_{1},f_{2}),\, (g_{1},g_{2})\in L^{2}_{k}(\mathbb{R},\mathbb{C}^{2}).$ We recall the basic properties of these maps which are proved in \cite{dispa}.
\begin{lemma}
 The domains of  $\hat{G}_{\ell}$ and $\hat{F}_{\ell}$ which are dense in $L^{2}_{k}(\mathbb{R},\mathbb{C}^{2})$. We also have
 \begin{equation}
P_{e,\ell}\mathcal{H}_{\ell}=\Ra \hat{G}_{\ell}=\Ra \hat{F}_{\ell}.
 \end{equation}
\end{lemma}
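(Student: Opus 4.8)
Since the statement merely recollects facts from \cite{dispa}, the most economical proof is to quote the distorted Fourier inversion and the spectral representation of $\mathcal{H}_\ell$ established there and assemble them; below I indicate how these ingredients yield each of the two claims. I begin with the density of the domains. The natural domain of $\hat G_\ell$ (and of $\hat F_\ell$) is the set of symbols $(f_1,f_2)\in L^2_k(\mathbb{R},\mathbb{C}^2)$ for which the oscillatory integrals in \eqref{Ghat} (resp.\ \eqref{Fhat}) converge in $L^2_x(\mathbb{R},\mathbb{C}^2)$. By Lemma \ref{lem:reftranscoeff}, $1-s_\ell$ and its first derivative decay like $\langle k\rangle^{-1}$, so $1/s_\ell$ is $C^1$ and bounded on every region $\{|k|\geq\delta\}$ and tends to $1$ at infinity; the only possible obstruction is a zero of $s_\ell$ at the threshold $k=0$ (a threshold resonance), and that is immaterial for density since $C^\infty_c(\mathbb{R}\setminus\{0\})\times C^\infty_c(\mathbb{R}\setminus\{0\})$ already lies in both domains — for such symbols one substitutes the asymptotics \eqref{asy1}--\eqref{asy4} and uses the exponential decay \eqref{decV} of $V_\ell$ to control the error terms, reducing convergence to standard oscillatory-integral bounds for the free profiles — and this subspace is dense in $L^2_k(\mathbb{R},\mathbb{C}^2)$.

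For the ranges I would prove $\Ra\hat G_\ell\subseteq P_{e,\ell}\mathcal{H}_\ell$ and its reverse, and likewise for $\hat F_\ell$. The forward inclusion rests on the fact that $\mathcal{G}_\ell(\cdot,k)$ and $\mathcal{F}_\ell(\cdot,k)$ solve the eigenvalue equation for $\mathcal{H}_\ell$ at $k^2+\omega_\ell$, while the terms carrying $\sigma_3$ account, through $\sigma_3\mathcal{H}_\ell^*\sigma_3=\mathcal{H}_\ell$ and $\sigma_1\mathcal{H}_\ell\sigma_1=-\mathcal{H}_\ell$, for the branch $-(k^2+\omega_\ell)$ of $\sigma_e\mathcal{H}_\ell=(-\infty,-\omega_\ell]\cup[\omega_\ell,+\infty)$; pairing $\hat G_\ell(\vec f)$ against the biorthogonal generalized discrete eigenfunctions and exploiting the separation between $\sigma_e\mathcal{H}_\ell$ and $\sigma_d\mathcal{H}_\ell$ then forces $P_{d,\ell}\hat G_\ell(\vec f)=0$, i.e.\ $\hat G_\ell(\vec f)\in P_{e,\ell}\mathcal{H}_\ell$. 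The reverse inclusion — the completeness, or Plancherel, statement for the distorted Fourier transform — I would obtain by combining Stone's formula $P_{e,\ell}=\frac{1}{2\pi i}\int_{\sigma_e\mathcal{H}_\ell}\big[R_{\mathcal{H}_\ell}(\mu+i0)-R_{\mathcal{H}_\ell}(\mu-i0)\big]\,d\mu$, where $R_{\mathcal{H}_\ell}(z)=(\mathcal{H}_\ell-z)^{-1}$, with the Wronskian representation of the resolvent kernel on $\sigma_e\mathcal{H}_\ell$ in terms of $\mathcal{F}_\ell$, $\mathcal{G}_\ell$, and $s_\ell$; after the change of variables $\mu=\pm(k^2+\omega_\ell)$ this yields a reconstruction identity $\vec\psi=\hat G_\ell(G_\ell\vec\psi)=\hat F_\ell(F_\ell\vec\psi)$ for every $\vec\psi\in P_{e,\ell}\mathcal{H}_\ell$, with $G_\ell,F_\ell$ the associated forward transforms, so $\vec\psi\in\Ra\hat G_\ell\cap\Ra\hat F_\ell$. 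Assembling the inclusions gives $P_{e,\ell}\mathcal{H}_\ell=\Ra\hat G_\ell=\Ra\hat F_\ell$.

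I expect the real obstacle to be the behaviour at the threshold $k=0$: when $s_\ell(0)=0$ one must justify the contour deformation in Stone's formula, the ensuing Plancherel identity, and the cancellation of the singularity of $1/s_\ell$ against the resonant behaviour of $\mathcal{G}_\ell,\mathcal{F}_\ell$ there — exactly the place where (H1)--(H3) and the non-self-adjoint spectral bookkeeping (Riesz projections onto the discrete spectrum, biorthogonality with the adjoint eigenfunctions, absence of embedded eigenvalues) come in. All of this is carried out in \cite{dispa} — in the analysis underlying its Lemmas 4.13 and 5.13 quoted above — so the write-up would simply collect those statements; no ingredient beyond \cite{dispa} is needed.
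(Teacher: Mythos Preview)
Your proposal is correct and outlines precisely the standard distorted-Fourier-transform machinery one would invoke; the paper itself gives no proof at all for this lemma, simply stating that these properties ``are proved in \cite{dispa}''. Your sketch is therefore more detailed than what the paper provides, and the approach you describe (density via $C^\infty_c(\mathbb{R}\setminus\{0\})$ symbols, range inclusion via the eigenvalue equation and biorthogonality, completeness via Stone's formula and the Wronskian resolvent kernel) is exactly the route taken in the cited reference.
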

Next, we define the distorted Fourier operators for each operator $\mathcal{H}_{\ell}.$

\begin{definition}
Let $\sigma_{1}$ be the matrix
\begin{equation*}
    \sigma_{1}=\begin{bmatrix}
        0 & 1\\
        1 & 0
    \end{bmatrix}.
\end{equation*}
The operators $G^{*}_{\ell},\,F^{*}_{\ell}:L^{2}_{x}(\mathbb{R},\mathbb{C}^{2})\to L^{2}_{k}(\mathbb{R},\mathbb{C}^{2})$ are defined for each $\ell\in [m]$ by
\begin{align*}
   F^{*}_{\ell}(\vec{u})(k)\coloneqq & \frac{1}{\sqrt{2\pi}}\int_{\mathbb{R}} \begin{bmatrix}
        \mathcal{F}^{t}_{\ell}(x,{-}k)\\
        \left(\sigma_{1}\mathcal{F}_{\ell}(x,{-}k)\right)^{t}
    \end{bmatrix}\overrightarrow{u}(x)\,dx,\\
    G^{*}_{\ell}(\vec{u})(k)\coloneqq & \frac{1}{\sqrt{2\pi}}\int_{\mathbb{R}} \begin{bmatrix}
        \mathcal{G}^{t}_{\ell}(x,{-}k)\\
        \left(\sigma_{1}\mathcal{G}_{\ell}(x,{-}k)\right)^{t}
    \end{bmatrix}\overrightarrow{u}(x)\,dx,
\end{align*}
for any function $\vec{u}\in L^{2}_{x}(\mathbb{R},\mathbb{C}^{2}).$ The function
\begin{equation*}
    \begin{bmatrix}
        \mathcal{F}^{t}_{\ell}(x,{-}k)\\
        \left(\sigma_{1}\mathcal{F}_{\ell}(x,{-}k)\right)^{t}
    \end{bmatrix}\overrightarrow{u}(x)
\end{equation*}
is a matrix product of a element in $\mathbb{C}^{2\times 2}$ with the vector $\overrightarrow{u}(x) \in \mathbb{C}^{2}.$ 
\end{definition}
Moreover, the operators $F^{*}_{\ell}$ and $G^{*}_{\ell}$ satisfy the following proposition which is an analogue version of the inverse Fourier transform theorem.
\begin{lemma}\label{leper}
We have the following identities in the frequency side
\begin{align*}
\sigma_{3}F^{*}_{\omega}\sigma_{3}\hat{G}_{\omega}=&\mathrm{Id},\\
\sigma_{3}G^{*}_{\omega}\sigma_{3}\hat{F}_{\omega}=&\mathrm{Id}.
\end{align*}
Moreover, if $\lambda\in\sigma_{d}\mathcal{H}_{\omega}$ and $\vec{v}(x)\in L^{2}_{x}(\mathbb{R},\mathbb{C}^{2})\cap \ker \left[\mathcal{H}_{\omega}-\lambda \mathrm{Id}\right]^{2},$ then
\begin{equation}\label{Fnull}
F^{*}_{\omega}\sigma_{3}\vec{v}=G^{*}_{\omega}\sigma_{3}\vec{v}=0.
\end{equation}
\end{lemma}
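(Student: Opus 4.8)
The plan is to obtain both parts of Lemma~\ref{leper} from one Lagrange (Wronskian) identity for $\mathcal{H}_\omega$. Writing $\mathcal{H}_{0,\omega}=\sigma_3(-\partial_{xx}+\omega)$ and using that $V_\omega$ has real entries (so that $\mathcal{H}_\omega^{t}=\mathcal{H}_\omega^{*}=\sigma_3\mathcal{H}_\omega\sigma_3$), a direct computation gives, for smooth $\mathbb{C}^{2}$-valued $u,w$,
\begin{equation*}
\partial_x\bigl(w^{t}\sigma_3 u_x-w_x^{t}\sigma_3 u\bigr)=w^{t}\mathcal{H}_\omega u-(\mathcal{H}_\omega^{t}w)^{t}u,
\end{equation*}
the zeroth-order terms cancelling; hence if $\mathcal{H}_\omega u=\mu u$ and $\mathcal{H}_\omega^{t}w=\mu' w$ then
\begin{equation*}
(\mu-\mu')\int_{\mathbb{R}}w^{t}u\,dx=\bigl[w^{t}\sigma_3 u_x-w_x^{t}\sigma_3 u\bigr]_{-\infty}^{+\infty}.
\end{equation*}
Together with $\sigma_1\mathcal{H}_\omega\sigma_1=-\mathcal{H}_\omega$, this keeps track of all the relevant objects: $\mathcal{F}_\omega(\cdot,\pm k)$, $\mathcal{G}_\omega(\cdot,\pm k)$ solve $\mathcal{H}_\omega(\cdot)=(k^{2}+\omega)(\cdot)$ and their $\sigma_1$-images solve $\mathcal{H}_\omega(\cdot)=-(k^{2}+\omega)(\cdot)$, while $\sigma_3$, resp.\ $\sigma_2=\sigma_3\sigma_1$, conjugates $\ker(\mathcal{H}_\omega-\lambda\,\mathrm{Id})$ into $\ker(\mathcal{H}_\omega^{t}-\lambda\,\mathrm{Id})$, resp.\ $\ker(\mathcal{H}_\omega^{t}+\lambda\,\mathrm{Id})$, and sends bounded generalized eigenfunctions of $\mathcal{H}_\omega$ to those of $\mathcal{H}_\omega^{t}$ with the same eigenvalue. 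The two assertions are precisely the decaying and the oscillatory regimes of the boundary term.

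For \eqref{Fnull}, fix $\lambda\in\sigma_d\mathcal{H}_\omega$ and first take $\vec v\in\ker(\mathcal{H}_\omega-\lambda\,\mathrm{Id})\cap L^{2}$; by (H1)--(H3), $\lambda\notin\sigma_e\mathcal{H}_\omega$, so the constant-coefficient ODE at $\pm\infty$ forces $\vec v$ and all its derivatives to decay exponentially. Apply the identity with $u=\mathcal{F}_\omega(\cdot,-k)$ (so $\mu=k^{2}+\omega$) and $w=\sigma_3\vec v\in\ker(\mathcal{H}_\omega^{t}-\lambda\,\mathrm{Id})$ (so $\mu'=\lambda$): the boundary term vanishes because $w,w_x\to0$ while $u,u_x$ stay bounded, so $(k^{2}+\omega-\lambda)\int_{\mathbb{R}}\mathcal{F}_\omega^{t}(x,-k)\sigma_3\vec v\,dx=0$. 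Since $\lambda\in\sigma_d\mathcal{H}_\omega\subset\bigl((-\omega,\omega)\cup i\mathbb{R}\bigr)$ and $k^{2}+\omega\ge\omega$, the factor $k^{2}+\omega-\lambda$ is nonzero for every real $k$, so the first row of $F^{*}_\omega\sigma_3\vec v$ vanishes; the second row is $-\tfrac1{\sqrt{2\pi}}\int\mathcal{F}_\omega^{t}(x,-k)\sigma_2\vec v\,dx$ (as $\sigma_1^{t}=\sigma_1$ and $\sigma_1\sigma_3=-\sigma_2$), and the same computation with $w=\sigma_2\vec v\in\ker(\mathcal{H}_\omega^{t}+\lambda\,\mathrm{Id})$ gives $(k^{2}+\omega+\lambda)\int\mathcal{F}_\omega^{t}(x,-k)\sigma_2\vec v\,dx=0$ with $k^{2}+\omega+\lambda\ne0$, so it vanishes as well. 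Only boundedness of the generalized eigenfunctions was used, so the same works verbatim with $\mathcal{G}_\omega$ in place of $\mathcal{F}_\omega$, giving $G^{*}_\omega\sigma_3\vec v=0$. For $\vec v\in\ker(\mathcal{H}_\omega-\lambda\,\mathrm{Id})^{2}$, set $\vec v_0:=(\mathcal{H}_\omega-\lambda\,\mathrm{Id})\vec v\in\ker(\mathcal{H}_\omega-\lambda\,\mathrm{Id})$; then $\mathcal{H}_\omega^{t}(\sigma_3\vec v)=\lambda\sigma_3\vec v+\sigma_3\vec v_0$, and integrating the identity yields $(k^{2}+\omega-\lambda)\int\mathcal{F}_\omega^{t}(x,-k)\sigma_3\vec v\,dx=\int\mathcal{F}_\omega^{t}(x,-k)\sigma_3\vec v_0\,dx=0$ by the previous case, and similarly for $\sigma_2\vec v$ and for $G^{*}_\omega$. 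Taking $\lambda=0$ covers $\ker\mathcal{H}_\omega^{2}$, and an obvious induction along Jordan chains would handle longer ones.

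For the two inversion identities, the plan is to unfold the definitions and reduce to the biorthogonality of the generalized eigenfunctions. After interchanging the $x$- and $k$-integrations on the dense domain of $\hat{G}_\omega$, the operator $\sigma_3 F^{*}_\omega\sigma_3\hat{G}_\omega$ becomes an integral operator in $k$ whose $2\times2$ matrix kernel is assembled, up to signs coming from the outer $\sigma_3$'s, from overlap integrals of $\mathcal{F}_\omega(\cdot,-k)$, $\mathcal{G}_\omega(\cdot,k')$ and their $\sigma_1$- and $\sigma_3$-images. Each overlap is evaluated from \eqref{asy1}--\eqref{asy4} by splitting $\mathbb{R}$ into a compact part (which contributes an absolutely convergent, distributionally negligible term) and a far field, on which the integrand reduces to a finite combination of pure exponentials $e^{i(k\mp k')x}$ with integrals $2\pi\delta(k\mp k')$ and coefficients built from $r_\omega,s_\omega$. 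A careful bookkeeping then collapses the whole kernel to $\delta(k-k')\,\mathrm{Id}$: for the pairings it covers, the Lagrange identity turns the surviving boundary term into a multiple of $s_\omega(k)\delta(k-k')$, and the normalizing weight $1/s_\omega$ built into $\hat{G}_\omega$ is chosen precisely so that this multiple is $1$; the cross terms and the spurious $\delta(k+k')$ contributions drop out upon using that the surviving plane-wave directions $(1,0)^{t}$, $(0,1)^{t}$ annihilate each other under $\sigma_3$ and the reality relations $r_\omega(-k)=\overline{r_\omega(k)}$, $1-s_\omega(-k)=\overline{1-s_\omega(k)}$. This gives $\sigma_3 F^{*}_\omega\sigma_3\hat{G}_\omega=\mathrm{Id}$, and the second identity is identical with the roles of $\mathcal{F}_\omega$ and $\mathcal{G}_\omega$ interchanged; the regularity, decay, and non-vanishing of $s_\omega$ from Lemma~\ref{lem:reftranscoeff} and the exponential decay of $V_\omega$ are what legitimize the Fubini step and the distributional limits. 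Alternatively, these biorthogonality relations may be quoted from the distorted-Fourier analysis of \cite{dispa}.

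The part concerning \eqref{Fnull} is essentially immediate once the Lagrange identity is in place, since there the boundary terms vanish outright and only the elementary separation of $\sigma_d\mathcal{H}_\omega$ from $\pm[\omega,+\infty)$ is used. The main obstacle is the oscillatory computation behind the inversion identities: one must regularize the unbounded $x$-integrals (e.g.\ by smooth cutoffs and a limiting argument, or via the resolvent representation of $P_{e,\omega}$), check that the cross-branch overlaps leave no principal-value tails, and confirm that the matched-branch overlaps collapse to exactly $s_\omega(k)\delta(k-k')$ with no residual $\delta(k+k')$. All of this rests on the fine structure of the Jost data in \eqref{asy1}--\eqref{asy4}, the symmetries of the scattering coefficients, and the bounds of Lemma~\ref{lem:reftranscoeff}.
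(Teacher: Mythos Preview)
Your proposal is correct. The paper itself does not give a proof at all: it simply cites \cite{dispa}, so your treatment is strictly more detailed than what appears here. Your Wronskian/Lagrange argument for \eqref{Fnull} is the standard and correct approach; the only slip is a harmless sign in the displayed Lagrange identity (one actually gets $\partial_x(w^{t}\sigma_3 u_x-w_x^{t}\sigma_3 u)=(\mathcal{H}_\omega^{t}w)^{t}u-w^{t}\mathcal{H}_\omega u$, hence $(\mu'-\mu)\int w^{t}u\,dx$ equals the boundary term), which does not affect the conclusion. For the inversion identities your sketch via biorthogonality of the Jost systems is the right idea, and your closing remark that one may simply quote the distorted-Fourier analysis of \cite{dispa} is exactly what the paper does.
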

\begin{proof}
    The proof of Lemma \ref{leper} is explained in \cite{dispa}.
\end{proof}
\par With notations above,  now we introduce  the \emph{dispersive map} $\mathcal{S}$ defined in \cite{dispa}.
\begin{definition}\label{s0def}
Given $v_{1}>v_{2}>...>v_{m},\,\delta y_{\ell}=y_{\ell-1}-y_{\ell}\gg 1,$ for  any given
\begin{equation*}
\vec{\phi}(k)= 
    \begin{bmatrix}
     \phi_{1}(k)\\
     \phi_{2}(k)
    \end{bmatrix}\in L^{2} (\mathbb{R},\mathbb{C}^{2}),
\end{equation*}
we define the following formula
\begin{align}\label{eq:Sphi}
   \mathcal{S}(\vec{\phi})(t,x):= & \sum_{\ell=1}^{m}e^{i\left(\frac{v_{\ell}x}{2}-\frac{v_{\ell}^{2}t}{4}+\omega_{\ell}t+\gamma_{\ell}\right)\sigma_{3}}\hat{G}_{\omega_{\ell}}\left(
   e^{{-}it(k^{2}+\omega_{\ell})\sigma_{3}}e^{{-}i\gamma_{\ell}\sigma_{3}} \begin{bmatrix}
e^{iy_{\ell}k}\phi_{1,\ell}\left(k+\frac{v_{\ell}}{2}\right)\\
       e^{iy_{\ell}k}\phi_{2,\ell}\left(k-\frac{v_{\ell}}{2}\right)
    \end{bmatrix}\right)(x-y_{\ell}-v_{\ell}t)\\
    & {-}\frac{1}{\sqrt{2 \pi}}\int_{\mathbb{R}}e^{{-}it k^{2}\sigma_{3}}
    \begin{bmatrix}
       \varphi_{1}(k)\\
       \varphi_{2}(k)
    \end{bmatrix} e^{ikx}\,dk,\nonumber
\end{align}
where the sequence $\{\overrightarrow{\phi_{\ell}}\}_{\ell=1}^m$ and $\vec{\varphi}$ are constructed recursively from $\vec{\phi}(k)$ via the following conditions
\begin{enumerate}
    \item [a)] $\begin{bmatrix}
        \phi_{1,1}(k)\\
        \phi_{2,1}(k)
    \end{bmatrix}=\vec{\phi}(k); $
    \item [b)] for each $\ell\geq 1,$
    \begin{equation*}
      e^{{-}i\gamma_{\ell+1}\sigma_{3}} \begin{bmatrix}
           \phi_{1,\ell+1}(k)\\
           \phi_{2,\ell+1}(k)
       \end{bmatrix}
        =e^{{-}i\gamma_{\ell}\sigma_{3}}
        \begin{bmatrix}
        \frac{\phi_{1,\ell}\left(k\right)-r_{\ell}\left(k-\frac{v_{\ell}}{2}\right)e^{{-}i2y_{\ell}(k-\frac{v_{\ell+1}}{2})+iy_{\ell}(v_{\ell}-v_{1+\ell})}\phi_{1,\ell}\left({-}k+v_{\ell}\right)}{s_{\ell}\left(k-\frac{v_{\ell}}{2}\right)}\\
        \frac{\phi_{2,\ell}\left(k\right)-r_{\ell}\left(k+\frac{v_{\ell}}{2}\right)e^{{-}2iy_{\ell}(k+\frac{v_{\ell+1}}{2})+iy_{\ell}(v_{\ell+1}-v_{\ell})}\phi_{2,\ell}\left({-}k-v_{\ell}\right)}{s_{\ell}\left(k+\frac{v_{\ell}}{2}\right)}
    \end{bmatrix};
    \end{equation*}
    \item [c)] and
    \begin{equation*}
        \begin{bmatrix}
          \varphi_{1}(k)\\
          \varphi_{2}(k)
        \end{bmatrix}=\sum_{\ell=1}^{m-1}\begin{bmatrix}
           \phi_{1,\ell}(k)\\
           \phi_{2,\ell}(k)
       \end{bmatrix}.
    \end{equation*}

\end{enumerate} 
 For the convenience of notations, we use $S(t)$ to denote
\begin{equation}\label{eq:St}
    \mathcal{S}(t)\vec{\phi}:= \mathcal{S}(\vec{\phi})(t,x).
\end{equation}
\end{definition}

\subsubsection{Main results}
Our first main result is a decomposition in terms of $\mathcal{S}$ and discrete modes introduced in \eqref{eq:generalizedkernel} and \eqref{eq:nonzerodiscretemodes}. 
\begin{proposition}
\label{princ}
There exists $L>0$ depending on the potentials $V_{\ell}$ such that if$$\min_{\ell}y_{\ell}-y_{\ell+1}>L,$$ then $\Ra \mathcal{S}(\Diamond)(0,x)$ is a closed subspace of $L^{2}_{x}(\mathbb{R},\mathbb{C}^{2}),$ and every function $\vec{f}(x)\in L^{2}_{x}(\mathbb{R},\mathbb{C}^{2})$ has a unique representation of the form
\begin{align}\label{princ11}
   f(x)&=\mathcal{S}
(\vec{\phi})(t,x)+\sum_{\ell=1}^{m}\sum_{j=1}^{2N_\ell+2M_\ell}a_{j,\ell}e^{i\sigma_{3}\left(\frac{v_{\ell}x}{2}-\frac{v_{\ell}^{2}t}{4}+\omega_{\ell}t+\gamma_{\ell}\right)}\vec{Z}_{j,\ell}(x-v_{\ell}t-y_{\ell})\\
&
+\sum_{\ell=1}^{m}\sum_{j=1}^{K_{\ell,2}}a_{j,\ell}^{1}e^{i\sigma_{3}\left(\frac{v_{\ell}x}{2}-\frac{v_{\ell}^{2}t}{4}+\omega_{\ell}t+\gamma_{\ell}\right)}\vec{Z}^1_{j,\ell}(x-v_{\ell}t-y_{\ell})
\end{align}
for some $\vec{\phi}$ in the domain of $\mathcal{S}$, and some complex numbers $a_{j,\ell}$ and $a_{j,\ell}^1$.  
\end{proposition}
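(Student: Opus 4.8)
The plan is to establish the decomposition by a two-step strategy: first decouple the $m$ moving centers using the large-separation hypothesis, and then invoke the single-potential spectral theory at each center. I would begin by freezing time at $t=0$ (the general $t$ case follows by applying the Galilei/gauge transformations that intertwine $\mathcal S(t)$ with $\mathcal S(0)$), so that the claim reduces to showing that
\begin{equation*}
 L^{2}_{x}(\mathbb R,\mathbb C^{2}) \;=\; \Ra\,\mathcal S(\Diamond)(0,x)\;\oplus\;\bigoplus_{\ell=1}^{m}\Big(\tau_{y_\ell}\,\mathfrak g_\ell\,\big(\ker[\mathcal H_\ell-\lambda_{j,\ell}]\oplus\ker\mathcal H_\ell^{2}\big)\Big),
\end{equation*}
where $\mathfrak g_\ell$ denotes the gauge factor $e^{i\sigma_3(v_\ell x/2+\gamma_\ell)}$. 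The natural approach is to produce the coefficients $a_{j,\ell}, a^{1}_{j,\ell}$ by applying, at each center $\ell$, the adjoint distorted Fourier operators $F^{*}_{\omega_\ell},G^{*}_{\omega_\ell}$ and the discrete spectral projections $P_{d,\ell}$, after conjugating by the appropriate Galilei transformation that moves center $\ell$ to the origin with zero velocity; Lemma \ref{leper}, in particular the vanishing identity \eqref{Fnull}, guarantees that the distorted Fourier side only sees the essential-spectrum part, which is exactly what $\mathcal S$ is built to reproduce through the operators $\hat G_{\omega_\ell}$.

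The core of the argument is an iteration/Neumann-series scheme in the spirit of the construction of $\mathcal S$ itself (conditions (a)–(c) in Definition \ref{s0def}). Given $\vec f$, I would define $\vec\phi$ and the coefficients by a fixed-point problem: extract from $\vec f$, near center $1$, its essential-spectrum data via $F^{*}_{\omega_1}$ (producing $\vec\phi_1=\vec\phi$) and its discrete data via $P_{d,1}$ (producing $a_{j,1},a^{1}_{j,1}$); subtract the corresponding pieces; the remainder, transported to center $2$, is almost supported away from center $1$ because the Jost functions $\mathcal F_1,\mathcal G_1$ differ from free exponentials only by exponentially localized corrections (see \eqref{asy1}–\eqref{asy4}), and $\delta y_\ell\gg1$; repeat at centers $2,\dots,m$; the leftover that $\mathcal S$ cannot absorb is precisely the free term $\vec\varphi$ in part (c). The recursion relations in (b) are exactly the bookkeeping that matches the reflected/transmitted waves between consecutive centers, and Lemma \ref{lem:reftranscoeff} provides the decay of $r_\ell$ and $1-s_\ell$ needed to make the off-diagonal (center-to-center) interaction terms small, with smallness governed by a positive power of $e^{-\gamma\,\delta y_\ell}$. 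This yields a bounded linear map $\vec f\mapsto(\vec\phi,\{a_{j,\ell}\},\{a^{1}_{j,\ell}\})$ that is invertible provided $L$ is chosen large depending only on the $V_\ell$; invertibility simultaneously gives existence, uniqueness, and — since $\mathcal S(\Diamond)(0,x)$ is the image of a bounded operator with closed complement (the finite-dimensional span of moving modes) — the closedness of $\Ra\,\mathcal S(\Diamond)(0,x)$.

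More concretely, the steps in order are: (i) record the Galilei/gauge conjugation identities that reduce everything at center $\ell$ to the stationary operator $\mathcal H_\ell$, and note that $\mathcal S(0)$ restricted to the $\ell$-th summand is $\tau_{y_\ell}\mathfrak g_\ell \hat G_{\omega_\ell}$ acting on a shifted/rescaled frequency variable; (ii) on $L^{2}$ at a single center, decompose using $\mathrm{Id}=P_{e,\ell}+P_{d,\ell}$, use $\Ra\,\hat G_{\omega_\ell}=P_{e,\ell}\mathcal H_\ell$ together with $\sigma_3 F^{*}_{\omega_\ell}\sigma_3\hat G_{\omega_\ell}=\mathrm{Id}$ to invert the essential part, and use \eqref{Fnull} to see the discrete part is invisible to $F^{*}_{\omega_\ell}$ — this handles $m=1$ exactly; (iii) for $m\ge2$, set up the iteration across centers, bounding the interaction kernels by $C e^{-\gamma\,\delta y_\ell}\lesssim e^{-\gamma L}$ using \eqref{decV}, \eqref{asy1}–\eqref{asy4} and Lemma \ref{lem:reftranscoeff}; (iv) choose $L$ so the resulting operator is a small perturbation of the block-diagonal (decoupled) operator, hence invertible by Neumann series; (v) deduce uniqueness from injectivity and closedness of the range from boundedness of the inverse.

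The main obstacle I anticipate is step (iii)–(iv): controlling the cross-terms between distinct centers uniformly in the frequency variable $k$ (including low and high frequencies), since the weights $1/s_\ell(k)$ appear in $\hat G_{\omega_\ell}$ and the shifts $e^{iy_\ell k}$ oscillate — one must be careful that the "almost orthogonality" between centers is quantitative in $L^{2}_{k}$ and not merely formal, and that the finitely many discrete modes at each center (which are exponentially localized, hence genuinely separated when $\delta y_\ell\gg1$) remain linearly independent from the $\mathcal S$-range after the perturbation. This is precisely where the refined structure of the distorted Fourier transform and the recursion (b) do the heavy lifting, and I would expect to isolate this as a standalone lemma on invertibility of the associated linear map, to be proven in Section \ref{sec:dispersivemap}.
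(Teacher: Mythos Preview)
Your overall architecture --- reduce to $t=0$, set up a linear system for the unknown $\vec\phi$ and discrete coefficients via the distorted Fourier operators $F^{*}_{\omega_\ell},G^{*}_{\omega_\ell}$, then invert --- matches the paper. The gap is in steps (iii)--(iv): you claim the center-to-center interaction kernels are $O(e^{-\gamma\,\delta y_\ell})$, so that for $L$ large the system is a small perturbation of block-diagonal and a naive Neumann series converges. This is not true. When you carry out the reduction (as the paper does in \eqref{giant linear system 1}), the system takes the form $T(\vec g)=A(f)+O_{L^{2}}(e^{-\beta L}\|\vec g\|)$, where $T$ is the operator of \eqref{form1}. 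The off-diagonal entries of $T$ are $r_{\ell}(\cdot+\tfrac{v_\ell}{2})$ and $s_{\ell}(\cdot+\tfrac{v_\ell}{2})$, which have $L^\infty$ norm up to $1$ regardless of the spatial separation; only the \emph{residual} error gains $e^{-\beta L}$. So $\|\mathrm{Id}-T\|$ is not small, and the Neumann series you propose has no reason to converge from the hypothesis $\min_\ell(y_\ell-y_{\ell+1})>L$ alone. (In \cite{dispa} convergence was obtained from large \emph{velocity} separation, which makes the $r_\ell$ small at the relevant arguments; that is exactly the assumption this paper removes.)

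What the paper does instead is the content of Section \ref{sec:linearmaps}. Although $\|\mathrm{Id}-T\|$ is not small, Lemma \ref{lem:claimR} shows $\|(\mathrm{Id}-T)^{j(m-1)}\|\lesssim C(m)^{j}/(\lfloor\tfrac{j-3}{2}\rfloor!)^{2}$, so $\sum_n(\mathrm{Id}-T)^n$ converges for \emph{any} distinct velocities. The mechanism is structural: Proposition \ref{RRR} shows that after $m-1$ iterations every term carries at least one reflection coefficient, and Proposition \ref{mainprop} shows that further iteration produces products $\prod_\ell r_{j_\ell}(k+v_{\ell,i})$ whose shifts $v_{\ell,i}$ are separated by at least $\tfrac12\min_n(v_n-v_{n+1})$; Lemma \ref{claimfactorial} then bounds such products by a factorial reciprocal. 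This is the ``refined structure'' alluded to in the introduction, and it is what you would need to supply in place of your smallness claim. Your final paragraph correctly anticipates that a standalone invertibility lemma is needed, but the source of invertibility is this factorial gain from accumulating separated reflection shifts, not exponential decay in $\delta y_\ell$.
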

\begin{remark}
The decomposition above  can be thought as  the  extension of  Theorem $1.13$ from \cite{dispa} for all cases having $\min_{\ell}v_{\ell}-v_{\ell+1}>0,$ including the ones satisfying $\vert \min_{\ell}v_{\ell}-v_{\ell+1} \vert\ll 1.$ In contrast to the problem studied in \cite{dispa}, in the current setting, we can not use the large separation conditions for velocities to defeat the exponential instability of unstable modes, so we could not obtain a decomposition like Theorem $1.13$ from \cite{dispa} in terms of solutions to \eqref{p}.
\end{remark}

\noindent {\bf Stable models:}
Without the large-speed separation condition, the exponential instability from different unstable modes will mess up with each other, whence, to study asymptotic completeness,  it is natural to consider stable models. In the absence of unstable modes, we still have a similar result to  Theorem $1.13$ from \cite{dispa}.

\begin{theorem}[Asymptotic completeness for stable models]\label{stablemodel}
In addition to hypotheses $(\mathrm{H}1),\,(\mathrm{H}2)$ and $(\mathrm{H}3),$  assume  that
\begin{equation}\label{eq:stablecond}
    M_\ell=0\,\,\text{in}\,\,(\mathrm{H}2),
\end{equation}
then there exists $L>0$  depending on the potentials $V_{\ell}$ such that
$$\min_{\ell}y_{\ell}-y_{\ell+1}>L>1,$$  every solution $\vec{\psi}$ of \eqref{p} has a unique representation of the form
\begin{align}\label{princ11stablemodel}
   \vec{\psi}(t,x)&=\mathcal{T}
\left(\overrightarrow{\phi_{0}}\right)(t,x) +\sum_{\ell=1}^{m}\sum_{j=1}^{2N_\ell+2M_\ell}a_{j,\ell}\mathfrak{G}_{\ell}(\vec{Z}_{j,\ell})(t,x)
+\sum_{\ell=1}^{m}\sum_{j=1}^{K_{\ell,2}}a_{j,\ell}^{1}\mathfrak{G}_{\ell}(\vec{Z}^1_{j,\ell})(t,x)
\end{align}
for some $\vec{\phi_0}$ in the domain of $\mathcal{S}$, and some complex numbers $a_{j,\ell}$ and $a_{j,\ell}^1$. In the decomposition above,
\begin{itemize}
    \item $\mathcal{T}
\left(\overrightarrow{\phi_{0}}\right)(t,x)$ is the unique solution of \eqref{p} satisfying
\begin{equation}\label{tphidecayy}
\lim_{t\to{+}\infty}\norm{\mathcal{T}
\left(\overrightarrow{\phi_{0}}\right)(t,x)-\mathcal{S}
\left(\overrightarrow{\phi_{0}}\right)(t,x)}_{L^{2}_{x}(\mathbb{R})}=0;
\end{equation}
\item The function $\mathfrak{G}_{\ell}(\vec{Z}_{j,\ell})(t,x)$ is the unique solution of \eqref{p} satisfying
\begin{equation}\label{tg1}
    \lim_{t\to{+}\infty}\norm{\mathfrak{G}_{\ell}(\vec{Z}_{j,\ell})(t,x)-e^{{-}i(\lambda_{j,\ell}-\omega_{\ell})t}e^{i\theta_{j}(t,x)\sigma_{3}}\vec{Z}_{j,\ell}(x-v_{\ell}t-y_{\ell})}_{L^{2}_{x}(\mathbb{R})}=0
\end{equation}
where $\theta_j$ is given by \eqref{eq:thetaj};
\item The function $\mathfrak{G}_{\ell}(\vec{Z}^1_{j,\ell})(t,x)$ is the unique solution of \eqref{p} satisfying
\begin{equation}\label{tg2}
\lim_{t\to{+}\infty}\norm{\mathfrak{G}_{\ell}(\vec{Z}^{1}_{j,\lambda_{\ell}})(t,x)-e^{i\theta_{j}(t,x)\sigma_{3}}\vec{Z}^1_{j,\ell}(x-v_{\ell}t-y_{\ell})-te^{i\theta_{j}(t,x)\sigma_{3}}\vec{Y}_{j,\ell}(x-v_{\ell}t-y_{\ell})}_{L^{2}_{x}(\mathbb{R})}=0,
\end{equation}
where $\mathcal{H}_{\ell}Z^1_{j,\ell}=Y_{j,\ell}.$
\end{itemize}
Moreover, for all $t\geq 0,$ there exists a projection $$P_{c}(t):L^{2}_{x}(\mathbb{R},\mathbb{C}^{2})\to \Ra \mathcal{T}(t)\subset L^{2}_{x}(\mathbb{R},\mathbb{C}^{2}) $$
satisfying
\begin{equation*}
  \langle \vec{\psi}(t)-P_{c}(t)\vec{\psi}(t),\sigma_{3}\mathfrak{G}_{\ell}(\vec{Z}_{j,\ell})(t,x) \rangle=  \langle \vec{\psi}(t)-P_{c}(t)\vec{\psi}(t),\sigma_{3}\mathfrak{G}_{\ell}(\vec{Z}^1_{j,\ell})(t,x) \rangle=0,
\end{equation*}
and
$
    P_{c}(t)\vec{\psi}(t,x)=\mathcal{T}(\vec{\phi}_{0})(t,x),
$
for all $t\geq 0.$
\end{theorem}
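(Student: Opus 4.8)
The plan is to reduce the assertion to Proposition~\ref{princ} by a perturbation argument, after first producing three families of building-block solutions of \eqref{p} and recording their behaviour at $t=0$. For $\vec{\phi}_{0}$ in the domain of $\mathcal{S}$ I would set $\mathcal{T}(\vec{\phi}_{0}):=\mathcal{S}(\vec{\phi}_{0})+\vec{R}$, where $\vec{R}(t)=i\int_{t}^{+\infty}\mathcal{U}(t,s)\,\mathrm{Res}(s)\,ds$, $\mathcal{U}(t,s)$ is the propagator of \eqref{p}, and $\mathrm{Res}(s)$ is the \eqref{p}-residual of the explicit function $\mathcal{S}(\vec{\phi}_{0})$ (a sum of terms in which a potential $V_{j}(s)$ acts on a distorted or free dispersive wave coming from a different channel). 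Since $M_{\ell}=0$, the propagator $\mathcal{U}(t,s)$ grows at most polynomially in $|t-s|$ on $L^{2}_{x}$, so $\vec{R}$ is well defined and $\|\vec{R}(t)\|_{L^{2}}\to 0$ once $\mathrm{Res}$ is sufficiently integrable in time; this gives \eqref{tphidecayy}. Likewise, for each nonzero eigenvalue $\lambda_{j,\ell}$ and each generalized-kernel vector I would define $\mathfrak{G}_{\ell}(\vec{Z}_{j,\ell})$ and $\mathfrak{G}_{\ell}(\vec{Z}^{1}_{j,\ell})$ by the analogous Duhamel iteration from $t=+\infty$ around the profiles in \eqref{tg1}, \eqref{tg2}; here one uses that, since $v_{1}>\dots>v_{m}$ and $y_{\ell}-y_{\ell+1}>L$, the moving centers $y_{\ell}+v_{\ell}t$ stay pairwise at distance $\geq L$ for all $t\geq 0$, so every relevant cross term involves a potential at distance $\geq L$ from the channel it acts on. The output I need, besides \eqref{tphidecayy}--\eqref{tg2} and linearity in the data, is that at $t=0$ each building block differs from the corresponding leading profile by $O_{L^{2}}\big(\varepsilon(L)\,(\|\vec{\phi}_{0}\|+\sum_{\ell,j}(|a_{j,\ell}|+|a^{1}_{j,\ell}|))\big)$ with $\varepsilon(L)\to 0$ as $L\to\infty$, which amounts to an integrability bound on $\mathrm{Res}$ carrying a gain $\varepsilon(L)$.

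Now consider the linear map $\Psi:(\vec{\phi}_{0},\{a_{j,\ell}\},\{a^{1}_{j,\ell}\})\mapsto\mathcal{T}(\vec{\phi}_{0})(0,\cdot)+\sum_{\ell,j}a_{j,\ell}\mathfrak{G}_{\ell}(\vec{Z}_{j,\ell})(0,\cdot)+\sum_{\ell,j}a^{1}_{j,\ell}\mathfrak{G}_{\ell}(\vec{Z}^{1}_{j,\ell})(0,\cdot)$, which sends data to the $t=0$ value of the associated \eqref{p}-solution. Proposition~\ref{princ} says precisely that the ``frozen'' map $\Psi_{0}$, obtained by replacing each building block by its leading profile at $t=0$ (that is, $\mathcal{S}(\vec{\phi}_{0})(0,\cdot)$ together with the moving discrete modes at $t=0$), is a bijection onto $L^{2}_{x}$ with bounded inverse, $\Ra\mathcal{S}(\Diamond)(0,x)$ closed and the modes spanning a finite-dimensional complement. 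By the $t=0$ bound above, $\Psi-\Psi_{0}$ has small (relative) operator norm once $L$ is large, hence $\Psi=\Psi_{0}\circ(\mathrm{Id}+\Psi_{0}^{-1}(\Psi-\Psi_{0}))$ is again a topological isomorphism onto $L^{2}_{x}$. Given any solution $\vec{\psi}$ of \eqref{p} with $\vec{\psi}_{0}\in L^{2}_{x}$, set $(\vec{\phi}_{0},\{a_{j,\ell}\},\{a^{1}_{j,\ell}\}):=\Psi^{-1}(\vec{\psi}_{0})$; then $\mathcal{T}(\vec{\phi}_{0})+\sum a_{j,\ell}\mathfrak{G}_{\ell}(\vec{Z}_{j,\ell})+\sum a^{1}_{j,\ell}\mathfrak{G}_{\ell}(\vec{Z}^{1}_{j,\ell})$ and $\vec{\psi}$ solve \eqref{p} with the same Cauchy data, hence coincide for all $t$ by uniqueness for \eqref{p}. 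This is \eqref{princ11stablemodel}, with \eqref{tphidecayy}, \eqref{tg1}, \eqref{tg2} built in, and uniqueness of the representation is the injectivity of $\Psi$. Letting $t\to+\infty$ and using that the dispersive profile $\mathcal{S}(\vec{\phi}_{0})(t)$, the localized profiles $e^{-i(\lambda_{j,\ell}-\omega_{\ell})t}e^{i\theta_{j}\sigma_{3}}\vec{Z}_{j,\ell}(\cdot-v_{\ell}t-y_{\ell})$ at the diverging centers $y_{\ell}+v_{\ell}t$, and the polynomially growing $\vec{Z}^{1}$-profiles are asymptotically linearly independent --- together with the injectivity of $\mathcal{S}$ from Section~\ref{sec:dispersivemap} --- shows moreover that $\mathcal{T}(\vec{\phi}_{0})$, $\mathfrak{G}_{\ell}(\vec{Z}_{j,\ell})$, $\mathfrak{G}_{\ell}(\vec{Z}^{1}_{j,\ell})$ are the unique solutions of \eqref{p} with the stated asymptotics. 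Finally, \eqref{princ11stablemodel} realizes $L^{2}_{x}$ as the direct sum of the closed subspace $\Ra\mathcal{T}(t)=\mathcal{U}(t,0)\,\Ra\mathcal{T}(0)$ and the finite-dimensional span of the moving modes at time $t$; $P_{c}(t)$ is the associated bounded projection onto $\Ra\mathcal{T}(t)$, so $P_{c}(t)\vec{\psi}(t)=\mathcal{T}(\vec{\phi}_{0})(t)$, and the displayed biorthogonality relations, which characterize $P_{c}(t)$, follow from the conservation of the pairing $\langle\cdot,\sigma_{3}\cdot\rangle$ along \eqref{p} (valid because $\sigma_{3}\mathcal{H}_{\ell}^{*}\sigma_{3}=\mathcal{H}_{\ell}$, equivalently $\sigma_{3}V_{j}(t)^{*}\sigma_{3}=V_{j}(t)$) together with the $\sigma_{3}$-orthogonality --- up to $O(e^{-\gamma L})$ errors from the spatial separation --- of discrete modes of distinct $\mathcal{H}_{\ell}$ and of distinct eigenvalues.

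The entire argument rests on the $t=0$ bound of the first step, i.e.\ on the integrability in time, with a gain $\varepsilon(L)$, of the \eqref{p}-residual of $\mathcal{S}(\vec{\phi}_{0})$. This is genuinely delicate without the large-velocity separation assumption of \cite{dispa}: when $|v_{j}-v_{\ell}|$ is small, a typical cross term $V_{j}(t)\,(\text{$\ell$-channel evolution})(t)$ decays only like $\langle t\rangle^{-1/2}$ in $L^{2}_{x}$ --- the potential $V_{j}$ then lies in the bulk region where the $\ell$-channel wave disperses with that rate --- and this is not integrable. The required gain comes entirely from the refined structure of the distorted Fourier transforms $\hat{G}_{\omega_{\ell}}$, $G^{*}_{\ell}$, $F^{*}_{\ell}$ and from the recursive construction of the profiles $\{\vec{\phi}_{\ell}\}$ and $\vec{\varphi}$ in Definition~\ref{s0def}: conditions (b)--(c) there encode exactly the reflection and transmission of each channel off each potential (through $r_{\ell},s_{\ell}$), so that the leading dispersive tails cancel and $\mathrm{Res}$ becomes $O(\varepsilon(L)\langle t\rangle^{-1-\delta})$ (or faster). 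Carrying this out --- the structural identities for the two kinds of linear maps, established in Sections~\ref{sec:linearmaps}--\ref{sec:stabscat} --- is the crux; once it is in place, Theorem~\ref{stablemodel} follows from the soft functional-analytic steps above.
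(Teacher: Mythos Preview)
Your overall plan --- construct the building blocks $\mathcal{T}(\vec{\phi}_0)$, $\mathfrak{G}_\ell(\vec{Z}_{j,\ell})$, $\mathfrak{G}_\ell(\vec{Z}^1_{j,\ell})$, then show the data-to-$t{=}0$ map $\Psi$ is a small perturbation of the isomorphism $\Psi_0$ from Proposition~\ref{princ} --- is the route the paper takes (the remark after the theorem says it is a corollary of Theorem~\ref{stablecase}, and the building blocks come from Proposition~\ref{remark4}); the Neumann-series step and the projection $P_c(t)$ are then exactly as you describe. The gap is in how you construct the building blocks.

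You set $\vec{R}(t)=i\int_t^{+\infty}\mathcal{U}(t,s)\,\mathrm{Res}(s)\,ds$ and assert that, since $M_\ell=0$, the full \eqref{p}-propagator $\mathcal{U}(t,s)$ grows at most polynomially in $|t-s|$. That bound is \emph{not} available a priori: Gronwall on the time-dependent Hamiltonian only gives $\|\mathcal{U}(t,s)\|\le e^{C|t-s|}$ with $C$ of order $\sum_\ell\|V_\ell\|_{L^\infty}$, independent of the velocity gaps. The residual, on the other hand, decays like $e^{-\beta\min_\ell(v_\ell-v_{\ell+1})\,s}$ (this is Lemma~7.2 of \cite{dispa}; it is indeed the structural cancellation encoded in conditions (b)--(c) of Definition~\ref{s0def}, and it is already exponential, not merely $\langle t\rangle^{-1-\delta}$). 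When $\min_\ell(v_\ell-v_{\ell+1})$ is small --- precisely the regime this paper addresses --- the residual decay rate is smaller than $C$ and your Duhamel integral diverges. The polynomial bound on $\mathcal{U}$ is in fact a \emph{consequence} of Theorem~\ref{stablecase} (properties (P1)--(P5)), so invoking it here is circular.

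The paper avoids this by never using the full propagator. In Section~\ref{prooftunst2} the remainder $\vec{r}=\mathcal{T}(\vec{\phi})-\mathcal{S}(\vec{\phi})$ is decomposed \emph{at each time} via Proposition~\ref{princ} into an $\mathcal{S}$-part plus discrete-mode coefficients; the $\mathcal{S}$-part is then propagated by $\mathcal{S}(t)\circ\mathcal{S}^{-1}(s)$, which is uniformly bounded on $L^2$ by Theorem~\ref{decaySphi} (and \emph{this} is where the linear-map analysis of Sections~\ref{sec:linearmaps}--\ref{sec:dispersivemap} is really used), while the mode coefficients satisfy explicit scalar integral equations. A Banach fixed point on this coupled system yields $\mathcal{T}(\vec{\phi})$ and $\mathfrak{G}_\ell$ with the $O(e^{-\epsilon t-\epsilon L})$ bounds of Proposition~\ref{remark4}, after which your $\Psi$-perturbation argument goes through verbatim. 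So the crux is not the residual estimate itself but replacing $\mathcal{U}(t,s)$ by the channel-adapted propagation $\mathcal{S}(t)\circ\mathcal{S}^{-1}(s)$.
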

\begin{remark}
 Theorem \ref{stablemodel} is a corollary of Theorem \ref{stablecase}. Moreover, Theorem \ref{stablecase} covers the result of Theorem \ref{stablemodel} and deals with the cases where  \eqref{eq:stablecond}  does not  hold.   
\end{remark}
\noindent {\bf Unstable models:} As indicated above, when unstable modes appear, exponential instabilities will mess up dynamics. If we stay away from unstable modes, even in the unstable models, we can construct special solutions converging to scattering solutions, moving stable modes, and zero modes. In some sense, the following proposition gives the existence of wave operators away from unstable modes.
\begin{proposition}\label{remark4}
 There is an small constant $\epsilon\in (0,1),$ and for any $\vec{\phi}\in$ Domain of $\mathcal{S}$
there exist a solution $\mathcal{T}(\vec{\phi})(t)$ for any $\vec{\phi}\in$ Domain of $\mathcal{S},$ and solutions $\mathfrak{G}_{\omega_{\ell},v_{\ell},y_{\ell}}(\mathfrak{v}_{\omega_{\ell},\lambda_{\ell,n}})(t,x)$ and for any $\lambda_{\ell,n}$ with $\I\lambda_{\ell,n}<0$ or $\lambda_{\ell,n}=0$ satisfying \eqref{tphidecayy}, \eqref{tg1}, \eqref{tg2} and
\begin{align}\label{eq:convTS}
  \norm{\mathcal{T}(\vec{\phi})(t)-\mathcal{S}(\vec{\phi})(t)}_{L^{2}_{x}(\mathbb{R})}\lesssim & e^{{-}\epsilon t-\epsilon \min_{\ell}y_{\ell}-y_{\ell+1}}\norm{\mathcal{S}(\vec{\phi})(0)}_{L^{2}_{x}(\mathbb{R})},
\end{align}
\begin{align}
  \label{G1solll}
\norm{\mathfrak{G}_{\ell}(\vec{Z}_{j,\ell})(t,x)-e^{{-}i(\lambda_{j,\ell}-\omega_{\ell})t}e^{i\theta_{j}(t,x)\sigma_{3}}\vec{Z}_{j,\ell}(x-v_{\ell}t-y_{\ell})}_{L^{2}_{x}(\mathbb{R})}\lesssim & e^{{-}\epsilon t-\epsilon \min_{\ell}y_{\ell}-y_{\ell+1}},
  \end{align}and
  \begin{align}\label{G2soll}
\norm{\mathfrak{G}_{\ell}(\vec{Z}^{1}_{j,\lambda_{\ell}})(t,x)-e^{i\theta_{j}(t,x)\sigma_{3}}\vec{Z}^1_{j,\ell}(x-v_{\ell}t-y_{\ell})-te^{i\theta_{j}(t,x)\sigma_{3}}\vec{Y}_{j,\ell}(x-v_{\ell}t-y_{\ell})}_{L^{2}_{x}(\mathbb{R})}\lesssim  e^{{-}\epsilon t-\epsilon \min_{\ell}y_{\ell}-y_{\ell+1}},
   \end{align}
   where $\mathcal{H}_{\ell}Z^1_{j,\ell}=Y_{j,\ell}.$
\end{proposition}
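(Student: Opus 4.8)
The plan is to construct the special solutions $\mathcal{T}(\vec\phi)$, $\mathfrak{G}_\ell(\vec Z_{j,\ell})$, $\mathfrak{G}_\ell(\vec Z^1_{j,\ell})$ directly by a fixed-point argument run \emph{backwards in time from $t=+\infty$}, i.e. we prescribe the desired asymptotic profile and solve the Duhamel integral equation on a time interval $[T_0,+\infty)$, where $T_0$ will be chosen large together with the separation parameter $\min_\ell(y_\ell-y_{\ell+1})$. Concretely, for the scattering piece we seek $\mathcal{T}(\vec\phi)(t) = \mathcal{S}(\vec\phi)(t) + \vec h(t)$ and derive from \eqref{p} an equation for the error $\vec h$ of the form $\vec h(t) = \int_t^{+\infty} e^{i(t-\tau)\mathcal{H}_0}\,\big[ \text{(source terms)} + \sum_j V_j(\tau)\vec h(\tau)\big]\,d\tau$, where the source terms measure how far the ansatz $\mathcal{S}(\vec\phi)$ fails to solve \eqref{p}. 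Here I would lean on exactly the structure that makes $\mathcal{S}$ natural: each summand in \eqref{eq:Sphi} is, after the Galilei conjugation $e^{i\sigma_3(\frac{v_\ell x}{2}-\ldots)}$, an honest free-like evolution $e^{-it(k^2+\omega_\ell)\sigma_3}$ pushed through the distorted Fourier operator $\hat G_{\omega_\ell}$ centered at $y_\ell+v_\ell t$, so that $(i\partial_t + \mathcal{H}_0 + V_\ell)$ applied to that summand already vanishes \emph{except} for the contribution of the \emph{other} potentials $V_{\ell'}$, $\ell'\ne\ell$, evaluated on a wave concentrated near a \emph{different} moving center. Since distinct centers separate linearly in time, $|(\ell'\text{-center}) - (\ell\text{-center})| \gtrsim t + \min_\ell(y_\ell - y_{\ell+1})$, and the exponential decay \eqref{decV} of the potentials together with local decay/propagation estimates for $e^{-it\mathcal{H}_\ell}$ on the essential spectrum (these are the dispersive bounds from \cite{dispa}, available here because each single $\mathcal{H}_\ell$ is fixed) gives a source term bounded in $L^1_t([T_0,\infty);L^2_x)$ by $e^{-\epsilon t - \epsilon\min_\ell(y_\ell-y_{\ell+1})}$. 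That is precisely the RHS appearing in \eqref{eq:convTS}. The term $-\frac{1}{\sqrt{2\pi}}\int e^{-itk^2\sigma_3}\vec\varphi(k)e^{ikx}\,dk$ in \eqref{eq:Sphi} is a genuine free Schrödinger wave; the recursive construction (b)–(c) in Definition \ref{s0def} is designed so that the reflected/transmitted pieces generated when this free wave and the $\hat G_{\omega_\ell}$-waves cross each potential $V_\ell$ cancel to leading order, and I would record that cancellation as an algebraic identity (this is the "structural identity" the introduction advertises) so that what remains in the source is again only the super-exponentially small off-center interaction.

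The contraction itself is then standard: on the space $X = \{\vec h \in C([T_0,\infty);L^2_x) : \sup_t e^{\epsilon t + \epsilon\min_\ell(y_\ell - y_{\ell+1})}\|\vec h(t)\|_{L^2_x} < \infty\}$, the map $\Phi(\vec h)(t) = \int_t^\infty e^{i(t-\tau)\mathcal{H}_0}[\text{source} + \sum_j V_j(\tau)\vec h(\tau)]\,d\tau$ is a contraction once $T_0$ and $\min_\ell(y_\ell - y_{\ell+1})$ are large: the source lies in $X$ with small norm by the paragraph above, and the term $\int_t^\infty e^{i(t-\tau)\mathcal{H}_0}\sum_j V_j(\tau)\vec h(\tau)\,d\tau$ is estimated by $\|V_j\|_{L^1_t L^\infty_x}$ (or better, by a weighted local-decay bound for the free propagator, since the $V_j$ have exponential spatial decay) times $\|\vec h\|_X$, which is $\ll 1$. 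The fixed point $\vec h$ gives $\mathcal{T}(\vec\phi)$ and, since $\|\vec h(t)\|_{L^2}\to 0$, property \eqref{tphidecayy} of Theorem \ref{stablemodel} is immediate and \eqref{eq:convTS} is the quantitative version. The constructions of $\mathfrak{G}_\ell(\vec Z_{j,\ell})$ and $\mathfrak{G}_\ell(\vec Z^1_{j,\ell})$ are the same scheme but with the prescribed profile being a single moving discrete mode $e^{-i(\lambda_{j,\ell}-\omega_\ell)t}e^{i\theta_j\sigma_3}\vec Z_{j,\ell}(x-v_\ell t - y_\ell)$ (resp. the mode plus its secular $t\vec Y_{j,\ell}$ Jordan term): applying $(i\partial_t + \mathcal{H}_0 + V_\ell)$ to this profile kills the $V_\ell$ and $\mathcal{H}_{0,\omega_\ell}$ contribution exactly because $\mathcal{H}_\ell \vec Z_{j,\ell} = \lambda_{j,\ell}\vec Z_{j,\ell}$ (resp. $\mathcal{H}_\ell\vec Z^1_{j,\ell} = \vec Y_{j,\ell}$, $\mathcal{H}_\ell \vec Y_{j,\ell}=0$), leaving only $\sum_{\ell'\ne\ell}V_{\ell'}(\tau)$ acting on a function \emph{exponentially localized} near the $\ell$-center — again super-exponentially small against the $\ell'$-potentials, and moreover multiplied only by the polynomially bounded factor $\langle t\rangle$ in the Jordan case, which is harmless against $e^{-\epsilon t}$. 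Since $\vec Z_{j,\ell}$ and $\vec Z^1_{j,\ell}$ decay exponentially, their free Schrödinger evolution also disperses, so the same weighted-$L^2$ contraction applies and yields \eqref{G1solll}, \eqref{G2soll}.

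The main obstacle is the interaction term itself when the velocity gap is \emph{not} large: the off-center contribution $\sum_{\ell'\ne\ell}V_{\ell'}(\tau,\cdot)\cdot(\text{$\ell$-wave})(\tau,\cdot)$ does decay like $e^{-\gamma|(\ell'\text{-center})-(\ell\text{-center})|} = e^{-\gamma|v_{\ell'}-v_\ell|\tau - \gamma(\ldots)}$ \emph{only where both functions are in their spatially localized regime}, but the $\ell$-wave — being a dispersive Schrödinger wave — has an $O(t^{-1/2})$ tail spread over the whole line, not an exponential tail, so one cannot naively gain the separation factor from the potential's decay alone. The resolution, which I expect to occupy the technical heart of the argument, is to split the $\ell$-wave into a part supported in a ball of radius $\sim \eta t$ around the $\ell$-center (for small $\eta \ll \min_\ell|v_\ell - v_{\ell+1}|$) where the potential $V_{\ell'}$ contributes $e^{-\gamma(\min|v_\ell-v_{\ell+1}|-\eta)\tau}$, and the complementary outgoing part, which is handled by a propagation estimate / non-stationary phase argument for $e^{-it\mathcal{H}_\ell}P_{e,\ell}$ (the outgoing-tail / local-smoothing bounds established in \cite{dispa} for the single operator $\mathcal{H}_\ell$) that yields $O(\langle t\rangle^{-N})$ for the mass that has traveled faster than $\eta t$; feeding this into the weighted contraction still closes, with $\epsilon$ chosen smaller than $\gamma\min_\ell|v_\ell - v_{\ell+1}|/2$, say. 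A secondary point requiring care is that in the \emph{unstable} models the full evolution $e^{-it\mathcal{H}_\ell}$ has exponentially growing modes, so one must work on the \emph{essential-spectrum (and stable-center) subspace}: the ansatz is built only from $\hat G_{\omega_\ell}$-waves and the prescribed stable/zero modes (never the $\I\lambda_{\ell,n}>0$ modes), and the Duhamel iteration is run after projecting onto the complement of the unstable directions of each $\mathcal{H}_\ell$, with the finite-dimensional unstable components solved separately backward from $+\infty$ (where they must vanish) — this is why the statement restricts to $\I\lambda_{\ell,n}\le 0$ and why $\mathcal{T}$ produces a solution of the genuine equation \eqref{p} rather than only of a projected equation.
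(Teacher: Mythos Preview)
Your overall strategy---prescribe the profile at $t=+\infty$, write the remainder $\vec h = \mathcal{T}(\vec\phi)-\mathcal{S}(\vec\phi)$, and run a fixed-point argument in an exponentially weighted space---is the right shape and is what the paper does. The gap is in the contraction step. You write the Duhamel formula with the \emph{free} propagator $e^{i(t-\tau)\mathcal{H}_0}$ and keep the full potential term $\sum_j V_j(\tau)\vec h(\tau)$ inside the iteration. But $\|V_j(\tau,\cdot)\|_{L^\infty_x}$ is constant in $\tau$ (the potential merely translates), so $\|V_j\|_{L^1_tL^\infty_x}=\infty$; and even with the best weighted local-decay bound for the free flow, the Lipschitz constant of $\vec h\mapsto\int_t^\infty e^{i(t-\tau)\mathcal{H}_0}\sum_j V_j(\tau)\vec h(\tau)\,d\tau$ in your space $X$ is of order $\epsilon^{-1}$ (or $\epsilon^{-1/2}$), because the only time-decay available from $\vec h\in X$ is $e^{-\epsilon\tau}$ and integrating it against an $O(1)$ potential gives a factor $\epsilon^{-1}$. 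Since $\epsilon$ is forced to be small (it cannot exceed a multiple of the velocity gap $\min_\ell(v_\ell-v_{\ell+1})$, which here is \emph{not} assumed large---that is the whole point of the paper), the map is never a contraction. Making $T_0$ or $\min_\ell(y_\ell-y_{\ell+1})$ large does not help: these parameters shrink the \emph{source} but not the Lipschitz constant.

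The paper avoids this by never treating $\sum_j V_j$ as a perturbation of the free flow. Instead it decomposes the remainder $\vec r$ via Proposition~\ref{princ} into a dispersive piece $\mathcal{S}(\varphi_1(t))(t,\cdot)$ plus moving discrete modes with scalar coefficients $d_{j,\ell}(t),\,d^1_{j,\ell}(t)$. Plugging this into the full equation \eqref{p}, the potentials are \emph{absorbed}: each $V_\ell$ acting on the $\ell$-th piece of $\mathcal{S}(\varphi_1)$ and on the $\ell$-th discrete modes is part of the operator $\mathcal{H}_\ell$ and cancels exactly, so the residual forcing $Forc(t,x)$ consists only of the cross terms $V_{\ell'}\cdot(\ell\text{-centered object})$ with $\ell'\ne\ell$, which are genuinely $O(e^{-\beta[(v_\ell-v_{\ell+1})t+(y_\ell-y_{\ell+1})]})$. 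The dispersive component then satisfies an integral equation with the bounded operator $\mathcal{S}(t)\circ\mathcal{S}^{-1}(s)$ (Theorem~\ref{decaySphi}) in place of the free propagator, and the discrete coefficients satisfy scalar ODEs; unstable coefficients are determined by $d_{j,\ell}(t)=-i\int_t^\infty e^{-i\lambda_{j,\ell}(t-s)}b_{j,\ell}(s)\,ds$, stable ones by the forward integral, and zero-mode coefficients by \eqref{a1}--\eqref{a2}. Every right-hand side is now exponentially small, so the Banach fixed point closes with a Lipschitz constant of order $e^{-\epsilon y}$, which \emph{is} small. To fix your argument you would need to replace the free Duhamel by this $\mathcal{S}$-adapted decomposition (or, equivalently, by a channel decomposition that routes each piece of $\vec h$ through the propagator $e^{-it\mathcal{H}_\ell}$ of the nearest center); without that, the $O(1)$ potentials cannot be beaten.
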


Different from \cite{dispa}, when $\min_{\ell}v_{\ell}-v_{\ell+1}$ is small, we do not expect to have the existence of a solution of \eqref{p} converging asymptotically in $L^{2}$ to 
\begin{equation*}
    e^{{-}i(\lambda_{j,\ell}-\omega_{\ell})t}e^{i\theta_{j}(t,x)\sigma_{3}}\vec{Z}_{j,\ell}(x-v_{\ell}t-y_{\ell}),
\end{equation*}
for an eigenvalue $\lambda_{j,\ell}$ with $\I\lambda_{j,\ell}>0$.

As a direction application of the proposition above, we can rewrite the decomposition \eqref{princ11} in terms of the special solutions constructed above.
\begin{corollary}Under the same assumption as Proposition \ref{princ} one has that  $\Ra \mathcal{T}(\Diamond)(t)$ is a closed subspace of $L^{2}_{x}(\mathbb{R},\mathbb{C}^{2})$ and 
\begin{align}   
f(x)&=\mathcal{T}
\left(\overrightarrow{\phi_{0}}\right)(t,x)+\sum_{\ell=1}^{m}\sum_{j=1}^{K_{\ell,2}}a_{j,\ell}^{1}\mathfrak{G}_{\ell}(\vec{Z}^1_{j,\ell})(t,x)+\sum_{\ell=1}^{m}\sum_{\I\lambda_{\ell,j}\leq 0}a_{j,\ell}\mathfrak{G}_{\ell}(\vec{Z}_{j,\ell})(t,x)\\&{+}\sum_{\ell=1}^{m}\sum_{\I\lambda_{\ell,j}> 0}b_{j,\ell} e^{i\theta_{j}(t,x)\sigma_{3}}\vec{Z}_{j,\ell}(x-v_{\ell}t-y_{\ell})
\end{align}
for some $\vec{\phi_0}$ in the domain of $\mathcal{S}$, and some complex numbers $a_{j,\ell}^{1}$, $a_{j,\ell}$ and $b_{j,\ell}$. 
\end{corollary}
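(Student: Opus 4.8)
The corollary is a bookkeeping reformulation of Proposition~\ref{princ}: the plan is to take the decomposition~\eqref{princ11} as given and to replace each building block that appears there (the profile $\mathcal{S}(\vec\phi)$, the nonzero eigenmodes $e^{i\sigma_3(\cdots)}\vec Z_{j,\ell}$, and the generalized kernel modes $e^{i\sigma_3(\cdots)}\vec Z^1_{j,\ell}$) by the corresponding solution of \eqref{p} constructed in Proposition~\ref{remark4}, keeping track of which modes can and cannot be promoted to genuine solutions. First I would fix $t$ (say $t=0$, since the decomposition holds at all times by propagating \eqref{p}) and apply Proposition~\ref{princ} to $f$ to obtain $\vec\phi_0$, the coefficients $a_{j,\ell}$ (for all $j$, both signs of $\I\lambda_{\ell,j}$), and the coefficients $a^1_{j,\ell}$. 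Then I would invoke Proposition~\ref{remark4}: for $\vec\phi_0$ it yields $\mathcal{T}(\vec\phi_0)$ with $\|\mathcal{T}(\vec\phi_0)(t)-\mathcal{S}(\vec\phi_0)(t)\|_{L^2}\to 0$; for each generalized kernel mode and each nonzero eigenvalue with $\I\lambda_{\ell,j}\le 0$ it yields $\mathfrak{G}_\ell(\vec Z^1_{j,\ell})$ and $\mathfrak{G}_\ell(\vec Z_{j,\ell})$ obeying \eqref{tg2} and \eqref{tg1}. For the unstable eigenvalues $\I\lambda_{\ell,j}>0$, Proposition~\ref{remark4} does \emph{not} furnish a solution, so these terms are simply left as the explicit profiles $b_{j,\ell}e^{i\theta_j(t,x)\sigma_3}\vec Z_{j,\ell}(x-v_\ell t-y_\ell)$ with $b_{j,\ell}:=a_{j,\ell}$; I would observe that at $t=0$ this profile equals exactly the term appearing in \eqref{princ11} (matching the phase $\frac{v_\ell x}{2}-\frac{v_\ell^2\cdot 0}{4}+\omega_\ell\cdot 0+\gamma_\ell = \frac{v_\ell x}{2}+\gamma_\ell=\theta_j(0,x)$ up to the $\omega_\ell$ normalization, which is absorbed into the definition of $\mathfrak{G}_\ell$ and hence consistent), so the identity at $t=0$ is immediate termwise.

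The substantive point is that the resulting right-hand side, which \emph{a priori} is only known to agree with $f$ at $t=0$, is in fact a valid alternative decomposition, and that the map $\vec\phi_0\mapsto \mathcal{T}(\vec\phi_0)(t)$ has closed range. For the closedness of $\Ra\mathcal{T}(\Diamond)(t)$ I would argue as follows: by \eqref{eq:convTS} the operator $\vec\phi\mapsto \mathcal{T}(\vec\phi)(0)-\mathcal{S}(\vec\phi)(0)$ has operator norm $\lesssim e^{-\epsilon\min_\ell(y_\ell-y_{\ell+1})}$ on $L^2_x$, so for $\min_\ell(y_\ell-y_{\ell+1})$ large enough (increase $L$ from Proposition~\ref{princ} if necessary) the map $\vec\phi\mapsto\mathcal{T}(\vec\phi)(0)$ is a small bounded perturbation of $\vec\phi\mapsto\mathcal{S}(\vec\phi)(0)$, whose range is closed by Proposition~\ref{princ}; a standard perturbation argument (the perturbed map equals $\mathcal{S}(\Diamond)(0)$ composed with an operator that is $\mathrm{Id}+$small, hence invertible, on the relevant domain, or more carefully: the two maps have the same kernel and $\mathcal{T}(\Diamond)(0) = \mathcal{S}(\Diamond)(0)(\mathrm{Id}+R)$ with $\|R\|\ll 1$) then gives that $\Ra\mathcal{T}(\Diamond)(0)$ is closed, and since $\mathcal{T}(\vec\phi)$ solves \eqref{p} the flow is an isomorphism of $L^2_x$, so $\Ra\mathcal{T}(\Diamond)(t)$ is closed for every $t$.

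For uniqueness of the new representation I would transfer uniqueness from Proposition~\ref{princ}: given two decompositions of the type in the corollary, evaluate both at $t=0$; each of $\mathcal{T}(\vec\phi_0)(0)$, $\mathfrak{G}_\ell(\vec Z^1_{j,\ell})(0)$, $\mathfrak{G}_\ell(\vec Z_{j,\ell})(0)$ ($\I\lambda_{\ell,j}\le0$), and the explicit unstable profiles at $t=0$ differ from the corresponding basis elements of \eqref{princ11} by the controlled error terms, and after subtracting two such decompositions one gets a decomposition of $0$ of the form \eqref{princ11} perturbed by $O(e^{-\epsilon\min(y_\ell-y_{\ell+1})})$; choosing $L$ large makes the perturbed family still a basis (again by the same smallness argument), forcing all coefficient differences and $\vec\phi_0$-differences to vanish. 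Propagating by \eqref{p} then gives uniqueness for all $t$. The main obstacle I anticipate is making the perturbation bookkeeping fully rigorous — in particular, verifying that the "$O(e^{-\epsilon\min(y_\ell-y_{\ell+1})})$-perturbed" spanning family $\{\mathcal{T}(\Diamond)(0),\mathfrak{G}_\ell(\vec Z^1_{j,\ell})(0),\mathfrak{G}_\ell(\vec Z_{j,\ell})(0),e^{i\theta_j(0,\cdot)\sigma_3}\vec Z_{j,\ell}(\cdot-y_\ell)\}$ still realizes a direct-sum decomposition of $L^2_x$, which requires that the finite-dimensional span of the discrete-mode pieces stays transverse to $\Ra\mathcal{T}(\Diamond)(0)$ under the perturbation; this follows from the quantitative transversality implicit in Proposition~\ref{princ} (the projections there are bounded) together with the exponential smallness in \eqref{eq:convTS}, \eqref{G1solll}, \eqref{G2soll}, but it must be stated carefully rather than hand-waved. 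Everything else is routine matching of phases and $t=0$ values.
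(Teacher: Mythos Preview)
Your proposal is correct and matches the paper's approach: the corollary is stated in the paper as ``a direct application of the proposition above'' (Proposition~\ref{remark4}) used to ``rewrite the decomposition~\eqref{princ11},'' and the paper gives no further proof. Your plan of substituting each building block of~\eqref{princ11} by the corresponding special solution from Proposition~\ref{remark4} (leaving the unstable modes as raw profiles since Proposition~\ref{remark4} does not construct solutions for $\I\lambda_{j,\ell}>0$), and then deducing closedness of $\Ra\mathcal{T}(\Diamond)(t)$ by the smallness~\eqref{eq:convTS} as a perturbation of the already-closed $\Ra\mathcal{S}(\Diamond)(t)$, is exactly what the paper intends; you are simply spelling out the bookkeeping that the paper leaves implicit.
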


From the decomposition above, we can define that for any $t\geq 0$ the existence of a unique projection $P_{\mathrm{cont}}(t)$ onto $\Ra \mathcal{T}(\Diamond)(t)$ such that
\begin{align*}
   \vec{f}(t,x)- P_{\mathrm{cont}}(t)\vec{f}(t,x)=&\sum_{\ell=1}^{m}\sum_{j=1}^{K_{\ell,2}}a_{j,\ell}^{1}(t)\mathfrak{G}_{\ell}(\vec{Z}^1_{j,\ell})(t,x)+\sum_{\ell=1}^{m}\sum_{\I\lambda_{\ell,j}\leq 0}a_{j,\ell}(t)\mathfrak{G}_{\ell}(\vec{Z}_{j,\ell})(t,x)\\&{+}\sum_{\ell=1}^{m}\sum_{\I\lambda_{\ell,j}> 0}b_{j,\ell}(t) e^{i\theta_{j}(t,x)\sigma_{3}}\vec{Z}_{j,\ell}(x-v_{\ell}t-y_{\ell}),
\end{align*}
for complex numbers $a_{j,\ell}^{1}(t)$, $a_{j,\ell}(t)$ and $b_{j,\ell}(t)$ for all indices $j$ and $\ell.$ Note that this projection does not commute with the linear flow\footnote{It is possible that one can construct projections that commute with the linear flow via exponential dichotomies like in \cite{CJdicho}.}.
%

To further study the scattering theory  of solutions to \ref{p},  due to the unstable nature of linear flows, we introduce the stable subspace.

\begin{definition}[Stable space]\label{def:stable}
A solution $\overrightarrow{\psi}(t,x)$ of \eqref{p} is in the \emph{{\bf stable  space}} if it satisfies for any $\ell\in\{1,2\,...,\,m\}$, and $\I\lambda_{\ell,j}> 0$
\begin{equation}\label{asyorthsub}
    \lim_{t\to{+}\infty}\left\langle \overrightarrow{\psi}(t,x),\sigma_{3}e^{i\theta_{j}(t,x)\sigma_{3}}\vec{Z}_{j,\ell}(x-v_{\ell}t-y_{\ell})\right\rangle =0.
\end{equation}
Clearly, the scattering space defined above forms a subspace.
\end{definition}
Note that Proposition \ref{remark4} in particular shows that the stable space is non-trivial. 
In the stable space, solutions exhibit more precise behavior.
\begin{theorem}\label{stablecase}
There exists $L>1$ such that if $\min_{\ell}y_{\ell}-y_{\ell+1}>L>1,$ and $\vec{\psi}$ is a solution of \eqref{p} satisfying \eqref{asyorthsub}, then $\vec{\psi}$ has a unique representation of the form
\begin{align}\label{princ11stablespace}
   \vec{\psi}(t,x)=& \mathcal{S}
\left(\overrightarrow{\phi}(t)\right)(t,x)+ \sum_{\ell=1}^{m}\sum_{j=1}^{K_{\ell,2}}a_{j,\ell}^{1}(t)e^{i\theta_{\ell}(t,x)\sigma_{3}}\vec{Z}^1_{j,\ell}(x-v_{\ell}t-y_{\ell})\\&{+}\sum_{\ell=1}^{m}\sum_{\I\lambda_{\ell,j}\leq 0}e^{i\theta_{\ell}(t,x)\sigma_{3}}a_{j,\ell}(t)\vec{Z}_{j,\ell}(x-v_{\ell}t-y_{\ell}),  \\&{+}\sum_{\ell=1}^{m}\sum_{\I\lambda_{\ell,j}> 0}e^{i\theta_{j}(t,x)\sigma_{3}}a_{j,\ell}(t) \vec{Z}_{j,\ell}(x-v_{\ell}t-y_{\ell})
\end{align}
such that the following estimates holds uniformly for all $t\geq 0.$
\begin{itemize}
\item [(P1)]
\begin{align*}
\max_{\ell,j,\I \lambda_{j,\ell}> 0} \vert a_{j,\ell}(t)\vert\lesssim & e^{{-}\beta \min_{\ell}(v_{\ell}-v_{\ell+1})t+(y_{\ell}-y_{\ell+1})} \norm{\vec{\psi}(0,x)}_{L^{2}_{x}(\mathbb{R})}.
\end{align*}
\item [(P2)]
\begin{align*}    
\max_{\ell,j,\I \lambda_{j,\ell}<  0} |a_{j,\ell}(t)|\lesssim & e^{{-}\min_{\ell}\vert \I \lambda_{\ell} \vert \frac{t}{2}}e^{{-}\frac{\beta y}{2}}\norm{\vec{\psi}(0,x)}_{L^{2}_{x}(\mathbb{R})}.
\end{align*}
\item[(P3)] There exist constant complex values $a_{j,\ell,\infty}$ such that for any $t\geq 0:$
\begin{equation}\label{oscilatinga}
\max_{\ell,j,\lambda_{\ell,j}\in\mathbb{R}\setminus\{0\}}\vert a_{j,\ell}(t)-e^{it\lambda_{j,\ell}}a_{j,\ell,\infty}\vert\lesssim e^{{-}\beta[\min_{\ell}(v_{\ell}-v_{\ell+1})t+(y_{\ell}-y_{\ell+1})]}\norm{\vec{\psi}(0,x)}_{L^{2}_{x}(\mathbb{R})}.
\end{equation}

\item [(P4)] For any $\ell \in [m],$ there exist constant complex values $a^1_{j,\ell,\infty},\,c_{j,\ell,\infty}$ such that for any $t\geq 0:$
\begin{align*}
\max_{\ell}\vert a_{j,\ell}^1(t)-a^1_{j,\ell,\infty}-c_{j,\ell,\infty}t\vert\lesssim & e^{{-}\beta\min_{h}(y_{h}-y_{h+1}+(v_{h}-v_{h+1})s)}\norm{\vec{\psi}(0,x)}_{L^{2}_{x}(\mathbb{R})},\\
\end{align*}
if $\mathcal{H}_\ell \vec{Z}_{j,\ell}\neq 0$ and $\mathcal{H}_\ell^2 \vec{Z}_{j,\ell}=0$;
\begin{align*}
\max_{\ell,j} \vert a_{j,\ell}^1(t)-c_{j,\ell,\infty}\vert\lesssim & e^{{-}\beta\min_{h}(y_{h}-y_{h+1}+(v_{h}-v_{h+1})s)}\norm{\vec{\psi}(0,x)}_{L^{2}_{x}(\mathbb{R})}
\end{align*}
if $\mathcal{H}_\ell \vec{Z}_{j,\ell}=0$
\item [(P5)] $\norm{\mathcal{S}(\vec{\phi}(t))(t,x)}_{L^{2}_{x}(\mathbb{R})}\lesssim \norm{\vec{\psi}(0,x)}_{L^{2}_{x}(\mathbb{R})}.$ 
\item [(P6)] There exists a function $\phi_{\infty}(k)\in L^{2}_{k}(\mathbb{R})$ belonging to the domain of $\mathcal{T}$ satisfying for all $t\geq 0$
\begin{align*}
    \norm{\mathcal{S}(\vec{\phi}(t))(t,x)-\mathcal{S}(\vec{\phi}_{\infty})(t,x)}_{L^{2}_{x}(\mathbb{R})} & \lesssim e^{{-}\beta\min_{h}(y_{h}-y_{h+1}+(v_{h}-v_{h+1})t)}\norm{\vec{\psi}(0,x)}_{L^{2}_{x}(\mathbb{R})},\\
    \norm{\mathcal{S}(\vec{\phi}(t))(t,x)-\mathcal{S}(\vec{\phi}_{\infty})(t,x)}_{H^{1}_{x}(\mathbb{R})} & \lesssim e^{{-}\beta\min_{h}(y_{h}-y_{h+1}+(v_{h}-v_{h+1})t)}\norm{\vec{\psi}(0,x)}_{L^{2}_{x}(\mathbb{R})}.
\end{align*}
\end{itemize}
\end{theorem}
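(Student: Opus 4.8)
\textbf{Proof proposal for Theorem \ref{stablecase}.}

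The plan is to bootstrap from the algebraic decomposition of Proposition \ref{princ} applied at each fixed time $t$, and then to use the assumption \eqref{asyorthsub} to control the growth of the unstable coefficients and the time dependence of all the modal coefficients. First I would fix $t\geq 0$ and apply Proposition \ref{princ} to the function $\vec{\psi}(t,\cdot)\in L^2_x(\mathbb{R},\mathbb{C}^2)$, obtaining a unique representation of $\vec\psi(t,x)$ as $\mathcal S(\vec\phi(t))(t,x)$ plus a finite sum of moving discrete and generalized-kernel modes with coefficients $a_{j,\ell}(t)$, $a^1_{j,\ell}(t)$. Uniqueness of the decomposition at each time, together with the fact that $\vec\psi$ solves \eqref{p}, means these coefficient functions are determined by pairing $\vec\psi(t)$ against the dual modes (built from $\sigma_3$ applied to suitable eigenfunctions of $\mathcal H_\ell^*$, using Lemma \ref{leper} and the biorthogonality of distorted Fourier data); this gives smooth functions of $t$ and provides the identities to be estimated. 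The scattering component is then $\mathcal S(\vec\phi(t))(t,x)=\vec\psi(t,x)-\sum(\cdots)$, which defines $\vec\phi(t)$ in the domain of $\mathcal S$ via the inversion formulas from Section \ref{sec:dispersivemap}.

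Next I would derive evolution equations for the coefficients. Plugging the decomposition into \eqref{p} and using that $\mathcal S(t)\vec\phi$ is an approximate solution whose error is governed by the off-diagonal interaction of the moving potentials (each $V_j$ evaluated far from center $j$ decays like $e^{-\gamma\min_h((y_h-y_{h+1})+(v_h-v_{h+1})t)}$ by \eqref{decV} and \eqref{eq:velocity}), one obtains a coupled system of ODEs of the schematic form $\dot a_{j,\ell}=i\lambda_{j,\ell}a_{j,\ell}+(\text{error})$, where the error terms are the interaction integrals bounded by $e^{-\beta\min_h((y_h-y_{h+1})+(v_h-v_{h+1})t)}$ times $L^2$ norms of the other components; the key input is the exponential localization of the modes $\vec Z_{j,\ell}$, $\vec Z^1_{j,\ell}$ and the good $L^\infty\to L^2$ bounds on $\mathcal S(t)$ from Section \ref{sec:dispersivemap}. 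For eigenvalues with $\I\lambda_{j,\ell}>0$ the homogeneous part grows, so I would run the ODE \emph{backwards} from $t=+\infty$: the hypothesis \eqref{asyorthsub} forces the "final data" for these coefficients to vanish, and integrating the inhomogeneous term from $t$ to $\infty$ against the decaying exponential $e^{-\I\lambda_{j,\ell}(s-t)}$ yields (P1). For $\I\lambda_{j,\ell}<0$ the homogeneous flow decays, giving (P2) directly by forward integration (possibly with a Duhamel term split at $t/2$). For real nonzero $\lambda_{j,\ell}$, the coefficient is $e^{it\lambda_{j,\ell}}$ times a slowly varying part that converges; subtracting its limit $a_{j,\ell,\infty}$ and estimating the tail of the Duhamel integral gives (P3). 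The generalized-kernel coefficients satisfy $\dot a^1_{j,\ell}=($const$)+($error$)$ or $\dot a^1_{j,\ell}=($error$)$ depending on whether $\mathcal H_\ell\vec Z_{j,\ell}$ vanishes, producing the affine-in-$t$ or constant limiting behavior in (P4). Finally (P5) follows by subtracting all modal terms from $\vec\psi(t)$ and using conservation of a suitable (non-positive-definite but controlled on the stable space) quadratic form together with the already-established bounds on the $a$'s, and (P6) follows from (P5) plus the fact that the difference $\mathcal S(t)\vec\phi(t)-\mathcal S(t)\vec\phi(s)$ for $s\leq t$ is again controlled by the Duhamel error, so $\vec\phi(t)$ is Cauchy and converges to some $\vec\phi_\infty$ at the stated exponential rate, with the $H^1$ bound coming from the smoothing/regularity properties of $\hat G_{\omega_\ell}$ and the decay estimates \eqref{decayrs1}.

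The main obstacle I expect is closing the coupled ODE system \emph{without} the large-speed-separation assumption: in \cite{dispa} the large gap decouples the unstable modes so each can be handled in isolation, whereas here the exponentially growing modes at different centers feed into each other through the interaction terms, and one must show that the feedback does not destroy the a priori bounds. The resolution should be a simultaneous bootstrap/continuity argument on the vector of all coefficients: assume the bounds (P1)--(P4) hold on $[0,T]$ with a fixed constant, show the interaction errors are then dominated by $e^{-\beta\min_h((y_h-y_{h+1})+(v_h-v_{h+1})s)}$ with a constant one can absorb once $L=\min_\ell(y_\ell-y_{\ell+1})$ is large enough, and conclude the bounds improve, hence hold for all $t$. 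Making the backward-integration well-posed for the growing modes requires knowing \emph{a posteriori} that the solution we are decomposing does not itself grow faster than the interaction scale — this is exactly where \eqref{asyorthsub} is used, converting "no unstable growth at $t=\infty$" into the quantitative bound (P1). A secondary technical point is verifying that $\mathcal S(t)\vec\phi(t)$ genuinely lies in the domain of $\mathcal T$ and that the norm bounds on $\mathcal S(t)$ are uniform in $t$; both are consequences of the structural identities for the distorted Fourier transforms established in Section \ref{sec:linearmaps} and the mapping properties of the dispersive map from Section \ref{sec:dispersivemap}, which I would invoke as black boxes.
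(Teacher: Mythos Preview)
Your proposal is correct and follows essentially the same route as the paper: decompose at each time via Proposition \ref{princ}, plug into \eqref{p} to obtain ODEs for the coefficients driven by an exponentially small forcing (the paper's $Forc(t,x)$, bounded via Lemma~7.2 of \cite{dispa}), integrate unstable modes backward from $+\infty$ using \eqref{asyorthsub}, integrate the rest forward, and close by a bootstrap in which largeness of $L=\min_\ell(y_\ell-y_{\ell+1})$ absorbs the feedback; your $t/2$ split for (P2) is exactly what is needed. The one small deviation is your appeal to a ``conserved (non-positive-definite) quadratic form'' for (P5): the paper does not use any conserved quantity here but instead invokes the uniform $L^2$-coercivity of the dispersive map, estimate \eqref{CCC1Sphi} of Theorem \ref{decaySphi}, applied to the Duhamel representation \eqref{integralT} for $\mathcal S(\vec\phi(t))(t,x)$, which directly gives $\|\mathcal S(\vec\phi(t))(t)\|_{L^2}\lesssim\|\mathcal S(\vec\phi(0))(0)\|_{L^2}+\int_0^t\|Forc(s)\|_{L^2}\,ds$ and feeds into the same bootstrap.
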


\begin{remark}
 If $\epsilon\in (0,1)$ is sufficiently small and $\vec{\psi}(t)$ is a solution of \eqref{p} satisfying for all $t\geq 0$
 \begin{equation}\label{eq:epsilongrowth}
     \norm{\vec{\psi}(t)}_{L^{2}_{x}(\mathbb{R})}\lesssim e^{\epsilon t},
 \end{equation}
it can be verified similarly to the proof of Theorem \ref{stablecase} that $\vec{\psi}$ has a unique decomposition of the form \eqref{princ11}, and it satisfies all properties $(\mathrm{P1})-(\mathrm{P6})$ when $t\geq 0.$  The condition  \eqref{eq:epsilongrowth} appears naturally when one construct the center-stable direction using exponential dichotomies, see \cite{CJdicho}.
\end{remark}

To ensure the decay of the solutions, we should remove all discrete modes so that the scattering space is introduced below\footnote{Technically, we do not need expressions for the asymptotic orthogonality to stable modes since all stable modes will decay exponentially. But we still include stable modes in the definition to make the statement clearer.}.
\begin{definition}[Scattering space]\label{def:scatter}
A solution $\overrightarrow{\psi}(t,x)$ of \eqref{p} is in the {\bf scattering space} if it satisfies for any $\ell\in\{1,2\,...,\,m\}$,
\begin{equation}\label{asyorth}
    \lim_{t\to{+}\infty}\left\langle \overrightarrow{\psi}(t,x),\sigma_{3}e^{i\theta_{j}(t,x)\sigma_{3}}\vec{Z}_{j,\ell}(x-v_{\ell}t-y_{\ell})\right\rangle =\lim_{t\to{+}\infty}\left\langle \overrightarrow{\psi}(t,x),\sigma_{3}e^{i\theta_{j}(t,x)\sigma_{3}}\vec{Z}^{1}_{j,\ell}(x-v_{\ell}t-y_{\ell})\right\rangle =0
\end{equation}
Clearly, the scattering space defined above again forms a subspace. 

\end{definition}
 For the general charge transfer models, we study the dispersive properties of solutions in the scattering space.
\begin{definition}\label{def:localization}
To measure the local behavior of solutions, we introduce the following characteristic function.
 \begin{align*}
     \chi_{\ell}(\tau,x)=&\chi_{\left[\frac{y_{\ell}+y_{\ell+1}+\tau(v_{\ell}+v_{\ell+1})}{2},\frac{y_{\ell}+y_{\ell-1}+\tau(v_{\ell}+v_{\ell-1})}{2}\right]}(x) \text{, if $\ell\neq 1 $ and $\ell\neq m,$}\\
     \chi_{1}(\tau,x)=&\chi_{\left(\frac{y_{1}+y_{2}+\tau(v_{1}+v_{2})}{2},{+}\infty\right)}(x),\\
     \chi_{m}(\tau,x)=&\chi_{\left({-}\infty,\frac{y_{m}+y_{m-1}+\tau(v_{m}+v_{m-1})}{2}\right)}(x). 
 \end{align*}
\end{definition}
\begin{theorem}\label{decaytheoscatter}
If $\min_{\ell}y_{\ell}-y_{\ell+1}>L,$ all solutions $\vec{\psi}(t,x)$ of \eqref{p} satisfying \eqref{asyorth} enjoy the following estimates:
for constants $C>c>0$ depending only on $\min_{\ell}v_{\ell}-v_{\ell+1}>0$ such that all $t\geq s\geq 0.$
\begin{align}\label{CCC1}
  c \norm{\vec{\psi}(0,x)}_{H^{j}_{x}(\mathbb{R})} \leq & \norm{\vec{\psi}(t,x)}_{H^{j}_{x}(\mathbb{R})}\leq C \norm{\vec{\psi}(0,x)}_{H^{j}_{x}(\mathbb{R})} \text{, for all $j\in\{0,1,2\},$}\\
\norm{\overrightarrow{\psi}(t,x)}_{L^{\infty}_{x}(\mathbb{R})}\leq & \max_{\ell} \frac{C}{(t-s)^{\frac{1}{2}}}\norm{(1+\vert x-y_{\ell}-v_{\ell}s\vert )\chi_{\ell}(s,x)\overrightarrow{\psi}(s,x)}_{L^{2}_{x}(\mathbb{R})}.
 \end{align}
Moreover, if in addition, we assume
\begin{itemize}
\item [(H4)] $\omega_{\ell}$ and ${-}\omega_{\ell}$ are not resonances\footnote{Recall that one says that $\pm \omega_\ell$ is a  resonance of $\mathcal{H}_{\ell}$ provided that $\mathcal{H}_{\ell}f=\pm \omega_\ell f$ has a solution $f\in L^\infty\, \text{but} \,f\notin L^2$.} of $ \mathcal{H}_{\ell},$
\end{itemize} then, for any $p\in (1,2)$ close enough to $1,$ $\vec{\psi}(t,x)$ satisfies for a $K>1$ depending uniquely on $\min_{\ell}v_{\ell}-v_{\ell+1}>0$ and $p^{*}=\frac{p}{p-1}$ the following decay estimates for all $t>s\geq 0.$
  \begin{align}\label{linfty}
  \norm{\vec{\psi}(t,x)}_{L^{\infty}_{x}(\mathbb{R})}\leq &\frac{K}{(t-s)^{\frac{1}{2}}}\Bigg[\norm{\vec{\psi}(s,x)}_{L^{1}_{x}(\mathbb{R})}\\&{+}e^{{-}\beta\min_{\ell}(y_{\ell}-y_{\ell+1}+(v_{\ell}-v_{\ell+1})s)}\norm{\vec{\psi}(s,x)}_{L^{2}_{x}(\mathbb{R})}\Bigg],\\ \label{weightedlinfty}
\norm{\frac{\vec{\psi}(t,x)}{(1+\vert x-y_{\ell}-v_{\ell}t\vert)}}_{L^{\infty}_{x}(\mathbb{R})}\leq & \frac{K (1+s)}{(t-s)^{\frac{3}{2}}} \norm{\overrightarrow{\psi}(s,x)}_{L^{1}_{x}(\mathbb{R})}\\ \nonumber 
&{+}\frac{K}{(t-s)^{\frac{3}{2}}}\max_{\ell}\norm{(1+\vert x-y_{\ell}-v_{\ell}s\vert )\chi_{\ell}(s,x)\overrightarrow{\psi}(s,x)}_{L^{1}_{x}(\mathbb{R})}\\ \nonumber 
&{+} 
\frac{Ke^{{-}\beta\left[\min_{\ell} (v_{\ell}-v_{\ell+1})s+(y_{\ell}-y_{\ell+1})\right]}\norm{\overrightarrow{\psi}(s,x)}_{L^{2}_{x}(\mathbb{R})}}{(t-s)^{\frac{3}{2}}},\\ \label{weightedderivative}
\max_{\ell}\norm{\frac{\partial_{x}\vec{\psi}(t)}{\langle x-v_{\ell}t-y_{\ell} \rangle^{1+\frac{p^{*}-2}{2p^{*}}+\alpha}}}_{L^{2}_{x}(\mathbb{R})}
\leq & \frac{K\max_{\ell}\norm{(1+\vert x-y_{\ell}-v_{\ell}s\vert)\chi_{\ell}(s,x)\langle \partial_{x}\vec{\psi}(s)}_{L^{1}_{x}(\mathbb{R})}^{\frac{2-p}{p}}\norm{\vec{\psi}(s)}_{H^{1}_{x}(\mathbb{R})}^{\frac{2(p-1)}{p}}}{(t-s)^{\frac{3}{2}(\frac{1}{p}-\frac{1}{p^{*}})}} \\
 &{+}K\frac{(s+y_{1}-y_{m})}{(t-s)^{\frac{3}{2}(\frac{1}{p}-\frac{1}{p^{*}})}}\norm{\vec{\psi}(s)}_{W^{1,1}_{x}(\mathbb{R})}^{\frac{2-p}{p}}\norm{\vec{\psi}(s)}_{H^{1}_{x}(\mathbb{R})}^{\frac{2(p-1)}{p}}\\
 & 
 {+}K\frac{e^{{-}\beta\min_{\ell}((v_{\ell}-v_{\ell+1})s+y_{\ell}-y_{\ell+1})}}{(t-s)^{\frac{3}{2}(\frac{1}{p}-\frac{1}{p^{*}})}}\norm{\vec{\psi}(s)}_{L^{2}_{x}(\mathbb{R})}.
\end{align}
\end{theorem}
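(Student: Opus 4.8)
The plan is to reduce Theorem~\ref{decaytheoscatter} to the analysis of a single moving potential after projecting away the discrete modes, and then to patch the local dispersive estimates together using the characteristic functions $\chi_\ell(\tau,x)$. First I would invoke the decomposition from Theorem~\ref{stablecase} (which applies since \eqref{asyorth} is stronger than \eqref{asyorthsub}): a scattering solution $\vec\psi$ is $\mathcal{S}(\vec\phi(t))(t,x)$ up to discrete-mode terms whose coefficients satisfy $(\mathrm{P1})$--$(\mathrm{P4})$. Because we assume \eqref{asyorth} with \emph{all} discrete modes removed (including the zero modes), the coefficients $a_{j,\ell}(t),a^1_{j,\ell}(t)$ attached to each $\mathfrak{G}_\ell(\vec Z_{j,\ell})$ must in fact vanish identically; indeed the asymptotic orthogonality conditions together with the almost-orthogonality of the moving modes (valid once $\min_\ell y_\ell-y_{\ell+1}>L$, exactly as in the uniqueness part of Theorem~\ref{stablecase}) force $\vec\psi(t,x)=\mathcal{S}(\vec\phi_\infty)(t,x)+O_{L^2}(e^{-\beta\min_\ell(y_\ell-y_{\ell+1}+(v_\ell-v_{\ell+1})t)})$ for a fixed $\vec\phi_\infty$. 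So the whole problem is to establish the stated bounds for $\mathcal{S}(\vec\phi_\infty)(t,x)$, and the exponentially small remainder is harmless in every estimate (it only contributes to the $e^{-\beta(\cdots)}\|\vec\psi(s)\|_{L^2}$ terms on the right-hand sides).

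Next I would unpack the definition of $\mathcal{S}$ in \eqref{eq:Sphi}. By construction $\mathcal{S}(\vec\phi)(t,x)$ is a sum of $m$ pieces, the $\ell$-th being a Galilei-boosted, translated copy of $\hat G_{\omega_\ell}$ applied to a free-Schr\"odinger-evolved profile centered near $x=y_\ell+v_\ell t$, minus a single free wave packet built from $\vec\varphi=\sum_{\ell<m}\vec\phi_\ell$. The norm equivalences \eqref{CCC1} for $j\in\{0,1,2\}$ follow from the fact that each $\hat G_{\omega_\ell}$ is (on the essential spectral subspace) equivalent in $H^j$ to a free evolution via Lemma~\ref{leper} together with the reflection/transmission bounds of Lemma~\ref{lem:reftranscoeff}, plus the observation that $\mathcal{S}(t)$ conjugates \eqref{p}; one gets comparability with constants depending only on $\min_\ell(v_\ell-v_{\ell+1})$ because the recursive relations in Definition~\ref{s0def}(b) are uniformly bounded on $L^2$ and $H^j$ once the spatial separations are large (the velocity gap controls how the $e^{-2iy_\ell(k\mp v_{\ell+1}/2)}$ phases decouple the channels). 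The pointwise bound in the first display, $\|\vec\psi(t)\|_{L^\infty}\lesssim (t-s)^{-1/2}\max_\ell\|(1+|x-y_\ell-v_\ell s|)\chi_\ell(s)\vec\psi(s)\|_{L^2}$, is obtained by writing $\vec\psi(t)$ on the time-slice $s$, decomposing via $\sum_\ell\chi_\ell(s,x)$, and for each $\ell$ running the evolution $\mathfrak{G}_\ell$ of that piece: away from its own potential each piece is essentially free, so the standard $1$d free Schr\"odinger dispersive estimate $\|e^{it\partial_x^2}g\|_{L^\infty}\lesssim t^{-1/2}\|xg\|_{L^2}^{1/2}\|g\|_{L^2}^{1/2}$ (or the weighted variant) applies, while near its potential one uses the $L^1\to L^\infty$ bound for $e^{it\mathcal{H}_\ell}P_{e,\ell}$ from the one-potential theory.

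For the sharper estimates \eqref{linfty}--\eqref{weightedderivative} under (H4), the strategy is the same reduction to a single potential, now using that the absence of threshold resonances at $\pm\omega_\ell$ upgrades the one-center dispersive estimate to the clean $t^{-1/2}$ $L^1\to L^\infty$ bound (and the weighted $t^{-3/2}$ bound) for $e^{it\mathcal{H}_\ell}P_{e,\ell}$; these are exactly the estimates proved in \cite{dispa} for a single moving potential, so after Galilei transforming and localizing with $\chi_\ell(s,x)$, I would sum the contributions, controlling cross terms (the portion of the $\ell$-th localized datum that travels into the $\ell'$-th channel) by the exponentially small factor coming from the spatial plus ballistic separation $y_\ell-y_{\ell+1}+(v_\ell-v_{\ell+1})s$, which is precisely the $e^{-\beta\min_\ell(\cdots)}$ loss appearing in each bound. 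The weighted-derivative estimate \eqref{weightedderivative} additionally requires interpolating an $L^p\to L^{p^*}$ dispersive bound ($p$ near $1$) against the $H^1$ conservation from \eqref{CCC1}, and then trading the singular weight $\langle x-v_\ell t-y_\ell\rangle^{-1-\frac{p^*-2}{2p^*}-\alpha}$ for decay of the free propagator; this is routine once the single-potential $L^p$-bounds are in hand. The main obstacle is the bookkeeping of the channel decomposition: showing that the localized pieces $\chi_\ell(s,x)\vec\psi(s,x)$ genuinely evolve like single-potential solutions up to errors that are \emph{uniformly} (in the small velocity gap) exponentially small in $y_\ell-y_{\ell+1}+(v_\ell-v_{\ell+1})s$ — i.e. that the finitely many moving potentials do not conspire to destroy dispersion even when their speeds are close — which is exactly where the refined structure of the distorted Fourier transform and the recursive maps of Definition~\ref{s0def} must be used in place of the large-velocity-separation argument of \cite{dispa}; I expect this propagation-of-localization step, rather than any individual dispersive inequality, to be the crux of the proof.
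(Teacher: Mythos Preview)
Your high-level reduction is correct and matches the paper: a scattering-space solution lies in the stable space, Theorem~\ref{stablecase} applies, and one writes $\vec\psi(t)=\mathcal S(\vec\phi_\infty)(t)+\vec r(t)$ with $\vec r$ exponentially small in $H^1$. One correction: in the decomposition of Theorem~\ref{stablecase} the coefficients $a_{j,\ell}(t),a^1_{j,\ell}(t)$ sit in front of the \emph{raw} moving eigenmodes $e^{i\theta_\ell\sigma_3}\vec Z_{j,\ell}(\cdot-v_\ell t-y_\ell)$, not the $\mathfrak G_\ell$-solutions, and they do not vanish identically; they decay exponentially because \eqref{asyorth} kills the asymptotic constants $a_{j,\ell,\infty},\,a^1_{j,\ell,\infty},\,c_{j,\ell,\infty}$ in (P3)--(P4). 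The upshot, $\vec\psi=\mathcal S(\vec\phi_\infty)+O(e^{-\beta(\cdots)})$, is the same.

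Where you diverge from the paper is in how the $\mathcal S$-estimates themselves are obtained. You propose physical-space channel localization: split $\vec\psi(s)$ by $\sum_\ell\chi_\ell(s,\cdot)$, evolve each piece by a single-potential flow, and treat the cross-channel leakage as the ``propagation-of-localization'' crux. The paper does none of this. The estimates for $\mathcal S$ are proved as a separate result (Theorem~\ref{decaySphi}) entirely on the distorted-Fourier side, using the explicit formula \eqref{eq:Sphi}: apply the one-potential Krieger--Schlag bounds to each $\hat G_{\omega_\ell}$-summand, which controls the output by the frequency-side norms $\|\vec\phi_\ell\|_{\mathcal F L^1}$, $\|\partial_k\vec\phi_\ell\|_{\mathcal F L^1}$, and then close the loop via the \emph{coercivity} $\max_\ell\|\vec\phi_\ell\|\lesssim\|\mathcal S(\vec\phi)(s)\|$ of Proposition~\ref{Coercivityproperty}. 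That coercivity is where the small-velocity-gap issue actually lives, and it is resolved by algebraic structure rather than propagation: for the linear maps of \eqref{form1}/\eqref{form2} one shows that $(\mathrm{Id}-T)^{m-1}$ forces at least one reflection coefficient into every term (Proposition~\ref{RRR}), that the successive shifts inside the resulting products of $r_\ell$'s separate by at least $\tfrac12\min_\ell(v_\ell-v_{\ell+1})$ (Proposition~\ref{mainprop}), and hence that $\|(\mathrm{Id}-T)^{j(m-1)}\|$ decays factorially in $j$ (Lemma~\ref{lem:claimR}), giving invertibility of $T$ on $L^2$ and $\mathcal F L^1$ with constants depending only on $\min_\ell(v_\ell-v_{\ell+1})$. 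Your sketch flags ``the recursive maps of Definition~\ref{s0def}'' as the crux but mislocates the argument: it is a frequency-side iteration/invertibility statement, not a spatial channel-separation estimate, and without it your channel-decomposition plan has no mechanism to bound $\|\vec\phi_\ell\|$ back by $\|\mathcal S(\vec\phi)(s)\|$ uniformly in the velocity gap.
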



\subsection{Scalar models}\label{subsec:scalarmodel}
All our results can be easily extended to scalar charge transfer models in $1$d:
\begin{equation}
i\partial_{t}\psi(t,x)+\partial^{2}_{x}\psi(t,x)+\sum_{\ell=1}^mV_{\ell}(x-y_{\ell}-v_{\ell}t)\psi(t,x)=0
\end{equation}
with initial data $\psi(0,x)\in L^{2}_{x}(\mathbb{R})$. We list the velocities and initial positions as
\begin{equation}
v_{1}>v_{2}>...>v_{m}\,\,\text{and}\,\,\, y_{1}>y_{2}>...>y_{m}.
\end{equation}
We consider a set  $(V_{\ell}(x))_{\ell\in [m]}$ real potentials that satisfying the following conditions.
\begin{itemize}
    \item [(C1)] There exist constants $C\in\mathbb{R}_{+}$ and $\epsilon\in (0,1)$ satisfying $\max_{\ell}\vert V_{\ell}(x) \vert\leq \frac{C}{(1+x^{2})^{{-}1-\epsilon}}$ for all $x\in\mathbb{R}.$
    \item [(C2)] Each operator $H_{\ell}\coloneqq{-}\partial^{2}_{x}+V_{\ell}(x)$ has $M_\ell$ negative eigenvalues
    \begin{equation}
        \sigma_d(H_\ell)=\{\lambda_{j,\ell}\}_{j=1}^{M_\ell}
    \end{equation}with corresponding eigenfucntions $Z_{j,\ell}$
    \begin{equation}
      H_\ell Z_{j,\ell}= \lambda_{j,\ell} Z_{j,\ell}
    \end{equation}which form an orthonormal basis. We assume that zero is not an eigenvalue.
\end{itemize}
The conditions $(\mathrm{C}1)$ and $(\mathrm{C}2)$ above imply that the continuous spectrum of $H_\ell$ is given by $\sigma_c(H_\ell)=[0,\infty)$ and  each operator $H_{\ell}$ has a distorted Fourier basis of generalized eigenfunctions $e_{\ell}(x,k)\in L^{\infty}_{x}(\mathbb{R})$ satisfying
\begin{equation}\label{e(xk)scalar}
    H_{\ell} e_{\ell}(x,k)=k^{2} e_{\ell}(x,k) \text{, for any $k\in \mathbb{R}.$}
\end{equation}
Moreover, it is well-known that the generalized eigenfunctions $e_{\ell}(x,k)$ generate the projection of $L^{2}_{x}(\mathbb{R})$ onto the continuous spectrum of $H_{\ell},$ see the book \cite{Yafaev} for more details. The function $e_\ell(x,k)$ satisfies the following asymptotic behavior
\begin{align}\label{asyee}
    e_{\ell}(x,k)=&\begin{cases}
       e^{ik x}\left[ \frac{s_{\ell}(k)}{\sqrt{2\pi}}+O\left(\langle x \rangle^{{-}2}\right)\right]+e^{{-}ik x}O\left(\langle x \rangle^{{-}2}\right)\text{, if $x>0,$}\\
        \frac{e^{ik x}}{\sqrt{2\pi}}+\frac{r_{\ell}(k)e^{{-}ik x}}{\sqrt{2\pi}}+\sum_{\pm}e^{\pm ik x}O\left(\langle x \rangle^{{-}2}\right) \text{, if $x<0$}
    \end{cases} \text{, when $k>0,$}\\ \nonumber
    e_{\ell}(x,k)=&\begin{cases}
        \frac{s_{\ell}({-}k)e^{ik x}}{\sqrt{2\pi}}+\sum_{\pm}e^{\pm ik x}O\left(\langle x \rangle^{{-}2}\right)\text{, if $x<0,$}\\
        \frac{e^{ik x}}{\sqrt{2\pi}}+\frac{r_{\ell}({-}k)e^{{-}ik x}}{\sqrt{2\pi}}+\sum_{\pm}e^{\pm ik x}O\left(\langle x \rangle^{{-}2}\right) \text{, if $x>0$}
    \end{cases} \text{, when $k<0,$}
\end{align}
see, for example, \cite{DeiTru,WSSurvey,Wederwave,Yafaev}   for references.
\par Denote the projection onto the continuous spectrum of $H_\ell$ as $P_{c,\ell}$.

More precisely,
any function $f(x)\in L^{2}_{x}(\mathbb{R})$ of $\Ra P_{c,\ell}$ of $H_{\ell}$ has a unique eigenfunction expansion of the form
\begin{equation*}
    f(x)=\int_{\mathbb{R}} e(x,k)g(k)\,dk,
\end{equation*}
for $g(k)\in L^{2}_{k}(\mathbb{R}).$ In particular, the function $g$ satisfies
\begin{equation*}
    g(k)=\langle f(x),e_{\ell}(x,k) \rangle \text{, for all $k\in \mathbb{R}.$}
\end{equation*}
As a consequence, if $f\in \Ra {c,\ell}$ of $H_{\ell},$ we can verify that $e^{itH_\ell}f\in L^{2}_{x}(\mathbb{R})$ satisfies for all $t\in\mathbb{R}$
\begin{align}\label{evoldistFourier}
    e^{itH_{\ell}}f(x)=&\int_{\mathbb{R}} e^{itk^{2}} e_{\ell}(x,k)\langle f(\Diamond),e_{\ell}(\Diamond,k) \rangle\,dk.
\end{align}
From now on, considering the functions $\Tilde{G}_{\ell}$ and $\Tilde{F}_{\ell}$ below
\begin{align*}
    \Tilde{G}_{\ell}(x,k)\coloneqq & \frac{e_{\ell}(x,k)1_{({-}\infty,0]}(k)}{s({-}k)}+\left[e_{\ell}(x,k)-\frac{r_{\ell}(k)e_{\ell}(x,{-}k)}{s_{\ell}(k)}\right]1_{[0,{+}\infty)}(k) ,\\
    \tilde{F}_{\ell}(x,k)\coloneqq & \frac{e_{\ell}(x,k)1_{[0,{+}\infty)}(k)}{s_{\ell}(k)}+\left[e_{\ell}(x,k)-\frac{r_{\ell}({-}k)e_{\ell}(x,{-}k)}{s_{\ell}({-}k)}\right]1_{({-}\infty,0]}(k),
\end{align*}
we define the linear operators $\hat{\Tilde{G}}_{\ell}:D_{1}\subset L^{2}_{k}(\mathbb{R})\to L^{2}_{x}(\mathbb{R})$ and $\hat{\tilde{F}}_{\ell}:D_{2}\subset L^{2}_{k}(\mathbb{R})\to L^{2}_{x}(\mathbb{R})$ by
\begin{align*}
    \hat{\Tilde{G}}_{\ell}(\phi)(x)=&\int_{\mathbb{R}}\Tilde{G}_{\ell}(x,k)\phi(k)\,dk,\\
    \hat{\Tilde{F}}_{\ell}(\phi)=&\int_{\mathbb{R}}\Tilde{F}_{\ell}(x,k)\phi(k)\,dk.
\end{align*}
In particular, the asymptotic behavior \eqref{asyee} of $e_{\ell}(x,k)$ and the  Plancherel identity imply that there exist dense domains $D_{1}$ and $D_{2}$ of $L^{2}_{x}(\mathbb{R})$ for $\hat{\Tilde{G}}_{\ell}$ and $\hat{\Tilde{F}}_{\ell}.$ Moreover,  the Plancherel identity also implies that
\begin{align}\label{-inftygell}
    \hat{\Tilde{G}}_{\ell}(\phi)(x)=& \frac{1}{\sqrt{2\pi}}\int_{\mathbb{R}}e^{ikx}\phi(k)\,dk +O\left(\langle x \rangle^{{-}2}\norm{\phi}_{L^{2}_{x}(\mathbb{R})}\right) \text{, when $x<0,$}\\ \label{+inftyfell}
    \hat{\Tilde{F}}_{\ell}(\phi)(x)=& \frac{1}{\sqrt{2\pi}}\int_{\mathbb{R}}e^{ikx}\phi(k)\,dk +O\left(\langle x \rangle^{{-}2}\norm{\phi}_{L^{2}_{x}(\mathbb{R})}\right) \text{, when $x>0.$}
\end{align}
Similarly to Definition \ref{s0def}, there exists a subspace $D$ of $L^{2}_{k}(\mathbb{R})$ and a linear operator $\Tilde{S}(t):D\subset L^{2}_{k}(\mathbb{R})\to L^{2}_{k}(\mathbb{R})$ defined by
\begin{align}\label{Sscalar}
   \Tilde{S}(\vec{\phi})(t,x):= & \sum_{\ell=1}^{m}e^{i\left(\frac{v_{\ell}x}{2}-\frac{v_{\ell}^{2}t}{4}\right)}\hat{\Tilde{G}}_{\ell}\left(
   e^{{-}itk^{2}} 
e^{iy_{\ell}k}\phi_{\ell}\left(k+\frac{v_{\ell}}{2}\right)\right)(x-y_{\ell}-v_{\ell}t)\\
    & {-}\frac{1}{\sqrt{2 \pi}}\int_{\mathbb{R}}e^{{-}it k^{2}}
\varphi(k)e^{ikx}\,dk,\nonumber
\end{align}
such that the sequence of functions $\{\phi_{\ell}\}_{\ell\in [m]}$ satisfies
\begin{align}\label{ini1}
    \phi_{1}(k)=&\phi(k),\\ \label{iniell}
    \phi_{\ell+1}(k)=&\frac{\phi_{\ell}(k)-r_{\ell}\left(k-\frac{v_{\ell}}{2}\right)e^{{-}i2y_{\ell}(k-\frac{v_{\ell+1}}{2})+iy_{\ell}(v_{\ell}-v_{\ell+1})}\phi_{\ell}({-}k+v_{\ell})}{s_{\ell}\left(k-\frac{v_{\ell}}{2}\right)} \text{, if $1\leq  \ell\leq m-1$}\\ \label{varphi}
    \varphi(k)=& \sum_{\ell=1}^{m-1}\phi_{\ell}(k).
\end{align}
Consequently, similarly to the proof of Theorem \ref{stablecase}, we can deduce the following theorem.
\begin{theorem}
 Assume that all the operators $H_{\ell}$ satisfy $(\mathrm{C}_{1})$ and $(\mathrm{C}_{2}).$ Let $(v_{\ell})_{\ell\in [m]}$ be a sequence of real values satisfying $\min_{\ell}v_{\ell}-v_{\ell+1}>0.$ There exists $L>1$ depending on $\min_{\ell\in [m-1]}v_{\ell}-v_{\ell+1}$ and on the potentials $V_{\ell}$ such that if the real set $(y_{\ell})_{\ell\in [m]}$ satisfies
 \begin{equation*}
     \min_{\ell\in [m-1]} y_{\ell}-y_{\ell+1}>L,
 \end{equation*}
then any strong solution $\psi(t,x)\in L^{2}_{x}(\mathbb{R})$ of the linear Schrödinger equation 
\begin{align*}
i\partial_{t}\psi(t,x)+\partial^{2}_{x}\psi(t,x)+\sum_{\ell}V_{\ell}(x-y_{\ell}-v_{\ell}t)\psi(t,x)=&0,
\psi(0,x)\in L^{2}_{x}(\mathbb{R})
\end{align*}
has a unique representation of the form 
\begin{align*}
 \psi(t,x)&=\Tilde{T}
\left(\phi_{0}\right)(t,x) +\sum_{\ell=1}^{m}\sum_{j=1}^{M_{\ell}}a_{j,\ell}\mathfrak{G}_{\ell}(Z_{j,\ell})(t,x),
\end{align*}
for some $\phi_{0}(x)$ in the domain of $\mathcal{\Tilde{S}}$, and some complex numbers $a_{j,\ell}$ and $a_{j,\ell}^1$. In the decomposition above,
\begin{itemize}
    \item $\Tilde{T}
\left(\phi_{0}\right)(t,x)$ is the unique solution of \eqref{p} satisfying
\begin{equation}\label{tphidecayyscalar}
    \lim_{t\to{+}\infty}\norm{\Tilde{T}
(\phi_{0})(t,x)-\Tilde{S}
(\phi_{0})(t,x)}_{L^{2}_{x}(\mathbb{R})}=0;
\end{equation}
\item For $H_{\ell}Z_{j,\ell}=\lambda_{j,\ell}Z_{j,\ell},$ the function $\mathfrak{G}_{\ell}(Z_{j,\ell})(t,x)$ is the unique solution of \eqref{p} satisfying
\begin{equation}\label{tg1scalar}
    \lim_{t\to{+}\infty}\norm{\mathfrak{G}_{\ell}(Z_{j,\ell})(t,x)-e^{{-}it\lambda_{j,\ell}}e^{i\left(\frac{v_{\ell}x}{2}-\frac{v_{\ell}^{2}t}{4}\right)}Z_{j,\ell}(x-v_{\ell}t-y_{\ell})}_{L^{2}_{x}(\mathbb{R})}=0.
\end{equation}
\end{itemize}
Moreover, the scattering part $\Tilde{T}(\phi_0)$ satisfies the same estimates as in \eqref{CCC1}. Under additional assumption that
\begin{itemize}
\item [(C4)] $0$ is not a resonance of $H_\ell$, \footnote{Recall that one says that $0$ is a  resonance of $H_\ell$ provided that $H_{\ell}f=0$ has a solution $f\in L^\infty\, \text{but} \,f\notin L^2$.}
\end{itemize}
then the scattering part $\Tilde{T}(\phi_0)$  also satisfies same estimates as \eqref{linfty}, \eqref{weightedlinfty}, \eqref{weightedderivative}.
\end{theorem}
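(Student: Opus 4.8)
The plan is to reduce the scalar model to the matrix framework already developed for \eqref{p}, and then simply transfer the conclusions. First I would observe that every scalar operator $H_{\ell}={-}\partial_x^2+V_{\ell}$ can be realized as the upper-left block of a matrix operator $\mathcal{H}_{\ell}$ as in \eqref{eq:H0V} by taking $W_{\ell}\equiv 0$ and $\omega_{\ell}\equiv 0$ (or, to stay strictly inside the hypotheses with $\omega_\ell>0$, by conjugating with the gauge $e^{i\omega_\ell t}$ and then sending $\omega_\ell\to 0$, since none of the estimates degenerate as $\omega_\ell\downarrow 0$ once $0$ is not a resonance). With $W_\ell=0$, the matrix Schr\"odinger equation decouples into two conjugate scalar equations, and the first component is exactly the scalar charge transfer equation. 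Under this identification the distorted Fourier data match up: the generalized eigenfunctions $\mathcal{F}_\ell,\mathcal{G}_\ell$ reduce to $(e_\ell(x,\pm k),0)$, the reflection and transmission coefficients $r_\ell,s_\ell$ coincide (which is why the same symbols were used in \eqref{asyee} and \eqref{decayrs1}), and the operators $\hat{\Tilde G}_\ell,\hat{\Tilde F}_\ell,\Tilde S(t)$ in \eqref{Sscalar}--\eqref{varphi} are precisely the scalar restrictions of $\hat G_{\omega_\ell},\hat F_{\omega_\ell},\mathcal{S}(t)$ from \eqref{Ghat}, \eqref{Fhat}, \eqref{eq:Sphi}. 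The recursion \eqref{ini1}--\eqref{iniell} is the $W=0$ specialization of condition b) in Definition \ref{s0def}, and Lemmas \ref{lem:reftranscoeff}, \ref{leper} apply verbatim.

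Next I would run the decomposition argument. Once the dictionary above is in place, Theorem \ref{stablecase} (equivalently its stable-model corollary Theorem \ref{stablemodel}) applies to the matrix lift: since all eigenvalues $\lambda_{j,\ell}$ of $H_\ell$ are negative (so for the lift they are real and, importantly, there are no unstable modes, $M_\ell=0$ in the notation of (H2)), the center-stable decomposition \eqref{princ11stablemodel} holds with no exponentially growing discrete channels, and projecting onto the first component yields the claimed representation $\psi=\Tilde T(\phi_0)+\sum_{\ell,j}a_{j,\ell}\mathfrak{G}_\ell(Z_{j,\ell})$. Uniqueness follows from the uniqueness in Theorem \ref{stablecase} plus the closedness of $\Ra\Tilde S(\Diamond)(0,x)$, which is the scalar shadow of Proposition \ref{princ}. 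The asymptotic characterizations \eqref{tphidecayyscalar}, \eqref{tg1scalar} are the first components of \eqref{tphidecayy}, \eqref{tg1}; note $\vec Z^1$-type modes are absent because $0\notin\sigma_d(H_\ell)$, which also removes the $(\mathrm{H}3)$-related terms.

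Then I would transfer the dispersive estimates. The bound \eqref{CCC1} for $\Tilde T(\phi_0)$ is immediate from the matrix version in Theorem \ref{decaytheoscatter}, since the scattering-space conditions \eqref{asyorth} reduce to the single scalar orthogonality (no $\vec Z^1$ condition needed). For the $L^\infty$ and weighted estimates \eqref{linfty}, \eqref{weightedlinfty}, \eqref{weightedderivative} one needs hypothesis (H4), i.e. that $\pm\omega_\ell$ are not resonances; under the $\omega_\ell\to 0$ identification this is exactly (C4), the assumption that $0$ is not a resonance of $H_\ell$. With (C4) in force, the matrix estimates of Theorem \ref{decaytheoscatter} apply and, restricted to the first component, give the stated scalar decay. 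The only point requiring a little care is the limit $\omega_\ell\downarrow 0$: one must check that the constants in Lemma \ref{lem:reftranscoeff} and in the dispersive estimates remain uniform, which they do because the low-energy expansion of the Jost solutions is governed precisely by the absence of a zero resonance, exactly as encoded in (C4). \textbf{The main obstacle} is this last uniformity-as-$\omega_\ell\to 0$ check (or, if one prefers to avoid it entirely, redoing the distorted-Fourier and stationary-phase analysis of Sections \ref{sec:linearmaps}--\ref{sec:stabscat} directly at $\omega=0$); everything else is a mechanical specialization of the matrix results, with $W=0$ collapsing the $2\times 2$ structure and $0\notin\sigma_d(H_\ell)$ deleting the generalized-kernel bookkeeping.
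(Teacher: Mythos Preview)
The paper's own proof is a single sentence: ``The proof is analogous to the proofs of Theorem~\ref{stablecase} and Theorem~\ref{decaytheoscatter}.'' In other words, the intended argument is precisely your parenthetical alternative at the very end --- rerun Sections~\ref{sec:linearmaps}--\ref{sec:stabscat} directly in the scalar setting, using the scalar distorted Fourier basis $e_\ell(x,k)$, the maps $\hat{\tilde G}_\ell,\hat{\tilde F}_\ell$, and the scalar dispersive map $\tilde S$ that the subsection already set up. Your \emph{primary} route (embed into the matrix model and quote the matrix theorems as black boxes) differs from the paper and has two genuine gaps.

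First, the $\omega_\ell\to 0$ limit is both unnecessary and misguided. With $W_\ell\equiv 0$ the matrix PDE \eqref{p} decouples and its first component is the scalar equation \emph{regardless of the value of $\omega_\ell$}; the parameter $\omega_\ell$ enters only through the auxiliary operator $\mathcal H_\ell=\mathrm{diag}(H_\ell+\omega_\ell,-(H_\ell+\omega_\ell))$ used in the spectral analysis. So you should take $\omega_\ell>\max_j|\lambda_{j,\ell}|$ fixed, which places all discrete eigenvalues of $\mathcal H_\ell$ in $(-\omega_\ell,\omega_\ell)\setminus\{0\}$ with $M_\ell=0$, and then (H4) becomes exactly (C4) since a threshold resonance of $\mathcal H_\ell$ at $\pm\omega_\ell$ is a zero-energy resonance of $H_\ell$. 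No limiting argument is needed; the ``main obstacle'' you flagged (uniformity as $\omega_\ell\downarrow 0$) is a self-inflicted one, and at $\omega_\ell=0$ the spectral gap closes and the Jost-function framework of the cited references no longer applies as stated.

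Second, even with the right $\omega_\ell$, you cannot simply invoke Theorems~\ref{stablecase} and~\ref{decaytheoscatter} as black boxes: the standing assumptions of \S\ref{subsub:assumption} require exponentially decaying \emph{even} potentials \eqref{decV}, whereas (C1) gives only polynomial decay and no symmetry, and (H3) requires $0\in\sigma_d\mathcal H_\ell$, which your diagonal lift does \emph{not} satisfy (it would force $-\omega_\ell$ to be an eigenvalue of $H_\ell$). The (H3) mismatch is harmless --- it only deletes the $\vec Z^1$ bookkeeping, as you note --- but the decay/symmetry mismatch means you must reopen the proofs (Lemma~\ref{lem:reftranscoeff}, the Jost asymptotics \eqref{asy1}--\eqref{asy4}, and the Krieger--Schlag dispersive inputs) and replace them by their scalar counterparts valid under (C1), (C2), (C4). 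Once you do that, you are effectively carrying out the paper's ``analogous'' proof rather than an embedding.
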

\begin{proof}
    The proof is analogous to the proofs of Theorem \ref{stablecase} and Theorem \ref{decaytheoscatter}.
\end{proof}

\section{Structures of linear maps}\label{sec:linearmaps}
In this section, we study linear maps of two special forms which are crucial for us to establish the invertibility of dispersive map $\mathcal{S}$ and dispersive estimates later on.

Given the collection of velocities \eqref{eq:velocity}, reflection coefficients and transmission coefficients as in Lemma \ref{lem:reftranscoeff}, we consider the following two classes of linear operators $$T=(T_{1},\,...,T_{2m-2}):L^{2}_{k}(\mathbb{R},\mathbb{C}^{2m-2})\to L^{2}_{k}(\mathbb{R},\mathbb{C}^{2m-2}).$$ 

The first one is given by the following form
\begin{align}\label{form1}\tag{Form 1}
     T_{1}(\vec{g})=&g_{1}(k)-r_{2}\left({-}k-\frac{v_{2}}{2}\right)g_{2}\left({-}k-v_{2}\right)-s_{2}\left({-}k-\frac{v_{2}}{2}\right)g_{3}(k),\\ \nonumber
     T_{2}(\vec{g})=&g_{2}(k)-r_{1}\left(k+\frac{v_{1}}{2}\right)g_{1}({-}k-v_{1}),\\ \nonumber
      T_{3}(\vec{g})=& g_{3}(k)-r_{3}\left({-}k-\frac{v_{3}}{2}\right)g_{4}({-}k-v_{3})-s_{3}\left(k-\frac{v_{3}}{2}\right)g_{5}(k),\\ \nonumber
      T_{4}(\vec{g})=& g_{4}(k){-}r_{2}\left(k+\frac{v_{2}}{2}\right)g_{3}\left({-}k-v_{2}\right)-s_{2}\left(k+\frac{v_{2}}{2}\right)g_{2}\left(k\right),\, ...,\\ \nonumber
      T_{2\ell}(\vec{g})=& g_{2\ell}(k)-r_{\ell}\left(k+\frac{v_{\ell}}{2}\right)g_{2\ell-1}\left({-}k-v_{\ell}\right)-s_{\ell}\left(k+\frac{v_{\ell}}{2}\right)g_{2\ell-2}(k),\\ \nonumber
    T_{2\ell+1}(\vec{g})=&g_{2\ell+1}(k)-r_{\ell+2}\left({-}k-\frac{v_{\ell+2}}{2}\right)g_{2\ell+2}\left({-}k-v_{\ell+2}\right)\\&{-}s_{\ell+2}\left({-}k-\frac{v_{\ell+2}}{2}\right)g_{2\ell+3}(k),\, ...,\\ \nonumber
      T_{2m-3}(\vec{g})=& g_{2m-3}(k)-r_{m}\left({-}k-\frac{v_{m}}{2}\right)g_{2m-2}\left({-}k-v_{m}\right),\\ \nonumber
      T_{2m-2}(\vec{g})=& g_{2m-2}\left(k\right){-}r_{m-1}\left(k+\frac{v_{m-1}}{2}\right)g_{2m-3}\left({-}k-v_{m-1}\right)\\ \nonumber
      &{-}s_{m-1}\left(k+\frac{v_{m-1}}{2}\right)g_{2m-4}(k).\end{align}
The second one is given by  the form:
 \begin{align}\label{form2}\tag{Form 2}
T_{1}(\vec{g})=&g_{1}(k)-  r_{1}\left({-}k-\frac{v_{1}}{2}\right)g_{2}\left({-}k+v_{1}\right),\\ \nonumber
T_{2}(\vec{g})=&g_{2}(k)-r_{2}\left(k-\frac{v_{2}}{2}\right)g_{1}\left({-}k+v_{2}\right)-s_{2}\left(k-\frac{v_{2}}{2}\right)g_{4}\left(k\right), ...,\\ \nonumber
T_{2n-1}(\vec{g})=&g_{2n-1}(k)- r_{n}\left({-}k+\frac{v_{n}}{2}\right)g_{2n}\left({-}k+v_{n}\right){-}s_{n}\left({-}k+\frac{v_{n}}{2}\right)g_{2n-3}\left(k\right),\\ \nonumber
T_{2n}(\vec{g})=&g_{2n}(k)-r_{n+1}\left(k-\frac{v_{n+1}}{2}\right)g_{2n-1}\left({-}k+v_{n+1}\right)\\
&{-}s_{n+1}\left(k-\frac{v_{n+1}}{2}\right)g_{2n-1}\left({-}k+v_{n+1}\right)g_{2n+2}\left(k\right),...,\\ \nonumber
T_{2m-2}(\vec{g})=&g_{2m-2}(k)-r_{m}\left(k-\frac{v_{m}}{2}\right)g_{2m-3}\left({-}k+v_{m}\right).
\end{align}
\par Next, for each operator $T$ above, let $R=\mathrm{Id}-T,$ we have the following crucial estimates for $R$.
\begin{lemma}\label{lem:claimR}
Given $R=\mathrm{Id}-T$, it satisfies that for $j\in\mathbb{N}$,
\begin{equation}\label{claimR}
    \norm{R^{j(m-1)}}_{L^{2}\to L^{2}}+\norm{R^{j(m-1)}}_{\mathcal{F}L^{1}\to \mathcal{F}L^{1}}\leq \min\left(\frac{jmC(m)^{j}}{j!},\frac{jmC(m)^{j}}{(\lfloor\frac{j-3}{2}\rfloor!)^{2}}\right).
\end{equation}
In particular, $T:L^{2}_{k}(\mathbb{R},\mathbb{C}^{2m-2})\to (\mathbb{R},\mathbb{C}^{2m-2})$ and $T:(\mathcal{F}L^{1})^{2m-2}\to(\mathcal{F}L^{1})^{2m-2}$ are linear homeomorphisms.
\end{lemma}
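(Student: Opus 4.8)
The plan is to exploit the fact that each $R=\mathrm{Id}-T$ is, modulo a bounded ``variable-shift'' operation, a matrix whose entries are multiplication by the coefficients $r_\ell$ and $s_\ell$, and the decay information in Lemma~\ref{lem:reftranscoeff} should play the role of smallness only \emph{after} one tracks how the velocity shifts $k\mapsto -k+v_\ell$, $k\mapsto -k-v_\ell$ compose. The crucial structural observation to extract from the explicit \eqref{form1} and \eqref{form2} is that $R$ has a ``band'' structure: the $j$-th component of $R\vec g$ depends only on the components $g_{j-1},g_{j\pm 1},g_{j+2}$ (with the precise pattern dictated by the two forms), so a word of length $N$ in $R$ — i.e. an entry of $R^N$ — is a sum over length-$N$ paths in a path graph on $2m-2$ vertices, and to travel a net distance one must make roughly that many steps. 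This is exactly what produces the two competing bounds: a path of length $N=j(m-1)$ either visits few distinct coefficients and then must backtrack many times, or it sweeps across all $2m-2$ indices (a ``full traversal'') and each full traversal forces passing through an $s_\ell$-type edge whose symbol is $1-s_\ell(\Diamond)+s_\ell(\Diamond)$; isolating the $1-s_\ell$ part (small by \eqref{decayrs1}) versus carrying the $s_\ell\approx 1$ part is what gives the factor $C(m)^j$, and the combinatorial count of such paths yields the factorial denominators $1/j!$ and $1/(\lfloor (j-3)/2\rfloor!)^2$.

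The steps, in order, would be: (1) Fix the operator $T$ (of either form) and write $R=\mathrm{Id}-T=\sum_{\text{edges }e}\,M_e\circ S_e$, where for each edge $e$ in the band pattern, $S_e$ is a composition $g\mapsto g(-\Diamond\pm v_\ell)$ together with the relevant index permutation, and $M_e$ is multiplication by one of $r_\ell(\Diamond\pm v_\ell/2)$, $s_\ell(\Diamond\pm v_\ell/2)$. Observe that each $S_e$ is an isometry on $L^2_k$ and on $\mathcal{F}L^1$ (reflections and translations preserve both norms), so the only growth comes from the multipliers $M_e$. (2) Bound the multiplier operators: by \eqref{decayrs1}, $\|r_\ell\|_{L^\infty}+\|r_\ell\|_{\mathcal{F}L^1}\le C$ and, more importantly, $s_\ell=1-(1-s_\ell)$ with $\|1-s_\ell\|_{L^\infty}+\|1-s_\ell\|_{\mathcal{F}L^1}\le C$ as well — so every individual edge contributes a bounded factor, uniformly in $\ell$, hence a crude bound $\|R\|\le C(m)$. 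That alone gives homeomorphism only for $m=1$ or when one already has smallness; the point is to improve it on the power $R^{j(m-1)}$. (3) Expand $R^{j(m-1)}$ as a sum over admissible index-words $w=(w_0,w_1,\dots,w_{j(m-1)})$ with $|w_{i+1}-w_i|\le 2$ in the pattern of the chosen form, and assign to each word the operator $\prod_i (M_{e_i}\circ S_{e_i})$, whose norm is at most $\prod_i \|M_{e_i}\|$. (4) Two accountings of the sum: (a) the \emph{traversal count} — each time the word $w$ sweeps the full range $\{1,\dots,2m-2\}$ it must use at least one ``$s$-edge'', and decomposing $s_\ell=1+(s_\ell-1)$ across these forces, in the term where $j$ of the $s$-edges are replaced by their small part $s_\ell-1$, a gain; counting the number of ways to perform $j(m-1)$ steps that realize $j$ full traversals is dominated by $j\,m\,C(m)^j/j!$ after Stirling; (b) the \emph{localization count} — restricting instead to words that stay near a fixed index and estimating the number of such confined paths of length $j(m-1)$ that nonetheless decorrelate gives the $1/(\lfloor (j-3)/2\rfloor!)^2$ denominator via a reflection/ballot-type count for paths that go out and come back. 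Taking the minimum of the two bounds yields \eqref{claimR}.

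Once \eqref{claimR} is in hand, invertibility is immediate and quantitative: since $\|R^{j(m-1)}\|\to 0$ as $j\to\infty$ (both upper bounds go to $0$), there is $j_0$ with $\|R^{j_0(m-1)}\|<1$, hence $\mathrm{Id}-R^{j_0(m-1)}$ is invertible by Neumann series; but $\mathrm{Id}-R^{j_0(m-1)}=(\mathrm{Id}-R)\bigl(\sum_{i=0}^{j_0(m-1)-1}R^i\bigr)=T\bigl(\sum_{i=0}^{j_0(m-1)-1}R^i\bigr)$ and the two factors commute, so $T$ has both a left and a right bounded inverse, namely $T^{-1}=\bigl(\sum_{i=0}^{j_0(m-1)-1}R^i\bigr)(\mathrm{Id}-R^{j_0(m-1)})^{-1}$, and this works identically on $L^2_k(\mathbb{R},\mathbb{C}^{2m-2})$ and on $(\mathcal{F}L^1)^{2m-2}$ because all the estimates above were proved in both norms simultaneously.

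\textbf{Main obstacle.} The analytic inputs (isometry of the shifts, boundedness from \eqref{decayrs1}, Neumann series at the end) are routine; the real difficulty is the bookkeeping in step~(4) — making precise the claim that a path of length $j(m-1)$ in the band graph either forces $\gtrsim j$ visits to ``$s$-edges'' (so that selecting the small component $1-s_\ell$ on each gives the $C(m)^j/j!$ bound) or is so confined that it must oscillate $\gtrsim j$ times, producing the squared-factorial denominator. Getting the constants $C(m)$, the ``$jm$'' prefactor, and especially the floor $\lfloor (j-3)/2\rfloor$ to come out exactly as stated will require a careful, slightly delicate combinatorial argument (a generating-function or explicit path-counting estimate), and handling \eqref{form2} — whose pattern differs from \eqref{form1}, and which as written contains what looks like a typo in $T_{2n}$ (the repeated factor $g_{2n-1}(-k+v_{n+1})$) — uniformly with \eqref{form1} is where most of the care will go.
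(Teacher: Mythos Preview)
Your proposal has a genuine gap: the mechanism you describe cannot produce the factorial denominators in \eqref{claimR}. You correctly observe that each multiplier $M_e$ (an $r_\ell$ or $s_\ell$) is bounded in operator norm on $L^2$ and $\mathcal{F}L^1$, and that the shifts $S_e$ are isometries. But then a word of length $j(m-1)$ contributes an operator of norm at most $C^{j(m-1)}$, and there are $\lesssim d^{j(m-1)}$ such words (with $d$ the maximal degree of your band graph). Neither the decomposition $s_\ell=1+(s_\ell-1)$ nor any path-counting restriction can turn this into $C(m)^j/j!$: the piece $1-s_\ell$ is \emph{not} small in operator norm (only $|1-s_\ell(k)|\lesssim(1+|k|)^{-1}$ pointwise, which is $\|\cdot\|_{L^\infty}=O(1)$), and combinatorial constraints on paths in a bounded-degree graph reduce the count by at most a polynomial factor, never a factorial. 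Your ``traversal count'' and ``localization count'' are both too weak for this reason.

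The paper's proof rests on a different idea that you are missing entirely: the factorial comes from a \emph{pointwise} estimate on products of reflection coefficients with well-separated shifts. Two structural facts are established first (Propositions \ref{mainprop} and \ref{RRR}): (i) after $(m-1)$ applications of $R$, every surviving term contains at least one reflection coefficient $r_{j_\ell}$; hence after $n(m-1)$ applications each term carries at least $n$ factors $r_{j_\ell}(\pm k+v_{\ell,i})$; and (ii) the shifts $v_{\ell,i}$ appearing in any single term are \emph{monotone} with gaps $\geq\tfrac12\min_h(v_h-v_{h+1})>0$. This is where the velocity separation, which never appears in your outline, enters. The analytic input is then Lemma~\ref{claimfactorial}: for well-separated $q_1>q_2>\cdots>q_M$,
\[
\sup_{k\in\mathbb{R}}\prod_{j=1}^M\frac{1}{1+|k+q_j|}\ \lesssim\ \frac{1}{(\lfloor\tfrac{M-2}{2}\rfloor!)^2\,(\min_\ell(q_\ell-q_{\ell+1}))^{M-2}},
\]
because for any fixed $k$ at most two of the factors can be near their maximum, and the rest are at distances $\gtrsim 1,2,3,\ldots$ from their centers. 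Since $|r_\ell(k)|\lesssim(1+|k|)^{-1}$ and $|s_\ell|\leq 1$, each term of $R^{n(m-1)}\vec g$ is therefore bounded in $L^2$ (and, with one derivative spent, in $\mathcal{F}L^1$) by the right-hand side above times $\|\vec g\|$; summing over at most $2^{n(m-1)}$ terms gives \eqref{claimR}. Your Neumann-series conclusion for the invertibility of $T$ is fine once \eqref{claimR} is available, but the route you propose to \eqref{claimR} does not work.
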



The proof of Lemma \ref{lem:claimR} will require the estimate proved in the following Lemma.

\begin{lemma}\label{factUpp}
Let $m\in \mathbb{N}_{\geq 2},$ and $\{(r_{\ell},\,s_{\ell})\}_{\ell\in\mathbb{N}}$ be a set such that each element $(r_{\ell},s_{\ell})\in C^{2}(\mathbb{R},\mathbb{C})$ satisfies for a constant $C>1$
\begin{align}\label{decrs}
     \max_{\ell\in [m],j\in\{0,1\}}\left\vert \frac{d^{j}r_{\ell}(k)}{dk^{j}} \right\vert+ \left\vert \frac{d^{j}}{dk^{j}}\left(1-s_{\ell}(k)\right) \right\vert\leq & \frac{C}{(1+\vert k\vert)^{j+1}} \text{, for all $k\in\mathbb{R},$}\\ \label{rs1}
     \max\left(\vert r_{\ell}(k) \vert, \vert s_{\ell}(k)\vert\right) \leq 1 \text{, for all $\ell$ and $k\in\mathbb{R}.$}
\end{align}

Let $\{q_{j}\}_{j\in\{1,\,...,\,M\}}$ for a $M\in\mathbb{N}_{\geq 2}$ be a set of real numbers satisfying $\min q_{j}-q_{j+1}>0.$ There exists $K>1$ such that if $\{w_{1},\,...,\,w_{M(m-1)}\}$ is any subset of $\mathbb{R}$ of size $M(m-1),$ then, the following inequalities holds for all $y\in\mathbb{R},$
 for any $f(k)\in L^{2}_{k}(\mathbb{R}),$
\begin{align}\label{ineq1}
    \norm{e^{iky}[\prod_{n=1}^{M(m-1)}s_{n}(k+w_{n})][\prod_{j=1}^{M}r_{j}(k+q_{j})]f(k)}_{L^{2}_{k}(\mathbb{R})}\leq & \max_{h\in\{1,2\}}\frac{C^{M}\norm{f(k)}_{L^{2}_{k}(\mathbb{R})}}{(\lfloor\frac{M-2}{2}\rfloor!)^{2}[\min_{\ell}(q_{\ell}-q_{\ell+1})^{M-h}]},
 \end{align}
and  for any $f(k)\in \mathcal{F}L^{1},$
 \begin{align}
    \label{ineq2}
    \norm{e^{iky}[\prod_{n=1}^{M(m-1)}s_{n}(k+w_{n})][\prod_{j=1}^{M}r_{j}(k+q_{j})]f(k)}_{\mathcal{F}L^{1}}\leq & \max_{h\in\{1,2\}}\frac{MmC^{M+1}\norm{f}_{\mathcal{F}L^{1}}}{\left(\lfloor\frac{M-3}{2}\rfloor!\right)^{2}(\min_{\ell}q_{\ell}-q_{\ell+1})^{M-2-h}}.
\end{align}
\end{lemma}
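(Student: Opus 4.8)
\textbf{Proof proposal for Lemma \ref{factUpp}.}

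The plan is to prove the two inequalities \eqref{ineq1} and \eqref{ineq2} by exploiting the stationary-phase/oscillatory-integral gain produced by each factor $r_j(k+q_j)e^{ik\Diamond}$ together with the fact that the $q_j$ are well-separated. The key mechanism is the following: if one writes $F(k)$ for a generic ``slowly varying'' amplitude built from the $s_n(k+w_n)$ factors and a portion of the $r_j$'s, then a product of the form $r_j(k+q_j)\,r_{j'}(k+q_{j'})\,e^{iky}F(k)$, after a Fourier transform in $k$, becomes a convolution in which the rapid decay of $\widehat{r_j}$ (which follows from \eqref{decrs}, since $r_j(k+q_j)$ has one derivative bounded by $\langle k+q_j\rangle^{-2}$, so $r_j$ is Lipschitz with an $L^1$-type Fourier transform after dividing by $\langle x\rangle$) is translated by $q_j$; the large separation $q_j-q_{j+1}$ then forces the overlapping tails to be small. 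First I would record the one-factor estimate: for any amplitude $a(k)$ with $\|a\|+\|a'\|\le C\langle k\rangle^{-1}$ one has, for the operator ``multiply by $a(k+q)e^{ik\Diamond}$'', a bound on $\mathcal{F}L^1\to \mathcal{F}L^1$ and on $L^2\to L^2$ that is $O(1)$ (from \eqref{rs1}), but with an \emph{improved} bound $O((1+|q-q'|)^{-1})$ when composed against another such factor centered at $q'$, obtained by integrating by parts in $k$ once (the $e^{-2ikq}$-type phases produced by the reflection terms supply the needed oscillation, and the derivative falls on the smooth, rapidly decaying $r$'s and $s$'s using \eqref{decrs}).

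Next I would iterate: grouping the $M$ reflection factors $r_1(k+q_1),\dots,r_M(k+q_M)$ into pairs, each pair contributes a factor $\lesssim C\,(\min_\ell (q_\ell-q_{\ell+1}))^{-1}$ through the integration-by-parts argument above, and there are $\lfloor M/2\rfloor$ such pairs; the transmission factors $s_n(k+w_n)$ are all bounded by $1$ in sup-norm (hence act as contractions on $L^2$ and with norm $\lesssim 1 + \|s_n-1\|$-type corrections on $\mathcal{F}L^1$, by writing $s_n = 1 + (s_n-1)$ and using \eqref{decrs} to control $(s_n-1)$ in $\mathcal{F}L^1$). Careful bookkeeping of the combinatorics of which derivatives land where—each integration by parts can hit either an $r$, an $s$, or the residual amplitude—produces the iterated-factorial denominators $(\lfloor\frac{M-2}{2}\rfloor!)^2$ in \eqref{ineq1} and $(\lfloor\frac{M-3}{2}\rfloor!)^2$ in \eqref{ineq2}: roughly, after using $\lfloor (M-2)/2\rfloor$ pairs on each ``side'' of the chain one is left with a telescoping sum of iterated integrals whose value is controlled by the reciprocal square of that factorial, exactly as in the standard Dyson-series / Volterra-iteration estimate. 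The power $(\min_\ell q_\ell-q_{\ell+1})^{M-h}$ (resp.\ $M-2-h$) with $h\in\{1,2\}$ records that one or two of the separation gains may be ``spent'' on boundary terms, hence the $\max_{h\in\{1,2\}}$.

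Concretely the steps in order are: (i) reduce to $y=0$ by absorbing $e^{iky}$ into $f$, noting the $L^2$ and $\mathcal{F}L^1$ norms are translation/modulation invariant in the relevant sense; (ii) prove the two-factor lemma $\|r_j(k+q_j)r_{j'}(k+q_{j'})\,b(k)\,g\| \lesssim \frac{C}{|q_j-q_{j'}|}\|b\|_{?}\|g\|$ in both norms via one integration by parts, where $b$ collects finitely many $s$-factors and is controlled in $\mathcal{F}L^1$ by a Leibniz expansion using \eqref{decrs}; (iii) chain the pairs, keeping track of the $s$-factors (which only cost bounded constants and at most one extra power of $C$ and a factor $m$ from the $2(m-1)$ of them, giving the $MmC^{M+1}$ prefactor in \eqref{ineq2}); (iv) sum the resulting iterated integrals to extract the $(\lfloor(M-2)/2\rfloor!)^{-2}$ and $(\lfloor(M-3)/2\rfloor!)^{-2}$ factors; (v) take the $\max$ over $h$ to absorb boundary contributions. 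The main obstacle I anticipate is step (iii)–(iv): organizing the combinatorics so that the integration-by-parts bookkeeping genuinely yields an \emph{iterated} factorial (the square of a factorial of $\sim M/2$) rather than merely $M!^{-1}$ or $C^M/M!$, since the two halves of the chain of $r$'s must be estimated independently and then recombined; getting the exact floor functions and the $h\in\{1,2\}$ correction right, while ensuring the $\mathcal{F}L^1$ estimate \eqref{ineq2} picks up the stated extra factor $Mm C$ and weaker denominator power $M-2-h$ compared to the $L^2$ estimate \eqref{ineq1}, will require the most care.
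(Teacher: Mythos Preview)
Your proposal misses the actual mechanism entirely. There is no integration by parts, no oscillatory-integral gain, and no Volterra/Dyson iteration in this lemma; the factorial in the denominator comes from a completely elementary \emph{pointwise} bound on the multiplier.

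For \eqref{ineq1}, since $|e^{iky}|=1$ and $|s_n|\le 1$, the $L^2\to L^2$ norm of the multiplication operator is just $\sup_k\bigl|\prod_{j=1}^M r_j(k+q_j)\bigr|$, and by \eqref{decrs} this is at most $C^M\sup_k\prod_{j=1}^M(1+|k+q_j|)^{-1}$. The whole content of the lemma is then the calculus fact (the paper isolates it as a separate claim) that if $q_1>q_2>\cdots>q_M$ with minimum gap $\delta$, then for every fixed $k$ there is at most one index $j_0$ with $|k+q_{j_0}|$ small, and the remaining $|k+q_j|$ are at least $\delta,2\delta,3\delta,\ldots$ on each side of $j_0$; hence $\prod_j(1+|k+q_j|)^{-1}\le [(j_0-1)!\,\delta^{j_0-1}]^{-1}[(M-j_0-1)!\,\delta^{M-j_0-1}]^{-1}$, and maximizing over $j_0$ gives the $(\lfloor(M-2)/2\rfloor!)^{-2}\delta^{-(M-2)}$ bound. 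No phases or integrations appear.

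For \eqref{ineq2}, one uses that $\mathcal{F}L^1$ is a Banach algebra and $\|u\|_{\mathcal{F}L^1}\lesssim\|u\|_{H^1_k}$, so it suffices to bound the $H^1_k$ norm of $\prod_n s_n(k+w_n)\prod_j r_j(k+q_j)$. The single $k$-derivative distributes over the $M+M(m-1)=Mm$ factors by the product rule (this is where the $Mm$ prefactor comes from), and in the worst case it kills one $r_{\ell}$-factor, leaving a product over $j\neq\ell$ to which the same pointwise argument applies with $M$ replaced by $M-1$; this explains the shift from $M-2$ to $M-3$ in the floor.

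Your pairwise ``two-factor lemma'' $\|r_j(\cdot+q_j)r_{j'}(\cdot+q_{j'})\|\lesssim|q_j-q_{j'}|^{-1}$ is in fact true, but it is a consequence of the same pointwise inequality, not of any integration by parts; and chaining pairwise bounds would only give $(C/\delta)^{\lfloor M/2\rfloor}$, not the factorial. The factorial is \emph{not} a Dyson-simplex effect: it arises because all $M$ shifted centers are simultaneously $\delta$-separated, so for any $k$ the distances $|k+q_j|$ grow at least linearly in $|j-j_0|$.
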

the proof of Lemma \ref{factUpp} will follow from the following claim.

\begin{lemma}\label{claimfactorial}
 If any $M\in\mathbb{N}_{\geq 2},\,\min_{\ell}q_{\ell}-q_{\ell+1}>0$ for any $\ell\in [M-1],$ then the following inequality holds.
\begin{equation}\label{factorialupper}
    \min_{k\in\mathbb{R}}\prod_{j=1}^{M}\frac{1}{1+\vert k+q_{j} \vert }\leq \max\left(\frac{1}{\left(\lfloor\frac{M-2}{2}\rfloor !\right)^{2}(\min_{\ell}q_{\ell}-q_{\ell+1})^{M-2}},\frac{1}{(M-1)![\min_{\ell}q_{\ell}-q_{\ell+1}]^{M-1}}\right).
\end{equation}
\end{lemma}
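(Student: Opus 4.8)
The plan is to prove \eqref{factorialupper} by choosing a good point $k$ at which to evaluate the product $\prod_{j=1}^M (1+|k+q_j|)^{-1}$, exploiting that the $q_j$ are spread out by at least $\delta := \min_\ell (q_\ell - q_{\ell+1})$. The natural choice is to center $k$ near the ``middle'' of the cluster $\{q_j\}$, so that the distances $|k+q_j|$ grow at least linearly in $\delta$ as $j$ moves away from the middle index. Concretely, since $q_1 > q_2 > \dots > q_M$ with consecutive gaps at least $\delta$, for any index $j_0$ and any $j$ we have $|q_{j_0} - q_j| \ge \delta |j - j_0|$. Picking $k = -q_{j_0}$ for a central index $j_0 \approx M/2$ gives $1 + |k+q_j| = 1 + |q_{j_0} - q_j| \ge 1 + \delta|j-j_0| \ge \delta|j-j_0|$ when $j \ne j_0$ (and $=1$ when $j=j_0$).

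First I would set $j_0 = \lceil M/2 \rceil$ (or $\lfloor M/2\rfloor + 1$; one picks whichever makes the two-sided count cleanest) and write
\[
\min_{k} \prod_{j=1}^M \frac{1}{1+|k+q_j|} \le \prod_{j=1}^M \frac{1}{1 + \delta|j - j_0|} \le \prod_{\substack{j=1 \\ j \ne j_0}}^M \frac{1}{\delta |j-j_0|}.
\]
There are $M-1$ factors on the right, each carrying one power of $\delta^{-1}$, which accounts for the $\delta^{-(M-1)}$ in the second branch of the max and $\delta^{-(M-2)}$ in the first after throwing away one factor (bounding it by $1$, i.e. using $1+\delta|j-j_0| \ge 1$ for the index adjacent to $j_0$). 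The remaining product of integers $\prod_{j\ne j_0}|j-j_0|$ is a product of two ``descending runs'' of consecutive integers: the indices $j > j_0$ contribute $1\cdot 2\cdots (M-j_0)$ and the indices $j < j_0$ contribute $1\cdot 2 \cdots (j_0-1)$. With $j_0 \approx M/2$ each run has length roughly $M/2$, so each run's product is at least $\lfloor \frac{M-2}{2}\rfloor!$ (a careful index check is needed to get exactly $\lfloor\frac{M-2}{2}\rfloor$ rather than $\lfloor\frac{M-1}{2}\rfloor$ or similar, and this is where one must be slightly careful about parity of $M$ and the rounding in the definition of $j_0$). This yields $\prod_{j\ne j_0}|j-j_0| \ge (\lfloor\frac{M-2}{2}\rfloor!)^2$, giving the first branch; the second branch $\frac{1}{(M-1)![\,\delta\,]^{M-1}}$ follows alternatively by choosing $j_0 = 1$ (or $j_0 = M$), where the single run $1\cdot 2\cdots(M-1) = (M-1)!$ appears and no factor is discarded. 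Taking the better (smaller) of the two estimates — equivalently, noting the left side is bounded by the minimum, hence by either, hence by the max of the two as stated — completes the proof.

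The routine parts are the inequality $|q_{j_0}-q_j|\ge \delta|j-j_0|$ (immediate from telescoping the gap bounds) and the elementary manipulation of the product; the only genuinely fiddly point — and the step I would be most careful about — is the bookkeeping of which floor function comes out, i.e. verifying that with the stated central index the two descending runs each have length at least $\lfloor\frac{M-2}{2}\rfloor$ for both parities of $M$ (for $M=2$ and $M=3$ the factorial is $0!=1$ and one checks the bound degenerates correctly to $\delta^{-(M-2)}$ and $\delta^{-(M-1)}$ respectively). Once \eqref{factorialupper} is established, it feeds directly into Lemma \ref{factUpp}: one bounds $|r_j(k+q_j)| \le C/(1+|k+q_j|)$ and $|s_n(k+w_n)|\le 1$ pointwise, pulls out the minimum over $k$ via \eqref{factorialupper}, and the $L^2$ and $\mathcal{F}L^1$ norms of the product are controlled by the norm of $f$ times this minimum (for $\mathcal{F}L^1$ one additionally uses the derivative bounds in \eqref{decrs} and the algebra property of $\mathcal{F}L^1$, which is where the extra factor $mC$ and the shift by one or two in the floor arise).
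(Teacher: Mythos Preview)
Your approach evaluates the product at a single point $k=-q_{j_0}$ and thereby bounds the \emph{minimum} over $k$, which is indeed what the displayed statement literally says. However, ``$\min_{k\in\mathbb{R}}$'' in the statement is evidently a typo: both the paper's own proof and the application in Lemma~\ref{factUpp} require the bound to hold \emph{for every} $k\in\mathbb{R}$. To pass from the pointwise estimate $|g(k)f(k)|\le C^M|f(k)|\prod_j(1+|k+q_j|)^{-1}$ to $\|gf\|_{L^2}\le(\text{const})\|f\|_{L^2}$ one needs $\sup_k\prod_j(1+|k+q_j|)^{-1}$, not $\inf_k$; your closing remark that one ``pulls out the minimum over $k$'' is exactly backwards. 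Evaluating at one well-chosen $k$ cannot yield this uniform bound.

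The paper's argument is therefore genuinely different. After a harmless translation making all $q_j>0$, it partitions $\mathbb{R}$ according to where $k$ lies relative to $-q_1<\cdots<-q_M$. For $k>0$ or $k<-q_1$ the distances $|k+q_j|$ are all on one side and telescope to give $(M-1)!\,\delta^{M-1}$ in the denominator --- the second branch of the $\max$. For $k\in[-q_j,-q_{j+1})$ one gets two one-sided runs of lengths $j-1$ and $M-j-1$, hence $(j-1)!(M-j-1)!\,\delta^{M-2}$; minimizing this product of factorials over $j$ gives $(\lfloor\tfrac{M-2}{2}\rfloor!)^2$, the first branch. The $\max$ on the right-hand side appears precisely because \emph{different} values of $k$ land in \emph{different} cases, not because one has two upper bounds at the same $k$ and discards the sharper one. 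Your argument can be salvaged by letting the index $j_0$ depend on $k$ (choosing $j_0$ so that $k\in[-q_{j_0},-q_{j_0+1})$), which is essentially what the paper does; but as written it does not prove what is needed.
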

\begin{proof}[Proof of Lemma \ref{claimfactorial}]
   Without loss of generality, we can restrict the prove of the claim to the case where $\min_{\ell}q_{\ell}>0$ and $\min_{\ell}q_{\ell}-q_{\ell+1}>0$ since one can uniformly add a fixed  big positive constant $C$ to the set $\{q_\ell\}$ so that $q_{M-1}+C>0$, and this procedure preserves  $\min_{\ell}\left((q_{\ell}+C)-(q_{\ell+1}+C)\right)=\min_{\ell}q_{\ell}-q_{\ell+1}.$
   
Firstly, we note that
\begin{equation}\label{factorialkk}
    k+q_{j}\geq k+q_{j+1}+\min_{\ell}(q_{\ell}-q_{\ell+1}),
\end{equation}
Then for  $k>0$ or $k<{-}q_{1},$ iterating the inequality above, we have that
\begin{equation*}
    \vert k+q_{n}\vert \geq  \begin{cases}
        (M-n)[\min_{\ell}q_{\ell}-q_{\ell+1}] \text{, if $k\geq 0,$}\\
     (n-1) [\min_{\ell}q_{\ell}-q_{\ell+1}] \text{, if $k< {-}q_{1}.$}     
    \end{cases}
\end{equation*}
Consequently, if $k>0$ or $k<{-}q_{1},$ from estimates above,
\begin{equation*}
    \prod_{j=1}^{M}\frac{1}{1+\vert k+q_{j} \vert }\leq \frac{1}{(M-1)!}\frac{1}{[\min_{\ell}(q_{\ell}-q_{\ell+1})]^{M-1}}.
\end{equation*}
\par Otherwise, if ${-}q_{j}\leq k< {-}q_{j+1}$ for some $j\in [M-1],$ we can verify from \eqref{factorialkk} the following estimate.
\begin{equation*}
    \vert k+q_{n}\vert \geq  \begin{cases}
        (j-n)[\min_{\ell}q_{\ell}-q_{\ell+1}] \text{, if $n \leq j,$}\\
     (n-j-1) [\min_{\ell}q_{\ell}-q_{\ell+1}] \text{, if $n \geq j+1.$}     
    \end{cases}
\end{equation*} 
Therefore, we deduce from the estimate above that
\begin{equation*}
    \prod_{j=1}^{M}\frac{1}{1+\vert k+q_{j} \vert }
    \leq \left[\frac{1}{(j-1)![\min_{\ell}q_{\ell}-q_{\ell+1}]^{j-1}}\right]\left[\frac{1}{(M-j-1)![\min_{\ell}q_{\ell}-q_{\ell+1}]^{M-j-1}}\right].
\end{equation*}
\par In conclusion, \eqref{factorialupper} holds for any $m\in\mathbb{N}.$
\end{proof}
\begin{proof}[Proof of Lemma \ref{factUpp}]
 First, using \eqref{decrs} and \eqref{rs1}, we can verify the following estimate.
 \begin{equation*}
     \left\vert e^{iky}[\prod_{n=1}^{M(m-1)}s_{n}(k+w_{n})][\prod_{j=1}^{M}r_{j}(k+q_{j})]f(k)\right\vert
         \leq C^{M}\vert f(k) \vert \prod_{j=1}^{M}\frac{1}{1+\vert k+q_{j} \vert }.
 \end{equation*}
Therefore, using the estimate \eqref{factorialupper}, we conclude from the inequality above that
\begin{equation*}
    \norm{e^{iky}[\prod_{n=1}^{M(m-1)}s_{n}(k+w_{n})][\prod_{j=1}^{M}r_{j}(k+q_{j})]f(k)}_{L^{2}_{k}(\mathbb{R})}\leq \max_{h\in\{1,2\}}\frac{C^{M}\norm{f}_{L^{2}_{k}(\mathbb{R})}}{\left(\lfloor\frac{M-2}{2}\rfloor !\right)^{2}(\min_{\ell}q_{\ell}-q_{\ell+1})^{M-h}}.
\end{equation*}
\par Next to show \eqref{ineq2}, we first note that by the Hausdorff-Young inequality and the Cauchy-Schwarz inequality, one has
\begin{equation*}
    \norm{e^{iky}u_{1}(k)u_{2}(k)}_{\mathcal{F}L^{1}}\leq \norm{u_{1}(k)}_{\mathcal{F}L^{1}}\norm{u_{2}(k)}_{L^\infty_k}\leq C\norm{u_{1}(k)}_{H^{1}_{k}(\mathbb{R})}\norm{u_{2}(k)}_{\mathcal{F}L^{1}}.
\end{equation*}
By estimates \eqref{decrs}, \eqref{rs1}, we can deduce using the product rule of the derivative that
   \begin{align}
 &   \norm{e^{iky}[\prod_{n=1}^{M(m-1)}s_{n}(k+w_{n})][\prod_{j=1}^{M}r_{j}(k+q_{j})]f(k)}_{\mathcal{F}L_{1}}\\
    &\quad\quad\quad\quad\quad\quad \leq  C\norm{\prod_{n=1}^{M(m-1)}s_{n}(k+w_{n})][\prod_{j=1}^{M}r_{j}(k+q_{j})]}_{H^{1}_{k}(\mathbb{R})}\norm{f}_{\mathcal{F}L^{1}}\\
  &  \quad\quad\quad\quad\quad\quad\quad\quad \leq MmC^{M+1}\norm{f}_{\mathcal{F}L^{1}}\inf_{\ell,k\in\mathbb{R}}\prod_{j=1,\,j\neq \ell}^{M}\frac{1}{1+\vert k+q_{j}  \vert}.
    \end{align}
Therefore, Lemma \ref{claimfactorial} implies that
\begin{equation*}
    \norm{e^{iky}[\prod_{n=1}^{M(m-1)}s_{n}(k+w_{n})][\prod_{j=1}^{M}r_{j}(k+q_{j})]f(k)}_{\mathcal{F}L_{1}}\leq \max_{h\in\{1,2\}}\frac{MmC^{M+1}\norm{f}_{\mathcal{F}L^{1}}}{\left(\lfloor \frac{M-3}{2}\rfloor !\right)^{2}(\min_{\ell}q_{\ell}-q_{\ell+1})^{M-2-h}},
\end{equation*}
which corresponds to inequality \eqref{ineq2}.
\end{proof}

\begin{proposition}\label{mainprop}
Let $n\in\mathbb{N},$ and assume that the linear bounded map $T: L^{2}_{k}(\mathbb{R}_{k},\mathbb{C}^{2m-2})\to  L^{2}_{k}(\mathbb{R}_{k},\mathbb{C}^{2m-2})$ is of the from \eqref{form1} or \eqref{form2}. For any $\vec{g}\in \mathbb{C}^{2m-2}$, the vector $(\mathrm{Id}-T)^{n}(\vec{g})\in L^{2}_{k}(\mathbb{R}_{k},\mathbb{C}^{2m-2})$ has the following representation:
 \begin{equation*}
     (\mathrm{Id}-T)^{n}(\vec{g})(k)=R_{n}(\vec{g})+S_{n}(\vec{g}),
 \end{equation*}
where $R_n(\vec{g})$ consists of terms in which each summand involves \emph{at least one} reflection coefficient, while $ S_n(\vec{g})$ consists of terms involving \emph{only} transmission coefficients.

More precisely, each coordinate $[R_{n}(\vec{g})]_{i}$ is a finite sum of at most $2^{n}$ elements  $\vec{z}_{i}\in L^{2}_{k}(\mathbb{R}_{k},\mathbb{C})$ satisfying one of the two following expressions: for some $N_i\in\mathbb{N}$, some shifts $\{f_{i,n}\}\subset \mathbb{R}$ and $\{v_{\ell,i}\}\subset \mathbb{R}$  in terms of $v_1>v_2>\ldots>v_m$,  a function $j_\ell\in\{1,\ldots,m\}$, and some $h_{i,n}\in \{1,\ldots,2m-1\}$
\begin{align*}
    \vec{z}_{i}(k)=&S_{i}(k)[\prod_{\ell=1}^{N_{i}}r_{j_{\ell}}(k+v_{\ell,i})]\tau_{f_{i,n}(v)}g_{h_{i,n}}(\pm k),\\
    \vec{z}_{i}(k)=&S_{i}(k)[\prod_{\ell=1}^{N_{i}}r_{j_{\ell}}({-}k-v_{\ell,i})]\tau_{f_{i,n}(v)}g_{h_{i,n}}(\pm k),
\end{align*}
such that 
\begin{itemize}
\item [(P1)] If $N_{i}>1$, then $v_{\ell+1,i}-v_{\ell,i}\geq \min_{n}\frac{v_{n}-v_{n+1}}{2}$ for $1\leq\ell\leq N_i-1 $,
If $N_{i}=1$,  this property is vacuously true;
\item [(P2)] $S_{i}(k)=\prod_{\ell=1}^{N_{i,
S}}s_{h_\ell}(\mp k\pm v_{i,n,\ell})  $ for some $N_{i,1}\in\mathbb{N}$, some  function $h_\ell\in\{1,\ldots,m\}$ and some shifts $\{v_{i,n,i}\}\subset \mathbb{R}$  in terms of $v_1>v_2>\ldots>v_m$; 
\item [(P3)] $N_{i,1}+N_{i}=n;$\\
\item [(P4)] $S_n(\vec{g})$ is a finite sums of terms of the form $\prod_{\ell=1}^{n}s_{\iota_\ell}(\mp k\pm v_{i,n,\ell}^S) \tau_{M_{i,n}} g_{q_{i,n}}(\pm k)$ for some shifts $\{v_{i,n,i}^S\},\,\{M_{i.n}\}\subset \mathbb{R}$  in terms of $v_1>v_2>\ldots>v_m$, some  function $\iota_\ell\in\{1,\ldots,m\}$, and $q_{i,n}\in \{1,\ldots,2m-1\}$.
\end{itemize}
\end{proposition}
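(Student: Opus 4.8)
The plan is to prove Proposition \ref{mainprop} by induction on $n$, exploiting the fact that the single-step operator $R = \mathrm{Id}-T$ acts on each coordinate $g_i$ by replacing it with a sum of two terms: one term picking up a reflection coefficient $r_{j_\ell}(\pm k \pm \tfrac{v_{j_\ell}}{2})$ together with a sign-flip and shift $g_{h}({-}k - v_{j_\ell})$, and one term picking up a transmission coefficient $s_{j_\ell}(\pm k \pm \tfrac{v_{j_\ell}}{2})$ with a \emph{neighboring} coordinate index $g_{h\pm 1}(k)$ (no sign flip, no shift), as one reads directly off \eqref{form1} and \eqref{form2}. The base case $n=1$ is then immediate: $R(\vec g)$ splits into the reflection part $R_1$ (each coordinate a sum of at most $2 = 2^1$ terms, each carrying exactly $N_i=1$ reflection coefficient, with (P1) vacuous, $S_i \equiv 1$ so $N_{i,1}=0$, giving (P3) $0+1=1$), and the transmission part $S_1$ (each coordinate $s_{j}(\cdot)\, g_{q}(\pm k)$, matching (P4) with the single product $\prod_{\ell=1}^{1} s_{\iota_\ell}$).

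For the inductive step, write $(\mathrm{Id}-T)^{n+1}(\vec g) = R\big(R_n(\vec g) + S_n(\vec g)\big)$ and track how $R$ acts on a single monomial of each type. First I would show $S_n$ is stable under $R$ in the structural sense claimed: applying $R$ to a pure-transmission monomial $\prod_{\ell=1}^n s_{\iota_\ell}(\mp k \pm v^S_{i,n,\ell})\,\tau_{M_{i,n}} g_{q}(\pm k)$ produces either (i) a new pure-transmission monomial with one extra $s$-factor and the index $q$ moved to a neighbor $q\pm 1$ — still of the form in (P4) with product length $n+1$; or (ii) a term with one reflection coefficient $r(\cdot)$ and $n$ transmission coefficients, with the argument of $g$ now sign-flipped and shifted — this feeds into $R_{n+1}$ with $N_i=1$, $N_{i,1}=n$, so (P3) reads $n+1$, and (P1) is vacuous. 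Next, applying $R$ to a reflection monomial $S_i(k)\prod_{\ell=1}^{N_i} r_{j_\ell}(\cdots)\, \tau_{f}g_h(\pm k)$: since $g_h(\pm k)$ already has a sign-flip/shift baked in from previous reflections, one step of $R$ either (i) prepends another $s$-factor and shifts the $g$-index to a neighbor, increasing $N_{i,1}$ by $1$ and keeping $N_i$ fixed (so (P3) increments correctly and (P1) is untouched); or (ii) prepends another $r$-factor $r_{j_{N_i+1}}(\pm k \pm \tfrac{v}{2})$ and applies a further sign-flip and shift to the $g$-argument, increasing $N_i$ by $1$. The count of terms at most doubles at each step, giving the $2^{n+1}$ bound.

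The step I expect to be the main obstacle — and the one requiring the most careful bookkeeping — is verifying the velocity-gap property (P1): that the shifts $v_{\ell,i}$ appearing in successive reflection coefficients of a single surviving monomial are separated by at least $\tfrac{1}{2}\min_n(v_n - v_{n+1})$. The mechanism is that each reflection step in \eqref{form1}/\eqref{form2} couples coordinate index $h$ to $h$ with a sign flip and shift by $\pm v_{j}$, and the two reflection coefficients that can legitimately be composed in succession along a nonzero path through the coupling graph correspond to \emph{distinct} potentials $j$ whose velocities differ. I would make this precise by describing the "path" a nonzero monomial traces through the index set $\{1,\ldots,2m-2\}$: the coordinate index only moves between $2\ell-1$ and $2\ell$ (same potential $\ell$) under a reflection, and between blocks under a transmission; chasing the accumulated shift $f_{i,n}(v)$ through these moves shows that the argument of the next available reflection coefficient is displaced by at least one velocity gap relative to the previous one. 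The sign-tracking ($\pm k$ versus $\mp k$ and the corresponding $+v$ versus $-v$ placement in $r(\pm k - v)$ vs $r({-}k-v)$) is the delicate part; I would handle it by fixing the convention in \eqref{form1} and \eqref{form2} and checking the two cases (current argument $+k$ / current argument ${-}k$) separately, observing that in each case $R$ flips the sign and composes the shift additively, which preserves the monotone-gap structure. Everything else — the term count $2^n$, the length identity $N_{i,1}+N_i=n$, and the bookkeeping of which $s$- and $r$-indices appear — follows by straightforward induction once the coupling structure of $R$ is recorded.
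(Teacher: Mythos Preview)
Your approach --- induction on $n$ with (P2)--(P4) immediate from the two-branch structure of $R$ and (P1) handled by tracking the path a monomial traces through the coordinate graph --- is essentially the paper's; the paper makes the (P1) bookkeeping crisp by strengthening the inductive hypothesis (the Claim with conditions \eqref{odd}, \eqref{even}) to record that the outermost reflection coefficient on coordinate $2i$ (resp.\ $2i+1$) has index $i-2h$ (resp.\ $i+2+2h$) for some $h\ge 0$, after which the velocity-gap check is a one-line computation. One slip in your sketch: the two reflection moves within a block $\{2\ell-1,2\ell\}$ use \emph{adjacent} potentials $r_\ell$ and $r_{\ell+1}$, not the same one --- and that is precisely the mechanism producing the gap in (P1).
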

\begin{proof}[Proof of Proposition \ref{mainprop}.]
It is enough to prove Proposition \ref{mainprop} for the linear operator $T$ to be of \eqref{form1}, since the proof for operators $T$ of \eqref{form2} is completely analogous.
\par We will prove the desired results using induction.

 Note that the properties $(\mathrm{P2}),\,(\mathrm{P3}),\,(\mathrm{P4})$ are direct consequences of the definition of $(\mathrm{Id}-T)$ and the fact that  $(\mathrm{Id}-T)$ are applied $n$ times to the element $\vec{g}=(g_{1},\,...,\,g_{2m-2})\in L^{2}_{k}(\mathbb{R},\mathbb
{C}^{2m-2}).$
\par The proof of property $(\mathrm{P1})$ will be more involved using induction. 
Actually, we will prove that the following stronger condition is always true for any natural number $n\geq 1.$

\noindent{\bf Claim:}
    Given $n\in\mathbb{N},$ then the $(2i+1)-$th ($2i-$th) coordinates of of $R_{n}(\vec{g})$ from $(\mathrm{Id}-T)^{n}$ are consist of elements of the following forms:
    \begin{align}
    \label{odd}
    &\mathfrak{S}_i(k)r_{i+2+2h}\left({-}k-\frac{v_{i+2h}}{2}\right)[\prod_{\ell=1}^{N_{i}}r_{j(\ell)}({-}k-v_{\ell,i})]g_{N(i,n)}(k+f_{i,n}(v)) \text{, on the $(2i+1)-$th coordinate,}\\ \label {even}
   & \mathfrak{S}_i(s)r_{i-2h}\left(k+\frac{v_{i-2h}}{2}\right)[\prod_{\ell=1}^{N_{i}}r_{j(\ell)}(k+v_{\ell,i})]g_{N(i,n)}(\pm k\pm f_{i,n}(v)) \text{, on the $(2i)-$th coordinate,}
    \end{align}
for some $h,\,N_i\in\mathbb{N}$, some $\mathfrak{S}_i$ satisfying  $(\mathrm{P2})$, some shifts $\{f_{i,n}\}\subset \mathbb{R}$ and $\{v_{\ell,i}\}\subset \mathbb{R}$  in terms of $v_1>v_2>\ldots>v_m$, and  functions $j(\ell),\,N(i,n)\in\{1,\ldots,m\}$.

\noindent{\bf Proof of Claim:}
Indeed, when $n=1,$ each coordinate of $(\mathrm{Id}-T)(\vec{g})$ is given by the following
\begin{align}
  &\left[(\mathrm{Id}-T)(\vec{g})\right]_{2i}=  r_{i}\left(k+\frac{v_{i}}{2}\right)g_{2i-1}({-}k-v_{i})+(1-\delta^{i}_{1})s_{i}\left(k+\frac{v_{i}}{2}\right)g_{2i-2}(k),\\
 & \left[(\mathrm{Id}-T)(\vec{g})\right]_{2i+1}=r_{i+2}\left({-}k-\frac{v_{i+2}}{2}\right)g_{2i+2}({-}k-v_{i+2})+(1-\delta^{m-2}_{i})s_{i+2}\left({-}k-\frac{v_{i+2}}{2}\right)g_{2i+3}(k),
\end{align}
which clearly satisfies the conditions \eqref{odd}, \eqref{even} and Property $(\mathrm{P1}).$ Therefore, we can assume that the conditions \eqref{odd}  and\eqref{even},   Proposition \ref{mainprop} are still true when $n\leq N_{0},$ for some $N_{0}\in\mathbb{N}$.

\par Since Proposition \ref{mainprop} is true when $n=N_{0}$, one has  $(\mathrm{Id}-T)^{N_{0}}(\vec{g})=R_{N_{0}}(\vec{g})+S_{N_{0}}(\vec{g}),$ and  it is not difficult to verify from the definition of $S_{N_0}(\vec{g})$ and the basis case $n=1$ that each coordinate $[R_{1}(S_{N_{0}}(\vec{g}))]_{i}$ satisfies one of the conditions \eqref{odd} or \eqref{even}, depending on the parity of $i.$

By the induction assumption, typical elements in $(2i+1)$-th and $(2i)$-th coordinates of $R_{N_{0}}(\vec{g})$ are given by the following functions:
\begin{align*}
    W_{2i+1}(k)&:=\mathfrak{S}_i(k)r_{i+2+2h}\left({-}k-\frac{v_{i+2h}}{2}\right)[\prod_{\ell=1}^{n_{i}}r_{j(\ell)}({-}k-v_{\ell,i})]g_{N(i,n)}(k+f_{i,n}(v)),\\
    W_{2i}(k)&:=\mathfrak{S}_i(k)r_{i-2h}\left(k+\frac{v_{i-2h}}{2}\right)[\prod_{\ell=1}^{n_{i}}r_{j(\ell)}(k+v_{\ell,i})]g_{N(i,n)}(\pm k\pm f_{i,n}(v))
\end{align*}respectively.

 Using the explicit formula of $\mathrm{Id}-T,$ 
we can verify that
\begin{align}
    & (\mathrm{Id}-T)[W_{2i+1}(k)e_{2i+1}]\\
     &=r_{i+1}\left(k+\frac{v_{i+1}}{2}\right)W_{2i+1}({-}k-v_{i+1})e_{2i+2}+\delta^{2i-1}_{2m-3}s_{i+1}\left({-}k-\frac{v_{i+1}}{2}\right)W_{2i+1}(k)e_{2i-1}
\end{align}
and
\begin{align}
 & (\mathrm{Id}-T)W_{2i}e_{2i}\\
 &=r_{i+1}\left({-}k-\frac{v_{i+1}}{2}\right)W_{2i}({-}k-v_{i+1})e_{2i-1}+\delta^{2i+2}_{2} s_{i+1}\left(k+\frac{v_{i+1}}{2}\right)W_{2i}(k)e_{2i+2}   
\end{align}
where as usual $e_j$ means the unit vector with $1$ in the $j$th component.

These two result in typical elements  after applying $(\mathrm{Id}-T)$ to the $2i+1$ and $2i$ coordinates of   $(\mathrm{Id}-T)^{N_0}g$. Now it suffices to check all these elements satisfies \eqref{odd} and \eqref{even} depending on the parity.
   
\par We start with $(\mathrm{Id}-T)W_{2i}e_{2i}$. First of all, from the definition of $W_{2i}$ and the fact that \eqref{odd} and \eqref{even} are true when $n\leq N_{0},$ we can verify that
\begin{align*}
    \delta^{2i+2}_{2} s_{i+1}\left(k+\frac{v_{i+1}}{2}\right)W_{2i}(k)e_{2i+2}
\end{align*}
satisfies \eqref{even}. To check $r_{i+1}\left({-}k-\frac{v_{i+1}}{2}\right)W_{2i}({-}k-v_{i+1})e_{2i-1}$, we note that from the definition of  $W_{2i}$, one has
\begin{multline*}
    W_{2i}\left({-}k-v_{i+1}\right)\\=\mathfrak{S}_i({-}k-v_{i+1})r_{i-2h}\left({-}k-v_{i+1}+\frac{v_{i-2h}}{2}\right)[\prod_{\ell=1}^{n_{i}}r_{j(\ell)}({-}k-v_{i+1}+v_{\ell,i})]g_{N(i,n)}\left(\mp k\mp v_{i+1}\pm f_{i,n}(v)\right).
\end{multline*}
We now can verify $(\mathrm{P}1)$. Taking the shift of $r_{i+1}\left({-}k-\frac{v_{i+1}}{2}\right)$ and the shift from $r_{i-2h}\left({-}k-v_{i+1}+\frac{v_{i-2h}}{2}\right)$, one check that
\begin{equation*}
\frac{v_{i-2h}}{2}-v_{i+1}+\frac{v_{i+1}}{2}=\frac{v_{i-2h}-v_{i+1}}{2}\geq \min_{j}\frac{v_{j}-v_{j+1}}{2}.
\end{equation*}
For the remaining shifts, we then get the following inequalities by our   induction hypotheses
\begin{align}
(v_{\ell,i}-v_{i+1})-(\frac{v_{i-2h}}{2}-v_{i+1})=
    v_{\ell,i}-\frac{v_{i-2h}}{2}\geq \min_{j}\frac{v_{j}-v_{j+1}}{2}
\end{align}
and
\begin{equation*}
 (v_{\ell+1,i}-v_{i+1})-(v_{\ell,i}-v_{i+1})= v_{\ell+1,i}-v_{\ell,i}\geq \min_{j}\frac{v_{j}-v_{j+1}}{2}.
\end{equation*}
Therefore, we can deduce from the formula of $W_{2i}\left({-}k-v_{i+1}\right)$ and computations above
\begin{align*}
    r_{i+1}\left({-}k-\frac{v_{i+1}}{2}\right)W_{2i}\left({-}k-v_{i+1}\right)e_{2i+2}
\end{align*}
satisfies the property $(\mathrm{P}1)$ and the condition \eqref{odd}. 
\par Similarly, we can verify that each coordinate of the function $(\mathrm{Id}-T)[W_{2i}(k)e_{2i}]$ has a representation of the form
\begin{equation*}
    R_{N_{0}+1}(e_{2i})+S_{N_{0}+1}(e_{2i})
\end{equation*}
satisfying propositions $(\mathrm{P}1),\,(\mathrm{P}2),\,(\mathrm{P}3),\,(\mathrm{P}4)$ and condition \eqref{even}. Therefore, the Property $(\mathrm{P}4),$ and conditions \eqref{odd} and \eqref{even} are true for $n=N_{0}+1,$ if they are true for $n=N_{0}.$
\par In conclusion, property $(\mathrm{P}4)$ is true for any $n\in\mathbb{N}.$ By induction, the desired decompositions and related properties  hold for any $n\in\mathbb{N}.$
\end{proof}

\begin{definition}
    Given a vector $ \vec{g}\in L^{2}(\mathbb{R}, \mathbb{C}^{2m-2})$, we denote by $ \mathfrak{R}(\vec{g}) \subset L^{2}(\mathbb{R}, \mathbb{C}^{2m-2})$ the collection of vectors whose components are linear combinations of the components of $g$, where the coefficients are products of transmission and reflection coefficients, and each product includes at least one reflection coefficient as a factor.
\end{definition}

\begin{definition}
   Given a vector $ \vec{g}\in L^{2}(\mathbb{R}, \mathbb{C}^{2m-2})$, $R_{q}(\vec{g})(k)\in  L^{2}(\mathbb{R},\mathbb{C}^{2m-2})$ a vector-value function whose $i-th$ coordinate of function is a finite sum of at most $2^{q-1}$ elements, each of them has only one of the the following forms: for some $N_i\in\mathbb{N}$, some shifts $\{f_{i,n}\}\subset \mathbb{R}$ and $\{v_{\ell,i}\}\subset \mathbb{R}$  in terms of $v_1>v_2>\ldots>v_m$,  a function $j_\ell\in\{1,\ldots,m\}$, and some $h_{i,n}\in \{1,\ldots,2m-1\}$
\begin{align*}
    \vec{z}_{i}(k)=&S_{i}(k)[\prod_{\ell=1}^{N_{i}}r_{j_{\ell}}(k+v_{\ell,i})]\tau_{f_{i,n}(v)}g_{h_{i,n}}(\pm k),\\
    \vec{z}_{i}(k)=&S_{i}(k)[\prod_{\ell=1}^{N_{i}}r_{j_{\ell}}({-}k-v_{\ell,i})]\tau_{f_{i,n}(v)}g_{h_{i,n}}(\pm k),
\end{align*}
such that
\begin{equation}\label{eq:Si}
  S_{i}(k)=\prod_{\ell=1}^{N_{i,1}}s_{h_{i}}(\mp k\pm v_{i,n,\ell}),
  \end{equation}
$N_{i}\geq 1$ and $N_{i,1}+N_{i}=q.$

In the following, to simplify notation, we use $R_q(\vec{g})(k)$ to denote any function satisfying the properties described above. It does not represent a specific function. Similarly, $ S_q$ will denote a function given by a product of transmission coefficients of the form~\eqref{eq:Si}. Again,
it does not refer to a specific function.

\end{definition}

\begin{proposition}\label{RRR}
Let $T:L^{2}_{k}(\mathbb{R},\mathbb{C}^{2m-2})\to L^{2}_{k}(\mathbb{R},\mathbb{C}^{2m-2})$ be a linear bounded operator of \eqref{form1} or \eqref{form2}.
Let $\vec{g}=(g_{1},\,...,\,g_{2m-2})\in L^{2}(\mathbb{R},\mathbb{C}^{2m-2}),$ then 
\begin{equation}\label{eq:m-1}
    (\mathrm{Id}-T)^{m-1}(\vec{g})=R_{m}(\vec{g})(k)\in L^{2}_{k}(\mathbb{R},\mathbb{C}^{2}).
\end{equation}
%
\end{proposition}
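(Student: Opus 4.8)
The plan is to deduce \eqref{eq:m-1} from Proposition~\ref{mainprop} together with an elementary nilpotency statement about the ``transmission-only'' part of $\mathrm{Id}-T$. By Proposition~\ref{mainprop} with $n=m-1$ we already have $(\mathrm{Id}-T)^{m-1}(\vec{g}) = R_{m-1}(\vec{g}) + S_{m-1}(\vec{g})$, where every summand of $R_{m-1}(\vec{g})$ carries at least one reflection coefficient and is of exactly the type appearing in the definition of $R_m(\vec{g})$, while $S_{m-1}(\vec{g})$ collects the summands built purely from transmission coefficients. Thus the entire content of the proposition is the identity $S_{m-1}(\vec{g})\equiv 0$. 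As usual it suffices to treat $T$ of \eqref{form1}; for \eqref{form2} the argument is identical, with the orientation of the two chains below reversed.

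First I would describe $S_n(\vec{g})$ explicitly by induction on $n$. Reading off \eqref{form1}, a single application of $\mathrm{Id}-T$ contributes to each coordinate a reflection term (carrying a fresh factor $r_\bullet$) plus \emph{at most one} transmission term, and these transmission terms are rigid: the one feeding coordinate $2\ell$ is a coefficient $s_\ell(\cdot)$ times $g_{2\ell-2}(k)$, the one feeding coordinate $2\ell+1$ is a coefficient $s_{\ell+2}(\cdot)$ times $g_{2\ell+3}(k)$, and coordinates $2$ and $2m-3$ receive no transmission term at all. Two features make iteration transparent: the incoming $g$-factor is always evaluated at $k$, so a transmission step propagates neither a spatial shift nor a sign flip of $k$; and the subscript of that $g$-factor moves by a fixed step. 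Hence, with the convention that a term is $0$ once its $g$-subscript leaves $\{1,\dots,2m-2\}$, one shows by induction that the $i$-th coordinate of $S_n(\vec{g})$ is a single product of $n$ transmission coefficients times $g_{i+2n}(k)$ when $i$ is odd, and times $g_{i-2n}(k)$ when $i$ is even. The base case $n=0$ is $S_0(\vec{g})_i=g_i(k)$, and the inductive step is precisely the substitution of this formula at level $n-1$ into the two transmission rules above; the missing transmission inputs at coordinates $2$ and $2m-3$ are consistent with the convention, since $g_{2-2n}$ and $g_{(2m-3)+2n}$ already leave the range for $n\ge 1$.

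With this normal form in hand, taking $n=m-1$ finishes the proof: for odd $i\ge 1$ the subscript equals $i+2(m-1)\ge 2m-1>2m-2$, and for even $i\le 2m-2$ it equals $i-2(m-1)\le 0<1$, so $S_{m-1}(\vec{g})_i=0$ in every coordinate. Consequently $(\mathrm{Id}-T)^{m-1}(\vec{g}) = R_{m-1}(\vec{g}) = R_m(\vec{g})$, which is \eqref{eq:m-1}. Conceptually this is just the statement that the transmission sub-dynamics decouples into the two disjoint directed chains $1\leftarrow 3\leftarrow\cdots\leftarrow 2m-3$ and $2m-2\leftarrow\cdots\leftarrow 4\leftarrow 2$, each on $m-1$ nodes, and is therefore nilpotent of index $m-1$.

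The one step that needs care is the bookkeeping in the second paragraph: one must read the coupling pattern and the precise arguments of the $s_\bullet$- and $g$-factors off \eqref{form1} (and, for the case \eqref{form2}, off \eqref{form2}), and check the two boundary coordinates where the transmission input is absent, so that the clean recursion ``$g$-subscript $\mapsto g$-subscript $\pm 2$, always at argument $k$'' holds with no exceptions. Once that is in place the count $2(m-1)>2m-2$ gives the result immediately; no estimates are involved.
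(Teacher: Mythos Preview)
Your proof is correct and follows essentially the same route as the paper's own argument: both identify that the transmission-only part of $(\mathrm{Id}-T)$ acts as a shift by $\pm 2$ along the two disjoint index chains (odd and even), and conclude nilpotency of index $m-1$ by counting that the shift by $\pm 2(m-1)$ exits the index range $\{1,\dots,2m-2\}$. The paper phrases this by tracking where a single basis vector $g(k)e_j$ lands after $n$ iterations (its Claim with \eqref{RRR1}--\eqref{RRR2}), whereas you phrase it dually by tracking which input coordinate feeds a given output coordinate; these are the same computation read in opposite directions, and your invocation of Proposition~\ref{mainprop} to get the $R_{m-1}+S_{m-1}$ split up front is a clean way to isolate the only nontrivial point $S_{m-1}\equiv 0$.
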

\begin{proof}
It is enough to prove Proposition \ref{RRR} for operators $T$ of \eqref{form1}, since the proof that the operators of \eqref{form2} satisfy Proposition \ref{RRR} is completely analogous.
\par First, let $\vec{g}=(g_{1},\,...,\,g_{2m-2}).$ From the definition of $(\mathrm{Id}-T),$ firstly, we can verify that
\begin{equation}\label{RR}
    (\mathrm{Id}-T)[ \mathfrak{R}(\vec{g})]\subset  \mathfrak{R}(\vec{g}),
\end{equation}
and
\begin{align}\label{r1}
    [(\mathrm{Id}-T)\vec{g}]_{2i}=& R_{1}(\vec{g})(k)+(1-\delta^{i}_{1})s_{i}\left(k+\frac{v_{i}}{2}\right)g_{2i-2}(k),\\ \label{r2}[(\mathrm{Id}-T)\vec{g}]_{2i+1}= & R_{1}(\vec{g})(k)+(1-\delta^{m-2}_{i})s_{i+2}\left({-}k-\frac{v_{i+1}}{2}\right)g_{2i+3}(k).
\end{align}

\noindent{\bf Claim:}
For any $n\in\mathbb{N},$ given $g\in L^2(\mathbb{R},\mathbb{C})$, the following identities are true.
\begin{align}\label{RRR1}
    (\mathrm{Id}-T)^{n}[g(k)e_{2i}]=&
    R_{n}(g(k)e_{2i})+[S_{n}(k)g(k)]e_{2i+2n}\text{, if $2i+2n\leq 2m-2,$}\\ \label{RRR2}
    (\mathrm{Id}-T)^{n}[g(k)e_{2i+1}]=&
    R_{n}(g(k)e_{2i+1})+[S_{n}(k)g(k)]e_{2i+1-2n} \text{, if $2i+1-2n\geq  1$}
\end{align}
where the expressions $S_{n}(k)$ above are products of transmission coefficients of the form \eqref{eq:Si}.

\noindent{\bf Proof of the Claim:} The proof follows from induction. The case $n=1$ is a direct consequence of the identities \eqref{r1} and \eqref{r2}. Therefore, we can assume the existence of $n_{0}\in\mathbb{N}$ such that \eqref{RRR1} and \eqref{RRR2} are true for any $n\in\{1,\,...,\,n_{0}\}.$ 
\par Next, using \eqref{r1} and \eqref{r2},  we can verify that 
\begin{align*}
    (\mathrm{Id}-T)[S_{n}(k)g(k)e_{2i+2n}]=&R_{n+1}(g(k)e_{2i+2n})+(1-\delta^{i+n+2}_{1})S_{1}(k)S_{n}(k)g(k)e_{2i+2n+2},\\
    (\mathrm{Id}-T)[S_{n}(k)g(k)e_{2j+1-2n}]=&R_{n+1}(g(k)e_{2i+2n})+(1-\delta^{m-2}_{j-n-1})S_{1}(k)S_{n}(k)g(k)e_{2j-2n-1},
\end{align*}
%
if $2i+2n+2\leq 2m-2$ and $2j+1-2n-2\geq 1$ respectively. In particular, since $i+n+2>1$ and $i-n-1<m-2$ for any $i\in\{1,\,...,\,m-2\},$ we have that
\begin{align*}
    (\mathrm{Id}-T)[S_{n}(k)g(k)e_{2i+2n}]=&R_{n+1}(g(k)e_{2i+2n})+S_{1}(k)S_{n}(k)g(k)e_{2i+2n+2},\\
    (\mathrm{Id}-T)[S_{n}(k)g(k)e_{2j+1-2n}]=&R_{n+1}(g(k)e_{2i+2n})+S_{1}(k)S_{n}(k)g(k)e_{2j+1-2n-2},
\end{align*}
if $2i+2n+2\leq 2m-2$ and $2j+1-2n-2\geq 1$ respectively. Consequently, using \eqref{RR}, we can conclude that \eqref{RRR1} and \eqref{RRR2} holds for $n+1.$ In conclusion, \eqref{RRR1} and \eqref{RRR2} hold for all $n\in\mathbb{N}.$
\par Next, the identities \eqref{RRR1} and \eqref{RRR2} imply that
\begin{align*}
    (\mathrm{Id}-T)^{m-i-1}[g(k)e_{2i}]=&R_{m-i-1}(g(k)e_{2i})+S_{m-2-i}(k)g(k)e_{2m-2},\,\\
    (\mathrm{Id}-T)^{i}[g(k)e_{2i+1}]=&R_i(g(k)e_{2i+1})+S_{i}(k)g(k)e_{1}.
\end{align*}
Therefore, since \eqref{r1} and \eqref{r2} imply for any $g\in L^{2}_{k}(\mathbb{R},\mathbb{C})$ that
\begin{equation*}
    [(\mathrm{Id}-T)g(k)e_{1}]\in R(\vec{g}),\,[(\mathrm{Id}-T)g(k)e_{2m-2}]\in R(\vec{g}),
\end{equation*}
we conclude that
\begin{equation*}
    (\mathrm{Id}-T)^{i+1}[g(k)e_{2i+1}]\in R(\vec{g}),\,(\mathrm{Id}-T)^{m-i}[g(k)e_{2i}]\in R(\vec{g}). 
\end{equation*}
In conclusion, since $2i$ and $2i+1$ can only take values on the set $\{1,2\,...,\,2m-2\}$ and $(\mathrm{Id}-T)(R(\vec{g}))\subset R(\vec{g}),$ we have
\begin{equation*}
    (\mathrm{Id}-T)^{m-1}(\vec{g})\in R(\vec{g}),
\end{equation*}
for any $\vec{g}\in L^{2}_{k}(\mathbb{R},\mathbb{C}^{2m-2})$, whence, \eqref{eq:m-1}  holds.
\end{proof}

\begin{proof}[Proof of Lemma \ref{lem:claimR}]
First, Propositions \ref{RRR} and \ref{mainprop} imply that for any number $n\in \mathbb{N}$
\begin{equation*}
    \left(\mathrm{Id}-T\right)^{n(m-1)}(\vec{g})
\end{equation*}
is a sum of at the most $2^{n(m-1)}$ terms of the form
\begin{align*}
    \vec{z}_{i}(k)=&S_{i}(k)[\prod_{\ell=1}^{N_{i}}r_{j_{\ell}}(k+v_{\ell,i})]\tau_{f_{i,n}(v)}g_{h_{i,n}}(\pm k)\text{ or }\\
    \vec{z}_{i}(k)=&S_{i}(k)[\prod_{\ell=1}^{N_{i}}r_{j_{\ell}}({-}k-v_{\ell,i})]\tau_{f_{i,n}(v)}g_{h_{i,n}}(\pm k),
\end{align*}
for a natural number $N_{i}\in [n,n(m-1)].$ 
\par Moreover, $(\mathrm{P}1)$ of Proposition \ref{mainprop} implies that for each function $\vec{z}_{i}$ that
\begin{equation*}
    v_{\ell+1,i}-v_{\ell,i}\geq \min_{n}\frac{v_{n}-v_{n+1}}{2}>c>0.
\end{equation*}
Consequently, we can deduce using Lemma \ref{factUpp} and Proposition \ref{mainprop} that each of one of the terms $\vec{z}_{i}$ of $(\mathrm{Id}-T)^{n(m-1)}(\vec{g})$ satisfies 
\begin{align*}
    \norm{\vec{z}_{i}}_{L^{2}_{k}(\mathbb{R})}\leq & \max_{h\in\{1,2\}}\frac{C^{n} \norm{\vec{g}(k)}_{L^{2}_{k}(\mathbb{R})}}{\left(\lfloor \frac{n-2}{2}\rfloor!\right)^{2} c^{n-h}},\\
    \norm{\vec{z}_{i}}_{\mathcal{F}L^{1}}\leq & \max_{h\in\{1,2\}}\frac{m nC^{n} \norm{\vec{g}(k)}_{\mathcal{F}L^{1}}}{\left(\lfloor \frac{n-3}{2}\rfloor!\right)^{2} c^{n-2-h}}.
\end{align*}
In conclusion,
\begin{align*}
    \norm{\left(\mathrm{Id}-T\right)^{n(m-1)}(\vec{g})}_{L^{2}_{k}(\mathbb{R})}\leq & \max_{h\in\{1,2\}}\frac{2^{n(m-1)}C^{n} \norm{\vec{g}(k)}_{L^{2}_{k}(\mathbb{R})}}{\left(\lfloor \frac{n-2}{2}\rfloor!\right)^{2} c^{n-h}},\\
     \norm{\left(\mathrm{Id}-T\right)^{n(m-1)}(\vec{g})}_{\mathcal{F}L^{1}}\leq & \max_{h\in\{1,2\}}\frac{m n 2^{n(m-1)}C^{n} \norm{\vec{g}(k)}_{\mathcal{F}L^{1}}}{\left(\lfloor \frac{n-3}{2}\rfloor!\right)^{2} c^{n-2-h}},
\end{align*}
since $(\mathrm{Id}-T)^{n(m-1)}$ is a sum of $2^{n(m-1)}$ functions $\vec{z}_{i},$ from which we deduce that Lemma \ref{lem:claimR} holds for
\begin{equation*}
    C(m)=\frac{2^{m-1}C}{(\min{c,1})}
\end{equation*}as desired.
\end{proof}

\section{Analysis of dispersive map $\mathcal{S}(t)(\vec{\phi})$}\label{sec:dispersivemap}
First, we introduce the definition below which will be useful in the computations of the estimates in this section.
\begin{definition}
If $\vec{f},\,\vec{g}\in L^{2}_{k}(\mathbb{R},\mathbb{C}^{2}),$ we say that
$
    \vec{f}(k)\cong_{2} \vec{g}(k),
$
if $\min_{\ell}y_{\ell}-y_{\ell+1}>1$ is large enough and there exists a positive $\beta$ such that 
\begin{align*}
    \norm{\vec{f}(k)-\vec{g}(k)}_{L^{2}_{k}(\mathbb{R})}\lesssim & e^{{-}\beta \min_{\ell}(y_{\ell}-y_{\ell+1})}\left[\max_{1\leq \ell\leq m-1 }\norm{\begin{bmatrix}
        \phi_{1,\ell}(k)\\
        \phi_{2,\ell}(k)
    \end{bmatrix}}_{L^{2}_{k}(\mathbb{R})} \right],\\
    \norm{(1+\vert x\vert)\left[\widehat{f}(x)-\widehat{g}(x)\right]}_{L^{1}_{x}(\mathbb{R})}\lesssim & e^{{-}\beta \min_{\ell}(y_{\ell}-y_{\ell+1})}\left[\max_{1\leq \ell\leq m-1 }\norm{\begin{bmatrix}
        \phi_{1,\ell}(k)\\
        \phi_{2,\ell}(k)
    \end{bmatrix}}_{L^{2}_{k}(\mathbb{R})} \right],
\end{align*}
where $\widehat{f}$ and $\widehat{g}$ is the free Fourier transform of $\vec{f}$ and $\vec{g}$ respectively.
\end{definition}

\subsection{Invertibility of the dispersive map }

Next, from \cite{dispa}, we consider the map $(B_{1},\,...,\,B_{2m-2}):L^{2}_{x}(\mathbb{R},\mathbb{C}^{2})\to L^{2}_{x}(\mathbb{R},\mathbb{C}^{2m})$ defined by
\begin{align*}
    B_{1}(h)\coloneqq  & \sigma_{3} F^{*}_{\omega_{1}}\left[\sigma_{3} 1_{\{x\geq \frac{y_{2}-y_{1}}{2}\}}(x)\tau_{y_{1}}\left(e^{{-}i\frac{v_{1}x}{2}\sigma_{3}}h(x)\right)\right] (k),\\
B_{2n}(h) \coloneqq  & \sigma_{3} G^{*}_{\omega_{n+1}}\left[\sigma_{3} 1_{\{\frac{y_{n+2}-y_{n+1}}{2}\leq x\leq \frac{y_{n}-y_{n+1}}{2}\}}(x)\tau_{y_{n+1}}\left(e^{{-}i\frac{v_{n+1}x}{2}\sigma_{3}}h(x)\right)\right](k)\text{ if $n+1<m,$}
    \\
    B_{2n+1}(h) \coloneqq  & \sigma_{3} F^{*}_{\omega_{n+1}}\left[\sigma_{3} 1_{\{\frac{y_{n+2}-y_{n+1}}{2}\leq x\leq \frac{y_{n}-y_{n+1}}{2}\}}(x)\tau_{y_{n+1}}\left(e^{{-}i\frac{v_{n+1}x}{2}\sigma_{3}}h(x)\right)\right](k) \text{ if $n+1<m,$}\\
    B_{2m-2}(h)\coloneqq  & \sigma_{3} G^{*}_{\omega_{m}}\left[\sigma_{3} 1_{\{x\leq \frac{y_{m-1}-y_{m}}{2}\}}(x)\tau_{y_{m}}\left(e^{{-}i\frac{v_{m}x}{2}\sigma_{3}}h(x)\right)\right](k),
\end{align*}
for any function $h(x)\in L^{2}_{x}(\mathbb{R},\mathbb{C}^{2}).$ 
\par Next, consider the following orthogonal projections $P_{\pm}:L^{2}_{x}(\mathbb
b{R},\mathbb{C}^{2})\to L^{2}_{x}(\mathbb
b{R},\mathbb{C}^{2})$
\begin{align}\label{P+}
P_{+}(\vec{f})(x)\coloneqq & \frac{1}{2\pi}\int_{{-}\infty}^{0}e^{{-}ikx}\left[\int_{\mathbb{R}}e^{iky}\vec{f}(y)\,dy\right]dk,\\ \label{P-}
P_{-}(\vec{f})(x)\coloneqq & \frac{1}{2\pi}\int_{0}^{{+}\infty}e^{{-}ikx}\left[\int_{\mathbb{R}}e^{iky}\vec{f}(y)\,dy\right]dk.
\end{align}
In particular, the projections $P_{+}$ and $P_{-}$ are exactly the orthogonal projections of $L^{2}_{x}(\mathbb{R})$ onto the Hardy spaces $H^{2}(\mathbb{C}_{+})$ and $H^{2}(\mathbb{C}_{-})$ respectively, see Definition $3.1$ and Theorem $3.2$ of \cite{dispa} for more details. We will also use the following Proposition.
\begin{lemma}\label{+-interact}
Let $f_{\pm}$ be in $H^{2}(\mathbb{C}_{\pm}),$
and let $s(k),\,r(k)$ be two $C^{1}$ functions on the strip $$\vert\I k \vert \leq \delta$$ satisfying 
\begin{equation*}
 \left\vert 
\frac{d^{j}}{dk^{j}}r(k)\right\vert +\left\vert \frac{d^{j}}{dk^{j}}\left[1-s(k)\right]\right\vert\lesssim \frac{1}{\left(1+\vert k\vert\right)^{1+j}} \text{, for any $j\in\{0,1\}.$}    
\end{equation*}
 If $y_{0}>1,\,h_{0}\in\mathbb{R},$ then
\begin{align}\label{p-1e}
\norm{ P_{\mp}\left(e^{\pm iy_{0}k}\left[s(k+h_{0})f_{\pm}(k)\right]\right)(x)}_{L^{2}(\mathbb{R},dx)}\leq &Ce^{{-}\frac{99}{100}\delta y_{0}}\norm{f_\pm}_{H^{2}(\mathbb{C}_\pm)},\\ \label{p-2e}
     \norm{ P_{\mp}\left(e^{\pm iy_{0}k}\left[r(k+h_{0})f_{\pm}(k)\right]\right)(x)}_{L^{2}(\mathbb{R},dx)}\leq &Ce^{{-}\frac{99}{100}\delta y_{0}}\norm{f_\pm}_{H^{2}(\mathbb{C}_\pm)}.
\end{align}
\end{lemma}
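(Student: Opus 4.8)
The plan is to combine the holomorphy of $s$ and $r$ on the strip $\{|\I k|\le\delta\}$ with the exponential weight $e^{\pm iy_0 k}$ by shifting a contour by an amount $\delta'=\tfrac{99}{100}\delta$, thereby reducing both \eqref{p-1e} and \eqref{p-2e} to the elementary bound ``$\|P_{\mp}F\|_{L^2}\le \|F(\,\cdot\,\pm i\delta')\|_{L^2}$ when $F$ is holomorphic and $L^2$ on the appropriate half‑strip''. By the $k\mapsto -k$/conjugation symmetry between the two signs it suffices to treat the ``$+$'' case, namely to estimate $P_{-}\big(e^{iy_0k}\,\mu(k+h_0)\,f_{+}(k)\big)$ for $\mu\in\{s,r\}$ and $f_{+}\in H^{2}(\mathbb{C}_{+})$.

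First I would record two elementary facts, both with the Fourier convention of \eqref{P+}--\eqref{P-} (so that $\widehat g(k)=\int e^{iky}g(y)\,dy$ and $\|g\|_{L^2}^2=\tfrac1{2\pi}\|\widehat g\|_{L^2}^2$). (i) If $F\in L^{2}(\mathbb{R})$ extends holomorphically to $\{0\le\I k\le\delta'\}$ with $\sup_{0\le b\le\delta'}\|F(\,\cdot\,+ib)\|_{L^2}<\infty$, then for $k\ge 0$ one may push the defining contour of $\widehat F$ up by $\delta'$ to get $\widehat F(k)=e^{-k\delta'}\,\widehat{F(\,\cdot\,+i\delta')}(k)$; inserting this into \eqref{P-} and using Plancherel gives
\[
\|P_{-}F\|_{L^2}^{2}=\frac{1}{2\pi}\int_{0}^{\infty}e^{-2k\delta'}\big|\widehat{F(\,\cdot\,+i\delta')}(k)\big|^{2}\,dk\le\|F(\,\cdot\,+i\delta')\|_{L^2}^{2},
\]
and symmetrically $\|P_{+}F\|_{L^2}\le\|F(\,\cdot\,-i\delta')\|_{L^2}$ when $F$ is holomorphic on $\{-\delta'\le\I k\le 0\}$ with uniformly bounded horizontal slices. (ii) For $f_{+}\in H^{2}(\mathbb{C}_{+})$ one has, since $\widehat{f_{+}}$ is supported in $(-\infty,0]$, that $\widehat{f_{+}(\,\cdot\,+ib)}(k)=e^{kb}\widehat{f_{+}}(k)$, whence $\|f_{+}(\,\cdot\,+ib)\|_{L^2}\le\|f_{+}\|_{H^{2}(\mathbb{C}_{+})}$ for every $b\ge 0$; likewise $\|f_{-}(\,\cdot\,-ib)\|_{L^2}\le\|f_{-}\|_{H^{2}(\mathbb{C}_{-})}$ for $b\ge 0$.

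Then I would set $F(k)=e^{iy_0k}\,\mu(k+h_0)\,f_{+}(k)$. Since $\mu$ is holomorphic on $\{|\I k|\le\delta\}\supset\{0\le\I k\le\delta'\}$, $h_0$ is real, and $f_{+}$ is holomorphic in the open upper half-plane and continuous up to $\mathbb{R}$, the function $F$ is holomorphic on the half-strip $\{0\le\I k\le\delta'\}$; moreover, by the hypothesis with $j=0$, $|\mu(\zeta)|\le C$ uniformly for $\zeta$ in the strip (for $\mu=s$ because $|s|\le 1+|1-s|\le 1+C$, and for $\mu=r$ directly). On the line $\I k=\delta'$ we have $|e^{iy_0k}|=e^{-y_0\delta'}$, so facts (i) and (ii) give
\[
\big\|P_{-}\big(e^{iy_0k}\mu(k+h_0)f_{+}(k)\big)\big\|_{L^2}\le\|F(\,\cdot\,+i\delta')\|_{L^2}\le C\,e^{-y_0\delta'}\|f_{+}(\,\cdot\,+i\delta')\|_{L^2}\le C\,e^{-\frac{99}{100}\delta y_0}\|f_{+}\|_{H^{2}(\mathbb{C}_{+})},
\]
which is exactly \eqref{p-1e}--\eqref{p-2e} in the ``$+$'' case; the ``$-$'' case is the mirror argument, shifting down by $\delta'$ and using $f_{-}$. (Equivalently, one could argue on the Fourier side: $\widehat{P_{-}F}(\xi)=1_{\{\xi\ge 0\}}\,\widehat{\mu(\,\cdot\,+h_0)f_{+}}(\xi+y_0)$, while holomorphy of $\mu$ forces $|\widehat{\mu(\,\cdot\,+h_0)}(\xi)|\lesssim e^{-\delta'|\xi|}$ away from $\xi=0$ and $\widehat{f_{+}}$ is supported on $(-\infty,0]$, so the convolution produces the factor $e^{-\delta' y_0}$.)

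The only genuinely delicate point is the justification of the contour shift in fact (i) — that the two vertical segments at $\Re k=\pm N$ do not contribute as $N\to\infty$. I expect to dispatch this by a routine density argument: prove the estimate first for $f_{\pm}$ with compactly supported (free) Fourier transform, for which $F$ decays rapidly along horizontal lines so Cauchy's theorem applies with no boundary terms, and then extend to general $f_{\pm}\in H^{2}(\mathbb{C}_{\pm})$ using density, boundedness of $P_{\mp}$ on $L^{2}$, and the fact that the final bound involves only $\|f_{\pm}\|_{H^{2}}$. Everything else is bookkeeping, and the rate $\tfrac{99}{100}\delta$ is simply the chosen shift height; any $\delta'<\delta$ would work equally well.
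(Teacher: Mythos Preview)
Your contour–shift argument is exactly the mechanism behind the paper's proof (which defers to Lemma~3.4 of \cite{dispa}): once $s,r$ are holomorphic and bounded on the strip, push the integration line up by $\delta'=\tfrac{99}{100}\delta$ and collect the factor $e^{-y_0\delta'}$ from $e^{iy_0k}$. The computation you give with $\widehat F(\xi)=e^{-\xi\delta'}\widehat{F(\cdot+i\delta')}(\xi)$ for $\xi>0$, the bound $\|P_-F\|_{L^2}\le\|F(\cdot+i\delta')\|_{L^2}$, and $\|f_+(\cdot+i\delta')\|_{L^2}\le\|f_+\|_{H^2(\mathbb C_+)}$ is correct in the paper's Fourier convention, and your density argument for $f_\pm$ to justify dropping the vertical segments is the standard fix.

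There is one genuine gap relative to the paper. You assert ``$\mu$ is holomorphic on $\{|\I k|\le\delta\}$'' and ``$|\mu(\zeta)|\le C$ uniformly for $\zeta$ in the strip'', but the hypothesis only says $s,r$ are $C^{1}$ on the strip, with the decay bound stated via $\tfrac{d^j}{dk^j}$. The paper is explicit that this is not taken for granted: its proof first invokes Remark~3.5 of \cite{dispa} to \emph{reduce} to the case where $r$ and $s$ are analytic and bounded on the strip, and only then runs the contour shift. Your density argument addresses $f_\pm$, not the coefficients $s,r$, so it does not cover this reduction. If you read ``$C^1$ on the strip'' as ``complex $C^1$'' (hence holomorphic) with the decay holding throughout the strip, the objection disappears; but since the paper itself flags the step, you should either supply the approximation of $s,r$ by analytic bounded functions (e.g.\ convolution with a narrow Gaussian in $k$, which is entire, preserves the decay, and converges in $L^\infty$ on $\mathbb R$), or at least acknowledge that this reduction is where the stated hypothesis actually enters.
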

\begin{proof}
See the proof of Lemma $3.4$ from \cite{dispa}. In particular, using Remark $3.5$ of \cite{dispa}, we can restrict to the case where $r$ and $s$ are analytics and bounded on the strip having $\vert \I k\vert\leq \delta.$    
\end{proof}
\par Furthermore, using Remark $4.2$ from \cite{dispa} and Lemma \ref{leper}, we can verify when $\min_{\ell}v_{\ell}-v_{\ell+1}>0$ and $\min_{\ell}y_{\ell}-y_{\ell+1}>1$ is large enough that $B\left(\mathcal{S}(\vec{\phi})(0,\Diamond)\right)(k)$ can be estimate by the following functions below
\begin{align}\label{systemB}
B_{1}\left(\mathcal{S}(\vec{\phi})(0,\Diamond)\right)(k)\cong_{2}  & e^{i\frac{k(y_{1}-y_{2})}{2}}P_{-}\left(e^{i\frac{\Diamond(y_{1}+y_{2})}{2}}
 \begin{bmatrix}
    \phi_{1}\left(\Diamond+\frac{v_{1}}{2}\right)\\
    \phi_{2}\left(\Diamond-\frac{v_{1}}{2}\right)
 \end{bmatrix}\right) (k)\nonumber  
\\ & {-}r_{\omega_{1}}({-}k)e^{{-}i\frac{k(y_{1}-y_{2})}{2}}\left[P_{+}\left(e^{i\frac{\Diamond (y_{1}+y_{2})}{2}}\begin{bmatrix}
    \phi_{1}\left(\Diamond+\frac{v_{1}}{2}\right)\\
    \phi_{2}\left(\Diamond-\frac{v_{1}}{2}\right)
 \end{bmatrix}\right)\right]({-}k),\\ \label{b2n}
  B_{2n}\left(\mathcal{S}(\vec{\phi})(0,\Diamond)\right)(k)\cong_{2} & e^{{-}i\frac{k(y_{n}-y_{n+1})}{2}}P_{+}\left(\begin{bmatrix}
    e^{i\frac{(y_{n}+y_{n+1})\Diamond}{2}}\phi_{1,n}(\Diamond+\frac{v_{n+1}}{2})\\ 
    e^{i\frac{(y_{n}+y_{n+1})\Diamond}{2}}\phi_{2,n}(\Diamond-\frac{v_{n+1}}{2})
    \end{bmatrix}\right)(k)\\ \nonumber
    &{-}r_{n+1}(k)e^{i\frac{k(y_{n}-y_{n+1})}{2}}P_{-}\left(\begin{bmatrix}
    e^{i\frac{(y_{n}+y_{n+1})\Diamond}{2}}\phi_{1,n}(\Diamond+\frac{v_{n+1}}{2})\\
    e^{i\frac{(y_{n}+y_{n+1})\Diamond}{2}}\phi_{2,n}(\Diamond-\frac{v_{n+1}}{2})
    \end{bmatrix}\right)({-}k)\\ \nonumber
    &{-}s_{n+1}(k)e^{{-}i\frac{k(y_{n+2}-y_{n+1})}{2}}P_{+}\left(\begin{bmatrix}
e^{i\frac{(y_{n+1}+y_{n+2})\Diamond}{2}}\phi_{1,n+1}(\Diamond+\frac{v_{n+1}}{2})\\
e^{i\frac{(y_{n+1}+y_{n+2})\Diamond}{2}}\phi_{2,n+1}(\Diamond-\frac{v_{n+1}}{2})
\end{bmatrix}\right)(k),\\ \label{b2n+1}
 B_{2n+1}\left(\mathcal{S}(\vec{\phi})(0,\Diamond)\right)(k)\cong_{2} & e^{{-}i\frac{k(y_{n+2}-y_{n+1})}{2}}P_{-}\left(\begin{bmatrix}
e^{i\frac{(y_{n+1}+y_{n+2})\Diamond}{2}}\phi_{1,n+1}(\Diamond+\frac{v_{n+1}}{2})\\
e^{i\frac{(y_{n+1}+y_{n+2})\Diamond}{2}}\phi_{2,n+1}(\Diamond-\frac{v_{n+1}}{2})
\end{bmatrix}
\right)(k)\\ \nonumber
    &{-}r_{n+1}({-}k)e^{i\frac{k(y_{n+2}-y_{n+1})}{2}}P_{+}\left(\begin{bmatrix}
e^{i\frac{(y_{n+1}+y_{n+2})\Diamond}{2}}\phi_{1,n+1}(\Diamond+\frac{v_{n+1}}{2})\\
e^{i\frac{(y_{n+1}+y_{n+2})\Diamond}{2}}\phi_{2,n+1}(\Diamond-\frac{v_{n+1}}{2})
\end{bmatrix}\right)({-}k)\\ \nonumber
    &{-}s_{n+1}({-}k)e^{{-}i\frac{k(y_{n}-y_{n+1})}{2}}P_{-}\left(
    \begin{bmatrix}
e^{i\frac{(y_{n}+y_{n+1})k}{2}}\phi_{1,n}(k+\frac{v_{n+1}}{2})\\
e^{i\frac{(y_{n}+y_{n+1})k}{2}}\phi_{2,n}(k-\frac{v_{n+1}}{2})
\end{bmatrix}\right),\\ \label{b2m-2}
 B_{2m-2}\left(\mathcal{S}(\vec{\phi})(0,\Diamond)\right)(k)\cong_{2} & e^{{-}ik\frac{(y_{m-1}-y_{m})}{2}}P_{+}\left(\begin{bmatrix}
      e^{i\frac{(y_{m-1}+y_{m})\Diamond}{2}}\phi_{1,m-1}(\Diamond+\frac{v_{m}}{2})\\
        e^{i\frac{(y_{m-1}+y_{m})\Diamond}{2}}\phi_{2,m-1}(\Diamond-\frac{v_{m}}{2})
  \end{bmatrix}\right)(k)\\ \nonumber
  &{-}r_{m}(k)e^{ik\frac{(y_{m-1}-y_{m})}{2}}\left[P_{-}\left(\begin{bmatrix}
      e^{i\frac{(y_{m-1}+y_{m})\Diamond}{2}}\phi_{1,m-1}(\Diamond+\frac{v_{m}}{2})\\
        e^{i\frac{(y_{m-1}+y_{m})\Diamond}{2}}\phi_{2,m-1}(\Diamond-\frac{v_{m}}{2})
  \end{bmatrix}\right)\right]({-}k),
\end{align}
for more details, see Lemma $4.9$ from \cite{dispa}.
\par Next, aiming an application of Lemma \ref{lem:claimR}, on the estimates \eqref{systemB}-\eqref{b2m-2}, we consider from now on the following notation 
\begin{align}\label{BSnotation}
    \begin{bmatrix}
        B_{\ell,1}(k)\\
        B_{\ell,2}(k)
    \end{bmatrix}\coloneqq & B_{\ell}\left(\mathcal{S}(\vec{\phi})(0,\Diamond)\right)(k), \\
    \begin{bmatrix}
        g_{2\ell-1,1}(k)\\
        g_{2\ell-1,2}(k)
    \end{bmatrix}\coloneqq & P_{-}\left(e^{i\frac{k(y_{\ell}+y_{\ell+1})}{2}}\begin{bmatrix}
        \phi_{1,\ell}(k)\\
        \phi_{2,\ell}(k)
    \end{bmatrix}\right) \text{, for any $\ell\in [m-1]$}\\
    \begin{bmatrix}
        g_{2\ell,1}(k)\\
        g_{2\ell,2}(k)
    \end{bmatrix}\coloneqq & P_{+}\left(e^{i\frac{k(y_{\ell}+y_{\ell+1})}{2}}\begin{bmatrix}
        \phi_{1,\ell}(k)\\
        \phi_{2,\ell}(k)
    \end{bmatrix}\right) \text{, for any $\ell\in [m-1].$}
\end{align}
In particular, the estimates \eqref{systemB}, \eqref{b2n}, \eqref{b2n+1} and \eqref{b2m-2} imply the existence of real constants $\theta_{c_{\ell},j}$ and $\theta_{d_{\ell},j}$ satisfying the following equivalence relations for any $j\in\{1,2\}$ and $\ell\in [2m-2].$
\begin{align}\label{systemBB}
    e^{\frac{{-}ik(y_{1}-y_{2})}{2}}e^{\frac{({-}1)^{j}i v_{1}y_{1}}{2}}B_{1,j}\left(k+\frac{({-}1)^{j}v_{1}}{2}\right)\cong_{2} & g_{1,j}(k) \\& -r_{1}\left({-}k{+} \frac{({-}1)^{j} v_{1}}{2}\right)e^{{-}ik(y_{1}-y_{2})}e^{i\theta_{c_{1},j}}g_{2,j}({-}k-  ({-}1)^{j}v_{1}),
\end{align}
\begin{multline} \nonumber
    e^{\frac{ik(y_{n}-y_{n+1})}{2}}e^{\frac{({-}1)^{j+1}iv_{n+1}y_{n+1}}{2}}B_{2n,j}\left(k+\frac{({-}1)^{j}v_{n+1}}{2}\right)\\
    \begin{aligned}
    \cong_{2} & g_{2n,j}(k)\\&{-}r_{n+1}\left(k+ \frac{({-}1)^{j}v_{n+1}}{2}\right)e^{ik(y_{n}-y_{n+1})}e^{i\theta_{c_{2n},j}}g_{2n-1,j}({-}k- ({-}1)^{j} v_{n+1})\\&{-}s_{n+1}\left(k+ \frac{({-}1)^{j}v_{n+1}}{2}\right)e^{\frac{ik(y_{n}-y_{n+2})}{2}}e^{i\theta_{d_{2n},j}}g_{2n+2,j}(k),
    \end{aligned}
  \end{multline}  
  \begin{multline}
    e^{\frac{ik(y_{n+2}-y_{n+1})}{2}}e^{\frac{({-}1)^{j+1}i v_{n+1}y_{n+1}}{2}}B_{2n+1,j}\left(k+\frac{({-}1)^{j}v_{n+1}}{2}\right)\\
    \begin{aligned}
    \cong_{2} & g_{2n+1,j}(k)\\& {-}r_{n+1}\left({-}k+ \frac{({-}1)^{j+1}v_{n+1}}{2}\right)e^{ik(y_{n+2}-y_{n+1})}e^{i\theta_{c_{2n+1},j}}g_{2n+2,j}\left({-}k- ({-}1)^{j}v_{n+1}\right)\\ 
    & {-}s_{n+1}\left({-k}+\frac{({-}1)^{j+1}v_{n+1}}{2}\right)e^{\frac{ik(y_{n+2}-y_{n})}{2}}e^{i\theta_{d_{2n+1},j}}g_{2n-1,j}(k),\,...,\\ 
    \end{aligned}
\end{multline}
   \begin{multline*}
    e^{\frac{ik(y_{m-1}-y_{m})}{2}}e^{\frac{({-}1)^{j+1}v_{m}y_{m}}{2}}B_{2m-2,j}\left(k+\frac{({-}1)^{j}v_{m}}{2}\right)\\
    \begin{aligned}
    \cong_{2}& g_{2m-2,j}(k)\\&{-}r_{m}\left( k+ \frac{({-}1)^{j}v_{m}}{2}\right)e^{ik(y_{m-1}-y_{m})}e^{i\theta_{c_{2m-2},j}}g_{2m-3,j}({-}k+({-}1)^{j+1} v_{m}).
    \end{aligned}
\end{multline*}
Moreover, Lemma $B.4$ of \cite{dispa} implies that
\begin{align}\label{estSS1}
    \max_{\ell\in [2m-2],j\in\{1,2\}}\norm{B_{\ell,j}(k)}_{L^{2}_{k}(\mathbb{R})}\lesssim &\norm{\mathcal{S}(\vec{\phi})(0,x)}_{L^{2}_{x}(\mathbb{R})},\\ \label{estSS2}
    \max_{\ell\in [2m-2],j\in\{1,2\}}\norm{B_{\ell,j}(k)}_{\mathcal{F}L^{1}}\lesssim &\norm{\mathcal{S}(\vec{\phi})(0,x)}_{L^{1}_{x}(\mathbb{R})},\\ \label{estSS3}
    \max_{\ell\in [2m-2],j\in\{1,2\}}\norm{\frac{d}{dk}B_{\ell,j}(k)}_{\mathcal{F}L^{1}}\lesssim &\max_{\ell\in [m]}\norm{\chi_{\ell}(0,x)\langle x \rangle\mathcal{S}(\vec{\phi})(0,x)}_{L^{1}_{x}(\mathbb{R})}.
\end{align}
\par Next, since each pair of functions $r_{\ell}$ and $s_{\ell}$ satisfy \eqref{decayrs1}, we can verify for each $h\in\{1,2\}$ that the right-hand side of the estimates \eqref{systemBB} is an linear bounded map $T_{h}:L^{2}(\mathbb{R},\mathbb{C}^{2m-2})\to L^{2}(\mathbb{R},\mathbb{C}^{2m-2})$ of \eqref{form2} given by
\begin{equation*}
    T_{j}=\mathrm{Id}-R_{j}
\end{equation*}
such that $R_{j}$ satisfies Claim $(\mathrm{R}),$ see \eqref{claimR} for more details. 
\par Consequently, using Claim $(\mathrm{R})$ in the linear equations on the right-hand side of \eqref{systemBB}, we can deduce when $\min_{\ell}y_{\ell}-y_{\ell+1}>1$ is large enough and $m\in\mathbb{N}_{\geq 1}$ that 
\begin{equation}\label{Id-Bestimate}
    \max_{h\in\{1,2\}}\norm{(\mathrm{Id}-T_{h})^{j(m-1)}}_{L^{2}\to L^{2}}+ \max_{h\in\{1,2\}}\norm{(\mathrm{Id}-T_{h})^{j(m-1)}}_{\mathcal{F}L^{1}\to \mathcal{F}L^{1}}\leq \min\left(\frac{jmC(m)^{j}}{j!},\frac{jmC(m)^{j}}{(\frac{j}{2}!)^{2}}\right).
\end{equation}
In particular, \eqref{Id-Bestimate} implies that
\begin{equation*}
    \max_{B\in\{\mathcal{F}L^{1},L^{2}\}}\max_{h\in\{1,2\}}\sum_{\ell=1}^{{+}\infty} \norm{(\mathrm{Id}-T_{h})^{\ell}}_{B\to B}<{+}\infty,
\end{equation*}
from which we deduce that $T_{h}:B\to B$ is a linear homeomorphism for each Banach Spaces $B$ of $\{\mathcal{F}L^{1},L^{2}\}.$
\par Therefore, we deduce the existence of a constant $C(\min_{\ell}v_{\ell}-v_{\ell+1})>0$ depending only on $\min_{\ell}v_{\ell}-v_{\ell+1}$ satisfying the following estimate.
\begin{align}\label{fl1coercive}
    \max_{\ell\in [2m]}\norm{g_{\ell}(k)}_{\mathcal{F}L^{1}}\leq & C(\min_{\ell}v_{\ell}-v_{\ell+1})\min_{h\in\{1,2\}}\norm{T_{h}(\vec{g})(k)}_{\mathcal{F}L^{1}},\\ \label{l2coercccc}
    \max_{\ell\in [2m]}\norm{g_{\ell}(k)}_{L^{2}}\leq & C(\min_{\ell}v_{\ell}-v_{\ell+1})\min_{h\in\{1,2\}}\norm{T_{h}(\vec{g})(k)}_{L^{2}}.
\end{align}
Similarly, applying the partial derivative $\partial_{k}$ on the system of equations \eqref{systemB}, we can obtain using \eqref{fl1coercive} that there exists a constant $C(\min_{\ell}v_{\ell}-v_{\ell+1})>0$ depending on $\min_{\ell}v_{\ell}-v_{\ell+1}$ satisfying
\begin{equation}\label{dfl1coercive}
    \max_{\ell\in [2m]}\norm{\partial_{k}g_{\ell}(k)}_{\mathcal{F}L^{1}}\leq C(\min_{\ell}v_{\ell}-v_{\ell+1})\min_{h\in\{1,2\}}\left[(\max_{j}y_{j}-y_{j+1})\norm{T_{h}(\vec{g})(k)}_{\mathcal{F}L^{1}}+\norm{\partial_{k}T_{h}(\vec{g})(k)}_{\mathcal{F}L^{1}}\right].
\end{equation}
\par In conclusion, using the estimates \eqref{estSS1}-\eqref{estSS2} and the coercive inequalities \eqref{fl1coercive}, \eqref{l2coercccc}, we can deduce when $\min_{\ell}y_{\ell}-y_{\ell+1}$ is large enough that
\begin{align}
    \norm{\mathcal{S}(0)(\vec{\phi})}_{L^{2}_{x}(\mathbb{R})}\geq & c(\min_{\ell}v_{\ell}-v_{\ell+1})\max_{j\in[m]} \norm{\vec{\phi}_{j}}_{L^{2}_{x}(\mathbb{R})},\\ \label{dss000}
    \max_{\ell}\norm{\langle x-y_{\ell} \rangle \chi_{\ell}(x)\mathcal{S}(0)(\vec{\phi})}_{L^{2}_{x}(\mathbb{R})}\geq & c(\min_{\ell}v_{\ell}-v_{\ell+1})\max_{j\in[m]} \norm{\vec{\phi}_{j}}_{\mathcal{F}L^{1}_{x}(\mathbb{R})},
\end{align}
such that $c(\min_{\ell}v_{\ell}-v_{\ell+1})>0$ is a parameter depending only on $\min_{\ell}v_{\ell}-v_{\ell+1}>0.$ Moreover, when the hypothesis $(H4)$ is true, we can use Lemma $B.4$ from \cite{dispa} to improve the estimate \eqref{dss000} by the following one
\begin{equation}\label{dss0}
   \norm{\mathcal{S}(0)(\vec{\phi})}_{L^{1}_{x}(\mathbb{R})}\geq  c(\min_{\ell}v_{\ell}-v_{\ell+1})\max_{j\in[m]} \norm{\vec{\phi}_{j}}_{\mathcal{F}L^{1}_{x}(\mathbb{R})} 
\end{equation}
\par Furthermore, using estimates \eqref{estSS2}, \eqref{estSS3} and \eqref{dfl1coercive}, we can deduce similarly to the proof of Lemma $4.2$ from \cite{dispa} the following estimate.
\begin{multline}\label{dS(0)}
\max_{\ell}\norm{\chi_{\ell,n-1}(0,x)\langle x-y_{\ell} \rangle \mathcal{S}(0)(\vec{\phi})(x)}_{L^{1}_{x}(\mathbb{R})}+\max_{\ell}(y_{\ell}-y_{\ell+1})\norm{\mathcal{S}(0)(\vec{\phi})(x)}_{L^{1}_{x}(\mathbb{R})}\\
\geq  c(\min_{\ell}v_{\ell}-v_{\ell+1})\max_{j\in[m]} \norm{\partial_{k}\vec{\phi}_{j}}_{\mathcal{F}L^{1}_{x}(\mathbb{R})}.
\end{multline}
\par Furthermore, using Definition \ref{s0def} and the asymptotic behavior of $\hat{G}_{\omega}$ and $\hat{F}_{\omega},$ we can verify using Fourier analysis $\mathcal{S}(0)$ satisfies the following inequalities.
\begin{align*}
    \norm{\mathcal{S}(\vec{\phi})(0,x)}_{H^{n}_{x}(\mathbb{R})}\lesssim &\max_{\ell\in [m]}\norm{\langle k \rangle^{n}\begin{bmatrix}
        \phi_{1,\ell}(k)\\
        \phi_{2,\ell}(k)
    \end{bmatrix}} \text{, for all $n\in\{0,1,2\}$}\\
    \norm{\mathcal{S}(\vec{\phi})(0,x)}_{L^{1}_{x}(\mathbb{R})}\lesssim & \Bigg[ \max_{\ell\in [m],j\in\{1,2\}}\norm{\widehat{\phi_{\ell,j}}(x)}_{L^{1}_{x}(\mathbb{R})}\\&{+} \max_{\ell\in\{1,m\}}\norm{\hat{G}_{\omega_{\ell}}\left(e^{iy_{\ell}k}\begin{bmatrix}
      \phi_{1,\ell}(k+\frac{v_{\ell}}{2}))\\
      \phi_{2,\ell}(k-\frac{v_{\ell}}{2})) 
   \end{bmatrix}\right)(x)}_{L^{1}_{x}(\mathbb{R})}\Bigg],\\
   \norm{\chi_{\ell}(0,x)\langle x-y_{\ell}\rangle\mathcal{S}(\vec{\phi})(0,x)}_{L^{1}_{x}(\mathbb{R})}\lesssim  & \langle y_{1}-y_{m}\rangle\max_{\ell\in [m],j\in\{1,2\}}\norm{\widehat{\phi_{\ell,j}}(x)}_{L^{1}_{x}(\mathbb{R})}\\
    & {+} \langle y_{1}-y_{m}\rangle \max_{\ell\in\{1,m\}}\norm{\hat{G}_{\omega_{\ell}}\left(e^{iy_{\ell}k}\begin{bmatrix}
      \phi_{1,\ell}(k+\frac{v_{\ell}}{2}))\\
      \phi_{2,\ell}(k-\frac{v_{\ell}}{2})) 
   \end{bmatrix}\right)(x)}_{L^{1}_{x}(\mathbb{R})}\\
   &{+}  \max_{\ell\in [m],j\in\{1,2\}}\norm{x\widehat{\phi_{\ell,j}}(x)}_{L^{1}_{x}(\mathbb{R})}\\
   &{+}\max_{\ell\in\{1,m\}}\norm{\langle x\rangle \hat{G}_{\omega_{\ell}}\left(e^{iy_{\ell}k}\begin{bmatrix}
      \phi_{1,\ell}(k+\frac{v_{\ell}}{2}))\\
      \phi_{2,\ell}(k-\frac{v_{\ell}}{2})) 
   \end{bmatrix}\right)(x)}_{L^{1}_{x}(\mathbb{R})}.
\end{align*}
As a consequence, for any set of speeds $v_{\ell}$ satisfying $\min_{\ell}v_{\ell}-v_{\ell+1}>0,$  we can verify that the map $\mathcal{S}(0)$ satisfies the following proposition, which extends the results in Theorem $4.2$ from \cite{dispa} for all the cases without large separation in the speeds.
\begin{proposition}\label{Coercivityproperty}
   Let $(\mathrm{H}1)-(\mathrm{H}3)$ be true. There exists $M(\vec{v})>1,\, C>1,\,c>0$ depending uniquely on $\min_{\ell}v_{\ell}-v_{\ell+1}>0$ such that the map $\mathcal{S}$ satisfies for any $t\geq 0$ and any $\vec{\phi}\in L^{2}_{k}(\mathbb{R})$ the following estimates.
\begin{align}\label{est01}
   c\norm{\mathcal{S}(\vec{\phi})(0,x)}_{H^{n}_{x}(\mathbb{R})} \leq \max_{\ell\in [m]}\norm{\langle k \rangle^{n}\begin{bmatrix}
        \phi_{1,\ell}(k)\\
        \phi_{2,\ell}(k)
    \end{bmatrix}}\leq &C\norm{\mathcal{S}(\vec{\phi})(0,x)}_{H^{n}_{x}(\mathbb{R})} \text{, for any $n\in\{0,1,2\}.$}
    \end{align}
    In addition, if $(\mathrm{H}4)$ is true, then
    \begin{align}
    \label{est02}
   c \max_{\ell\in [m],j\in\{1,2\}}\norm{\widehat{\phi_{\ell,j}}(x)}_{L^{1}_{x}(\mathbb{R})}&\leq  \norm{\mathcal{S}(\vec{\phi})(0,x)}_{L^{1}_{x}(\mathbb{R})}\leq  C \Bigg[ \max_{\ell\in [m],j\in\{1,2\}}\norm{\widehat{\phi_{\ell,j}}(x)}_{L^{1}_{x}(\mathbb{R})}\\&{+} \max_{\ell\in\{1,m\}}\norm{\hat{G}_{\omega_{\ell}}\left(e^{iy_{\ell}k}\begin{bmatrix}
      \phi_{1,\ell}(k+\frac{v_{\ell}}{2}))\\
      \phi_{2,\ell}(k-\frac{v_{\ell}}{2})) 
   \end{bmatrix}\right)(x)}_{L^{1}_{x}(\mathbb{R})}\Bigg],\\ \label{est03}
    \max_{\ell\in [m],j\in\{1,2\}}\norm{x\widehat{\phi_{\ell,j}}(x)}_{L^{1}_{x}(\mathbb{R})}\leq & C\max_{j\in\{0,1\}} \langle y_{1}-y_{m}\rangle ^{j}\max_{\ell}\norm{\chi_{\ell}(0,x)\langle x-y_{\ell}\rangle^{1-j}\mathcal{S}(\vec{\phi})(0,x)}_{L^{1}_{x}(\mathbb{R})},\\ \label{est04}
    \norm{\chi_{\ell}(0,x)\langle x-y_{\ell}\rangle\mathcal{S}(\vec{\phi})(0,x)}_{L^{1}_{x}(\mathbb{R})}\leq C & \langle y_{1}-y_{m}\rangle\max_{\ell\in [m],j\in\{1,2\}}\norm{\widehat{\phi_{\ell,j}}(x)}_{L^{1}_{x}(\mathbb{R})}\\
    & {+}C \langle y_{1}-y_{m}\rangle \max_{\ell\in\{1,m\}}\norm{\hat{G}_{\omega_{\ell}}\left(e^{iy_{\ell}k}\begin{bmatrix}
      \phi_{1,\ell}(k+\frac{v_{\ell}}{2}))\\
      \phi_{2,\ell}(k-\frac{v_{\ell}}{2})) 
   \end{bmatrix}\right)(x)}_{L^{1}_{x}(\mathbb{R})}\\
   &{+}C  \max_{\ell\in [m],j\in\{1,2\}}\norm{x\widehat{\phi_{\ell,j}}(x)}_{L^{1}_{x}(\mathbb{R})}\\
   &{+}C\max_{\ell\in\{1,m\}}\norm{\langle x\rangle \hat{G}_{\omega_{\ell}}\left(e^{iy_{\ell}k}\begin{bmatrix}
      \phi_{1,\ell}(k+\frac{v_{\ell}}{2}))\\
      \phi_{2,\ell}(k-\frac{v_{\ell}}{2})) 
   \end{bmatrix}\right)(x)}_{L^{1}_{x}(\mathbb{R})}.
\end{align}
\end{proposition}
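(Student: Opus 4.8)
The plan is to obtain Proposition~\ref{Coercivityproperty} by pairing two ingredients: the upper bounds, which are elementary Fourier analysis, and the coercive lower bounds, which are the substantive content and rest on the $m$-uniform invertibility of the Form~2 operators supplied by Lemma~\ref{lem:claimR}. For the upper inequalities in \eqref{est01}, \eqref{est02} and \eqref{est04} I would simply record the estimates displayed immediately before the proposition: expanding $\mathcal{S}(\vec\phi)(0,x)$ via the explicit formula \eqref{eq:Sphi} into its transmitted/reflected distorted-Fourier pieces plus the free part, and using the asymptotic behavior of $\hat G_{\omega_\ell}$ and $\hat F_{\omega_\ell}$, each term's $H^n_x$, $L^1_x$ and weighted-$L^1_x$ norm is controlled by $\langle k\rangle^n\vec\phi_\ell$, $\widehat{\phi_{\ell,j}}$, $x\widehat{\phi_{\ell,j}}$, together with the boundary contributions $\hat G_{\omega_\ell}(e^{iy_\ell k}\,\cdot\,)$ at $\ell\in\{1,m\}$.

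For the coercive (lower) bounds I would argue as follows. Apply the map $B=(B_1,\dots,B_{2m-2})$ to $\mathcal{S}(\vec\phi)(0,\cdot)$. By \eqref{systemB}--\eqref{b2m-2} and the change of unknowns \eqref{BSnotation}, the vector $\vec g=(g_1,\dots,g_{2m-2})$, whose components are the $P_\mp$-projections of $e^{\frac{i}{2}k(y_\ell+y_{\ell+1})}\vec\phi_\ell$, satisfies, up to $\cong_2$ errors, a linear system of the shape $T_h(\vec g)\cong_2 \widetilde B$, where $\widetilde B$ is a re-indexing and modulation of the $B_{\ell,j}$ and $T_h$ is an operator of \eqref{form2}. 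Lemma~\ref{lem:claimR}, in the form \eqref{Id-Bestimate}, shows $\mathrm{Id}-T_h$ has summable iterates in $L^2$ and $\mathcal{F}L^1$ with constants depending only on $m$ and $\min_\ell(v_\ell-v_{\ell+1})$, hence $T_h$ is a homeomorphism on each space; this gives \eqref{fl1coercive} and \eqref{l2coercccc}. Choosing $\min_\ell(y_\ell-y_{\ell+1})>L$ with $L$ large enough, the $\cong_2$ errors (which carry a factor $e^{-\beta\min_\ell(y_\ell-y_{\ell+1})}$) are absorbed into the left side, and feeding in \eqref{estSS1}--\eqref{estSS2} replaces $\norm{\widetilde B}$ by $\norm{\mathcal{S}(\vec\phi)(0,\cdot)}_{L^2}$ or $\norm{\mathcal{S}(\vec\phi)(0,\cdot)}_{L^1}$. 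Finally, since $g_{2\ell-1}=P_-\!\big(e^{\frac{i}{2}k(y_\ell+y_{\ell+1})}\vec\phi_\ell\big)$, $g_{2\ell}=P_+\!\big(e^{\frac{i}{2}k(y_\ell+y_{\ell+1})}\vec\phi_\ell\big)$ and $P_++P_-=\mathrm{Id}$, one has $e^{\frac{i}{2}k(y_\ell+y_{\ell+1})}\vec\phi_\ell=g_{2\ell-1}+g_{2\ell}$, so $\norm{\vec\phi_\ell}\lesssim\norm{g_{2\ell-1}}+\norm{g_{2\ell}}$ in either norm; this yields \eqref{est01} and the left inequality of \eqref{est02}.

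For the weighted estimates \eqref{est03}--\eqref{est04} I would differentiate the system \eqref{systemBB} in $k$: the derivative lands either on the coefficients $r_\ell,s_\ell$ (harmless by \eqref{decayrs1}), on the oscillatory phases $e^{\pm ik(y_n-y_{n+1})}$ (producing an extra factor of order $\max_j(y_j-y_{j+1})$), or on the $g_\ell$, and rearranging together with \eqref{dfl1coercive}, \eqref{estSS2}, \eqref{estSS3} bounds $\partial_k g_\ell$ in $\mathcal{F}L^1$ by $\max_\ell\norm{\chi_\ell(0,x)\langle x-y_\ell\rangle\mathcal{S}(\vec\phi)(0,\cdot)}_{L^1_x}$ plus the $\langle y_1-y_m\rangle\norm{\mathcal{S}(\vec\phi)(0,\cdot)}_{L^1_x}$ term accounting for the weight; then $x\widehat{\phi_{\ell,j}}\leftrightarrow\partial_k\phi_{\ell,j}$ is recovered via $P_\pm$ as above. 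The improvement \eqref{dss0} and the reverse inequality in \eqref{est04} under $(\mathrm{H}4)$ come from invoking Lemma~B.4 of \cite{dispa}, which upgrades $\norm{B_{\ell,j}}_{\mathcal{F}L^1}\lesssim\norm{\mathcal{S}(\vec\phi)(0,\cdot)}_{L^1_x}$ and its weighted counterpart; this is where threshold non-resonance enters, to control $G^*_{\omega_\ell},F^*_{\omega_\ell}$ near $k=0$.

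The main obstacle is the quantitative bookkeeping that makes the $\cong_2$ replacements legitimate uniformly in $m$. The Neumann bound from Lemma~\ref{lem:claimR} is a genuine constant $C(m)$ that grows with $m$ and depends on the speed gap, so one must take $L=L(m,\min_\ell(v_\ell-v_{\ell+1}))$ large enough that $e^{-\beta L}C(m)\ll 1$, and then check that every error along the way — including, in the differentiated system, errors of the shape $e^{-\beta\min_\ell(y_\ell-y_{\ell+1})}$ times a polynomial in $\langle y_1-y_m\rangle$ — remains small after this choice, and that none of the implicit constants ever depends on the individual $y_\ell$ beyond the stated weights. This tracking is the delicate part; everything else parallels the corresponding argument in \cite{dispa}, the only genuinely new input being the $m$-uniform invertibility from Lemma~\ref{lem:claimR} that replaces the large-speed-separation mechanism used there.
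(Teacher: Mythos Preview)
Your proposal is correct and follows essentially the same route as the paper: the upper bounds are read off directly from the explicit formula for $\mathcal{S}$ and the asymptotics of $\hat G_{\omega_\ell}$, while the coercive lower bounds come from applying $B$, rewriting via \eqref{BSnotation}--\eqref{systemBB} as a Form~2 system $T_h(\vec g)\cong_2\widetilde B$, inverting $T_h$ through the Neumann-type bound of Lemma~\ref{lem:claimR}, and then differentiating in $k$ for the weighted $\mathcal{F}L^1$ estimates, with Lemma~B.4 of \cite{dispa} supplying the $(\mathrm{H}4)$ upgrade. Your identification of Lemma~\ref{lem:claimR} as the sole genuinely new input, replacing the large-speed-separation mechanism, is exactly the point the paper makes.
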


\subsection{Dispersive estimates for dispersive maps}
In this subsection, we study the dispersive properties of the function given by dispersive map $\mathcal{S}(\vec{\phi})(t,x)$.

\begin{theorem}\label{decaySphi}
If $\min_{\ell}y_{\ell}-y_{\ell+1}>L,$ then $\vec{u}(t,x)=\mathcal{S}(\vec{\phi})(t,x)$ satisfies for constants $C >c>0$ depending only on $\min_{\ell}v_{\ell}-v_{\ell+1}>0$ the following estimates for all $t\geq s\geq 0.$
\begin{align}\label{CCC1Sphi}
  c \norm{\vec{u}(0,x)}_{H^{j}_{x}(\mathbb{R})} \leq & \norm{\vec{u}(t,x)}_{H^{j}_{x}(\mathbb{R})}\leq C \norm{\vec{u}(0,x)}_{H^{j}_{x}(\mathbb{R})} \text{, for all $j\in\{0,1,2\},$}\\
\norm{\overrightarrow{u}(t,x)}_{L^{\infty}_{x}(\mathbb{R})}\leq & \max_{\ell} \frac{C}{(t-s)^{\frac{1}{2}}}\norm{(1+\vert x-y_{\ell}-v_{\ell}s\vert )\chi_{\ell}(s,x)\overrightarrow{u}(s,x)}_{L^{2}_{x}(\mathbb{R})}.
 \end{align}
Moreover, if  $(\mathrm{H}4)$ holds, then, for any $p\in (1,2)$ close enough to $1,$ $\vec{u}(t,x)$ satisfies for a $K>1$ depending uniquely on $\min_{\ell}v_{\ell}-v_{\ell+1}>0$ and $p^{*}=\frac{p}{p-1}$ the following decay estimates for all $t>s\geq 0.$
  \begin{align}\label{Sphilinfty}
  \norm{\vec{u}(t,x)}_{L^{\infty}_{x}(\mathbb{R})}\leq &\frac{K}{(t-s)^{\frac{1}{2}}}\Bigg[\norm{\vec{u}(s,x)}_{L^{1}_{x}(\mathbb{R})}\\&{+}e^{{-}\beta\min_{\ell}(y_{\ell}-y_{\ell+1}+(v_{\ell}-v_{\ell+1})s)}\norm{\vec{u}(s,x)}_{L^{2}_{x}(\mathbb{R})}\Bigg],\\ \label{Sphiweightedlinfty}
\norm{\frac{\vec{u}(t,x)}{(1+\vert x-y_{\ell}-v_{\ell}t\vert)}}_{L^{\infty}_{x}(\mathbb{R})}\leq & \frac{K (s+y_{1}-y_{m})}{(t-s)^{\frac{3}{2}}} \norm{\overrightarrow{u}(s,x)}_{L^{1}_{x}(\mathbb{R})}\\ \nonumber 
&{+}\frac{K}{(t-s)^{\frac{3}{2}}}\max_{\ell}\norm{(1+\vert x-y_{\ell}-v_{\ell}s\vert )\chi_{\ell}(s,x)\overrightarrow{u}(s,x)}_{L^{1}_{x}(\mathbb{R})}\\ 
 \label{Sphiweightedderivative}
\max_{\ell}\norm{\frac{\partial_{x}\vec{u}(t)}{\langle x-v_{\ell}t-y_{\ell} \rangle^{1+\frac{p^{*}-2}{2p^{*}}+\alpha}}}_{L^{2}_{x}(\mathbb{R})}
\leq & \frac{K\max_{\ell}\norm{(1+\vert x-y_{\ell}-v_{\ell}s\vert)\chi_{\ell}(s,x)\langle \partial_{x}\rangle\vec{u}(s)}_{L^{1}_{x}(\mathbb{R})}^{\frac{2-p}{p}}\norm{\vec{u}(s)}_{H^{1}_{x}(\mathbb{R})}^{\frac{2(p-1)}{p}}}{(t-s)^{\frac{3}{2}(\frac{1}{p}-\frac{1}{p^{*}})}} \\
 &{+}K\frac{(s+y_{1}-y_{m})}{(t-s)^{\frac{3}{2}(\frac{1}{p}-\frac{1}{p^{*}})}}\norm{\vec{u}(s)}_{W^{1,1}_{x}(\mathbb{R})}^{\frac{2-p}{p}}\norm{\vec{u}(s)}_{H^{1}_{x}(\mathbb{R})}^{\frac{2(p-1)}{p}}
 \\
 &{+}K\frac{e^{{-}\beta\min_{\ell}((v_{\ell}-v_{\ell+1})s+y_{\ell}-y_{\ell+1})}}{(t-s)^{\frac{3}{2}(\frac{1}{p}-\frac{1}{p^{*}})}}\norm{\vec{u}(s)}_{H^{1}_{x}(\mathbb{R})}.
\end{align}
\end{theorem}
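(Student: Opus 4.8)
The plan is to peel off the explicit ``building blocks'' of $\mathcal S(\vec\phi)$, apply to each of them the sharp (free, respectively distorted) one–dimensional Schr\"odinger dispersive estimates, and then convert the resulting \emph{profile} norms back into the geometrically localized norms of $\mathcal S(\vec\phi)(s,\cdot)$ by means of the coercivity Proposition~\ref{Coercivityproperty}; the interaction between distinct centers, which in the absence of a large speed gap no longer decouples, is tamed by the factorial decay of $(\mathrm{Id}-T)^{j(m-1)}$ from Lemma~\ref{lem:claimR}. Concretely, from Definition~\ref{s0def} the $\ell$-th summand of $\mathcal S(\vec\phi)(t,x)$ equals $e^{i\Theta_\ell(t,x)\sigma_3}\,\hat G_{\omega_\ell}\!\big(e^{-it(k^2+\omega_\ell)\sigma_3}g_\ell\big)(x-y_\ell-v_\ell t)$, with $\Theta_\ell=\tfrac{v_\ell x}{2}-\tfrac{v_\ell^2t}{4}+\omega_\ell t+\gamma_\ell$ the boost phase and $g_\ell$ a $t$-independent modulation of $\vec\phi_\ell$, plus the free term $-\tfrac1{\sqrt{2\pi}}\int e^{-itk^2\sigma_3}\vec\varphi\,e^{ikx}\,dk$. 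Since $\hat G_{\omega_\ell}$ intertwines multiplication by $e^{-it(k^2+\omega_\ell)\sigma_3}$ with the distorted Schr\"odinger flow of $\mathcal H_{\omega_\ell}$ restricted to $\Ra P_{e,\ell}$, each block at time $t$ is that flow run from time $s$ to time $t$ and conjugated by the Galilei boost $e^{i\Theta_\ell\sigma_3}$, applied to the corresponding block at time $s$, while the free term is $e^{i(t-s)\partial_x^2\sigma_3}$ applied to its value at time $s$. I would then run the $B_\ell$-calculus of \cite{dispa} (the maps recalled before Proposition~\ref{Coercivityproperty} and the $\cong_2$-identities \eqref{systemB}--\eqref{b2m-2}) \emph{at time $s$}: the translated centers $y_\ell+v_\ell s$ have separation no smaller than at time $0$, so Lemma~\ref{lem:claimR} applies to the associated operators $T_h=\mathrm{Id}-R_h$ and the Neumann series $T_h^{-1}=\sum_j(\mathrm{Id}-T_h)^j$ converges with factorially small tails, yielding the coercivity bounds of Proposition~\ref{Coercivityproperty} at time $s$.

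Next I would insert the single–block dispersive estimates. For the free term these are the classical $\|e^{i\tau\partial_x^2}f\|_{L^\infty}\lesssim\tau^{-1/2}\|f\|_{L^1}$, its weighted companion $\|e^{i\tau\partial_x^2}f\|_{L^\infty}\lesssim\tau^{-1/2}\|\langle x\rangle f\|_{L^2}$, and, when $\widehat f(0)=0$, the gain $\|\langle x\rangle^{-1}e^{i\tau\partial_x^2}f\|_{L^\infty}\lesssim\tau^{-3/2}\|\langle x\rangle f\|_{L^1}$, all obtained from the explicit Schr\"odinger kernel together with the Galilean vector field $x+2i\tau\partial_x$ (conserved in $L^2$). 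For the $\ell$-th distorted block I would either invoke the known $L^1\to L^\infty$, weighted $L^2\to L^\infty$ and weighted $\tau^{-3/2}$ bounds for matrix Schr\"odinger operators under $(\mathrm{H}1)$--$(\mathrm{H}3)$ — $(\mathrm{H}4)$, i.e.\ the absence of threshold resonances at $\pm\omega_\ell$, removing the extra singularity of the distorted transform at the thresholds and thereby producing the unweighted $\tau^{-1/2}$ and the sharp weighted $\tau^{-3/2}$ rates — or else split $\hat G_{\omega_\ell}$ via the Jost asymptotics \eqref{asy1}--\eqref{asy4} into a free Fourier synthesis part (handled as above), a reflected part carrying a factor $r_\ell$, and an $x$-exponentially localized remainder, the last two of which again reduce to free Schr\"odinger evolutions of profiles controlled in $\mathcal F L^1$ by Lemma~\ref{lem:reftranscoeff}. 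Feeding the coercivity bounds \eqref{est01}--\eqref{est04} of Proposition~\ref{Coercivityproperty} at time $s$ (the factor $\langle y_1-y_m\rangle$ there becoming $s+y_1-y_m$ after translating centers, exactly the loss in \eqref{Sphiweightedlinfty}) rewrites the time-$s$ block data as the $\chi_\ell$-localized and $\langle x-y_\ell-v_\ell s\rangle$-weighted norms of $\mathcal S(\vec\phi)(s,\cdot)$ on the right-hand sides, plus contributions exponentially small in $\min_\ell\big(y_\ell-y_{\ell+1}+(v_\ell-v_{\ell+1})s\big)$; this proves \eqref{Sphilinfty}, \eqref{Sphiweightedlinfty}, and the $L^\infty$ line of \eqref{CCC1Sphi}. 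The two–sided $H^j$ bounds in \eqref{CCC1Sphi} follow by applying Proposition~\ref{Coercivityproperty} at times $0$ and $t$ simultaneously, noting that $e^{-it(k^2+\omega_\ell)\sigma_3}$ is a unitary frequency multiplier, so the $\langle k\rangle^j$-weighted $L^2$ norms of the effective profiles are $t$-independent.

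For the weighted derivative estimate \eqref{Sphiweightedderivative} I would interpolate between the two extreme cases. Differentiating the block representation (the boost contributes only the bounded factor $\tfrac{iv_\ell}{2}\sigma_3$), the endpoint $p=1$ yields, via the weighted $\tau^{-3/2}$ block estimate, a bound with time decay $(t-s)^{-3/2}$ whose data appear in $\chi_\ell$-localized $W^{1,1}$ norms of $\vec u(s)$, while the endpoint $p=2$ is the $H^1$-boundedness of $\mathcal S(\vec\phi)(t)$ from \eqref{CCC1Sphi}. Complex interpolation in the Lebesgue exponent, a H\"older step pulling out the weight $\langle x-v_\ell t-y_\ell\rangle^{-(1+\frac{p^*-2}{2p^*}+\alpha)}$ — whose power is the minimal one placing it in $L^{p^*}$ — and a Gagliardo--Nirenberg trade between the $L^1$- and $L^2$-type data then produce the exponents $\tfrac{2-p}{p}$, $\tfrac{2(p-1)}{p}$ and the decay rate $\tfrac32\big(\tfrac1p-\tfrac1{p^*}\big)$.

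I expect the main obstacle to be the first two steps together: without the large speed gap the reflected waves emitted near one center genuinely re-enter the cells of the other centers, so the blocks cannot be treated independently and the only available control is the factorial decay of $(\mathrm{Id}-T_h)^{j(m-1)}$. The delicate point is to propagate this through the \emph{weighted} $\mathcal F L^1$ norms required by \eqref{Sphiweightedlinfty}--\eqref{Sphiweightedderivative} — that is, to upgrade \eqref{fl1coercive}--\eqref{dfl1coercive} to estimates carrying an $\langle x\rangle$-weight — while keeping the accompanying growth factor $\max_j(y_j-y_{j+1})$ (and, after translation, $s+y_1-y_m$) merely \emph{linear}, so that it is absorbed by the exponential gains $e^{-\beta\min_\ell(y_\ell-y_{\ell+1}+(v_\ell-v_{\ell+1})s)}$ and does not spoil the stated rates.
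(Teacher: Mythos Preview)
Your proposal is correct and follows essentially the same route as the paper: decompose $\mathcal S(\vec\phi)$ into its Galilei-boosted distorted-Fourier blocks plus the free piece, apply the single-potential dispersive bounds of Krieger--Schlag (their Propositions~7.1 and~8.1) to each block, and then convert the resulting profile norms into the localized norms of $\mathcal S(\vec\phi)(s,\cdot)$ via the coercivity estimates \eqref{dss0}, \eqref{dS(0)} and Proposition~\ref{Coercivityproperty}, which in turn rest on Lemma~\ref{lem:claimR}. The paper handles \eqref{CCC1Sphi} for $j=2$ by the analogue of Corollary~4.10 of \cite{dispa} (with the new ingredient \eqref{Id-Bestimate}) and gets $j=1$ by interpolation, and for \eqref{Sphiweightedderivative} it defers to the argument of Appendix~C of \cite{dispa} with Lemma~\ref{lem:claimR} and Propositions~\ref{princ},~\ref{Coercivityproperty} substituted for their large-speed counterparts --- in substance the same interpolation/weighted scheme you describe.
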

\begin{proof}
    The proof of \eqref{CCC1Sphi} when $j=2$ is similar to the proof of Corollary $4.10$ from \cite{dispa}.   The unique difference is that we apply the estimates \eqref{Id-Bestimate} to prove the extension of Corollary $4.10$ from \cite{dispa} to all the cases satisfying  $\min_{\ell}v_{\ell}-v_{\ell+1}>0.$ Finally, similarly to the proof of Lemma $6.1$ in \cite{dispa}, we can prove estimate \eqref{CCC1Sphi} for $j=1$ using the estimates \eqref{CCC1Sphi} when $j=0$ and $j=2,$ and interpolation. 

\par Next, Proposition $7.1$ of \cite{KriegerSchlag} and estimate \eqref{dss0} imply that
\begin{multline*}
    \max_{\ell\in [m]}\norm{\hat{G}_{\omega_{\ell}}\left(e^{{-}it(k^{2}+\omega_{\ell})\sigma_{3}}e^{iy_{\ell}k}
    \begin{bmatrix}
        \phi_{1,\ell}\left(k+\frac{v_{\ell}}{2}\right)\\
        \phi_{2,\ell}\left(k-\frac{v_{\ell}}{2}\right)
    \end{bmatrix}
    \right)(x)}_{L^{\infty}_{x}(\mathbb{R})}\\
    \begin{aligned}
    \lesssim &\frac{1}{(t-s)^{\frac{1}{2}}}\left[\max_{\ell}\norm{\vec{\phi}_{\ell}}_{\mathcal{F}L^{1}}+\norm{\mathcal{S}(s)(\vec{\phi})}_{L^{1}_{x}(\mathbb{R})}\right]\\
    \lesssim &\frac{1}{(t-s)^{\frac{1}{2}}}\norm{\mathcal{S}(s)(\vec{\phi})}_{L^{1}_{x}(\mathbb{R})}\\
   \end{aligned}
\end{multline*}

Next, using Proposition $8.1$ from \cite{KriegerSchlag} we can verify that
\begin{multline}
\max_{\ell\in\{2,\,...,\,m-1\}}\norm{\frac{\hat{G}_{\omega_{\ell}}\left( e^{{-}it(k^{2}+\omega_{\ell})\sigma_{3}}e^{iy_{\ell}k}\begin{bmatrix}
    \phi_{1,\ell}\left(k+\frac{v_{\ell}}{2}\right)\\
    \phi_{2,\ell}\left(k-\frac{v_{\ell}}{2}\right)
\end{bmatrix}\right)(x)}{\langle x\rangle}}_{L^{\infty}_{x}(\mathbb{R})}\\
\begin{aligned}
\lesssim & \frac{1}{(t-s)^{\frac{3}{2}}}\norm{\langle x\rangle\hat{G}_{\omega_{\ell}}\left( e^{{-}is(k^{2}+\omega_{\ell})\sigma_{3}}e^{iy_{\ell}k}\begin{bmatrix}
    \phi_{1,\ell}\left(k+\frac{v_{\ell}}{2}\right)\\
    \phi_{2,\ell}\left(k-\frac{v_{\ell}}{2}\right)
\end{bmatrix}\right)(x)}_{L^{1}_{x}(\mathbb{R})}.
\end{aligned}
\end{multline}
Consequently, we can verify from estimates \eqref{dS(0)}, \eqref{dss0} and Theorem $1.8$ from \cite{dispa} that
\begin{multline}
\max_{\ell\in\{2,\,...,\,m-1\}}\norm{\frac{\hat{G}_{\omega_{\ell}}\left( e^{{-}it(k^{2}+\omega_{\ell})\sigma_{3}}e^{iy_{\ell}k}\begin{bmatrix}
    \phi_{1,\ell}\left(k+\frac{v_{\ell}}{2}\right)\\
    \phi_{2,\ell}\left(k-\frac{v_{\ell}}{2}\right)
\end{bmatrix}\right)(x)}{\langle x\rangle}}_{L^{\infty}_{x}(\mathbb{R})}\\
\begin{aligned}
\lesssim & \max_{\ell\in [m]}\frac{\norm{e^{{-}is(k^{2}+\omega_{\ell})\sigma_{3}}e^{iy_{\ell}k}\vec{\phi}_{\ell}\left(k+\sigma_{3}\frac{v_{\ell}}{2}\right)}_{\mathcal{F}L^{1}}}{(t-s)^{\frac{3}{2}}}\\&{+}\max_{\ell\in [m]}\frac{\norm{\frac{\partial}{\partial k}\left[e^{{-}is(k^{2}+\omega_{\ell})\sigma_{3}}e^{iy_{\ell}k}\vec{\phi}_{\ell}\left(k+\sigma_{3}\frac{v_{\ell}}{2}\right)\right]}_{\mathcal{F}L^{1}}}{(t-s)^{\frac{3}{2}}}\\
\lesssim & \frac{(s+y_{1}-y_{m})}{(t-s)^{\frac{3}{2}}} \norm{\mathcal{S}(\vec{\phi})(s,x)}_{L^{1}_{x}(\mathbb{R})}\\ \nonumber 
&{+}\frac{1}{(t-s)^{\frac{3}{2}}}\max_{\ell}\norm{(1+\vert x-y_{\ell}-v_{\ell}s\vert )\chi_{\ell}(s,x)\mathcal{S}(\vec{\phi})(s,x)}_{L^{1}_{x}(\mathbb{R})}. 
\end{aligned}
\end{multline}
Next, we can verify for any $\ell\in\{1,m\}$ that
\begin{multline*}
  \norm{\frac{\hat{G}_{\omega_{\ell}}\left( e^{{-}it(k^{2}+\omega_{\ell})\sigma_{3}}e^{iy_{\ell}k}\begin{bmatrix}
    \phi_{1,\ell}\left(k+\frac{v_{\ell}}{2}\right)\\
    \phi_{2,\ell}\left(k-\frac{v_{\ell}}{2}\right)
\end{bmatrix}\right)(x)}{\langle x\rangle}}_{L^{\infty}_{x}(\mathbb{R})}\\
\begin{aligned}
    \lesssim & \max_{j\in\{1,m\}}\frac{\norm{\langle x-y_{j}-v_{j}s \rangle\chi_{j}(s,x) \mathcal{S}(\vec{\phi})(s,x)}_{L^{1}_{x}(\mathbb{R})}}{(t-s)^{\frac{3}{2}}}\\
    &{+} \max_{\ell\in[m],j\in\{0,1\}}\frac{\norm{\frac{\partial^{j}}{\partial k^{j}}\left[e^{{-}is(k^{2}+\omega_{\ell})\sigma_{3}}e^{iy_{\ell}k}\vec{\phi}_{\ell}\left(k+\sigma_{3}\frac{v_{\ell}}{2}\right)\right]}_{\mathcal{F}L^{1}}}{(t-s)^{\frac{3}{2}}}.
\end{aligned}
\end{multline*}
Consequently, we can obtain from the estimates \eqref{dss0} and \eqref{dS(0)} and Theorem $1.8$ from \cite{dispa} that
\begin{multline*}
    \norm{\frac{\hat{G}_{\omega_{\ell}}\left( e^{{-}it(k^{2}+\omega_{\ell})\sigma_{3}}e^{iy_{\ell}k}\begin{bmatrix}
    \phi_{1,\ell}\left(k+\frac{v_{\ell}}{2}\right)\\
    \phi_{2,\ell}\left(k-\frac{v_{\ell}}{2}\right)
\end{bmatrix}\right)(x)}{\langle x\rangle}}_{L^{\infty}_{x}(\mathbb{R})}\\
\begin{aligned}
\lesssim & \frac{(s+y_{1}-y_{m})}{(t-s)^{\frac{3}{2}}} \norm{\mathcal{S}(\vec{\phi})(s,x)}_{L^{1}_{x}(\mathbb{R})}\\ \nonumber 
&{+}\frac{1}{(t-s)^{\frac{3}{2}}}\max_{\ell}\norm{(1+\vert x-y_{\ell}-v_{\ell}s\vert )\chi_{\ell}(s,x)\mathcal{S}(\vec{\phi})(s,x)}_{L^{1}_{x}(\mathbb{R})},
\end{aligned}
\end{multline*}
from which we conclude the proof of estimate \eqref{Sphiweightedlinfty} of Theorem 
\ref{decaySphi}. \par 
The proof of estimate \eqref{Sphiweightedderivative} is analogous to the proof of inequality $(\mathrm{C}.1.3)$ from Appendix $C$ of \cite{dispa}. The difference is that we use Lemma \ref{lem:claimR} and Propositions \ref{princ}, \ref{Coercivityproperty} in place of Lemmas $5.3,\,5.1$ of \cite{dispa} and Theorem $4.2$ of \cite{dispa}, respectively, in the argument of the proof of Theorem $1.21$ of \cite{dispa} to deal with the case where the separation in the speeds $\min_{\ell}v_{\ell}-v_{\ell+1}>0$ is  small. In particular, Lemma \ref{lem:claimR} and Propositions \ref{princ}, \ref{Coercivityproperty} are improved versions of Lemmas $5.3,\,5.1$ and Theorem $4.2$ of \cite{dispa}.
\end{proof}

\section{Proof of Proposition \ref{princ}}\label{sec:decomp}
The proof of Proposition \ref{princ} will be similar to the proof of Lemma $5.1$ from \cite{dispa}. The unique difference is that we will use Proposition \ref{lem:claimR} for operators $T$ of \eqref{form1} and Proposition \ref{Coercivityproperty} to deal with the cases where $\min_{\ell} v_{\ell}-v_{\ell+1}>0$ is small enough.
\par First, for each function $\vec{f}\in L^{2}_{x}(\mathbb
{R},\mathbb{C}^{2}),$ we consider the following system of equations on the variables $\vec{f}_{\ell,\pm}$ and $\vec{v}_{\ell}\in \Ra P_{d} $ of $ \mathcal{H}_{\ell}.$
\begin{align}\label{lG}
& e^{i\frac{\sigma_{3}v_{\ell}x}{2}}\hat{G}_{\omega_{\ell}}\left(e^{iy_{\ell}k}\begin{bmatrix}
     \phi_{1,\ell}\left(k+\frac{v_{\ell}}{2}\right)\\
     \phi_{2,\ell}\left(k-\frac{v_{\ell}}{2}\right)
 \end{bmatrix}\right)(x-y_{\ell})  \\
 &= 1_{\left\{\frac{(y_{\ell+1}+y_{\ell})}{2}<x\leq \frac{(y_{\ell-1}+y_{\ell})}{2}\right\}}(x)f(x)+ e^{i\frac{\sigma_{3}v_{\ell}x}{2}}\overrightarrow{v_{d_{\ell}}}(x-y_{\ell})\\ \nonumber
 & {+}1_{\left\{x\leq \frac{(y_{\ell+1}+y_{\ell})}{2}\right\}}(x)f_{\ell,-}(x)+1_{\left\{x> \frac{(y_{\ell}+y_{\ell-1})}{2}\right\}}(x)f_{\ell,+}(x),\\ \label{1G}
 & e^{i\frac{\sigma_{3}v_{1}x}{2}}\hat{G}_{\omega_{1}}\left(e^{iy_{1}k}\begin{bmatrix}
     \phi_{1,1}\left(k+\frac{v_{1}}{2}\right)\\
     \phi_{2,1}\left(k-\frac{v_{1}}{2}\right)
 \end{bmatrix}\right)(x-y_{1})\\    
 &=  1_{\left\{x>\frac{(y_{1}+y_{2})}{2}\right\}}(x)f(x)+1_{\left\{x\leq \frac{(y_{1}+y_{2})}{2}\right\}}(x)f_{1,-}(x)+e^{i\frac{\sigma_{3}v_{1}x}{2}}\overrightarrow{v_{d_{1}}}(x-y_{1}),\\ \label{mG}
& e^{i\frac{\sigma_{3}v_{m}x}{2}}\hat{G}_{\omega_{m}}\left(e^{iy_{m}k}\begin{bmatrix}
     \phi_{1,m}\left(k+\frac{v_{m}}{2}\right)\\
     \phi_{2,m}\left(k-\frac{v_{m}}{2}\right)
 \end{bmatrix}\right)(x-y_{m})\\
 &= 1_{\left\{x\leq  \frac{(y_{m}+y_{m-1})}{2}\right\}}(x)f(x)+1_{\left\{x>\frac{(y_{m}+y_{m-1})}{2}\right\}}(x)f_{m,+}(x){+}e^{i\frac{\sigma_{3}v_{m}x}{2}}\overrightarrow{v_{d_{m}}}(x-y_{m}),
 \end{align}
such that the functions $\phi_{\ell}(k)\coloneqq (\phi_{1,\ell}(k),\phi_{2,\ell}(k))\in L^{2}_{k}(\mathbb{R},\mathbb{C}^{2})$ are defined in the items $\mathrm{a)}$ and $\mathrm{b})$ of Definition \ref{s0def}. Similarly to the argument used in the proof of Lemma $5.1$ from \cite{dispa}, the existence of functions $\phi_{\ell}, f_{\ell,\pm} $ and $\vec{v}_{d_{\ell}}$ satisfying equations \eqref{1G} and \eqref{mG} for each given function $\vec{f}\in L^{2}_{x}(\mathbb{R},\mathbb{C}^{2})$ will imply that Proposition \ref{princ} is true.
\par Next, using Lemma \ref{leper}, we can verify applying $F^{*}_{\ell}$ and $G^{*}_{\ell}$ to each equation of \eqref{1G} and obtain the following identities
\begin{align}\label{l11}
  \begin{bmatrix}
     e^{iy_{\ell}k} \phi_{1,\ell}(k+\frac{v_{\ell}}{2})\\
      {-}e^{iy_{\ell}k}\phi_{2,\ell}(k-\frac{v_{\ell}}{2})
\end{bmatrix}=&F_{\omega_{\ell}}^{*}\left(\sigma_{3}e^{{-}i\sigma_{3}\frac{v_{\ell}(x+y_{\ell})}{2}}\chi_{\{\frac{y_{\ell+1}-y_{\ell}}{2}< x\leq \frac{y_{\ell-1}-y_{\ell}}{2}\}}(x)f(x+y_{\ell})\right)(k)\\ \nonumber
&{+}F_{\omega_{\ell}}^{*}\left(\sigma_{3}e^{{-}i\sigma_{3}\frac{v_{\ell}(x+y_{\ell})}{2}}\chi_{\{x\leq \frac{y_{\ell+1}-y_{\ell}}{2}\}}(x)f_{\ell,-}(x+y_{\ell})\right)(k)\\ \nonumber
&{+}F_{\omega_{\ell}}^{*}\left(\sigma_{3}e^{{-}i\sigma_{3}\frac{v_{\ell}(x+y_{\ell})}{2}}\chi_{\{x> \frac{y_{\ell-1}-y_{\ell}}{2}\}}(x)f_{\ell,+}(x+y_{\ell})\right)(k),
 \end{align}
and, using the item  $\mathrm{b)}$ of Definition \ref{s0def}, we can verify that
\begin{align}\label{l22}
  \begin{bmatrix}
     e^{iy_{\ell}k} \phi_{1,\ell-1}(k+\frac{v_{\ell}}{2})\\
      {-}e^{iy_{\ell}k}\phi_{2,\ell-1}(k-\frac{v_{\ell}}{2})
\end{bmatrix}=&G_{\omega_{\ell}}^{*}\left(\sigma_{3}e^{{-}i\sigma_{3}\frac{v_{\ell}(x+y_{\ell})}{2}}\chi_{\{\frac{y_{\ell+1}-y_{\ell}}{2}< x\leq \frac{y_{\ell-1}-y_{\ell}}{2}\}}(x)f(x+y_{\ell})\right)(k)\\ \nonumber
&{+}G_{\omega_{\ell}}^{*}\left(\sigma_{3}e^{{-}i\sigma_{3}\frac{v_{\ell}(x+y_{\ell})}{2}}\chi_{\{x\leq \frac{y_{\ell+1}-y_{\ell}}{2}\}}(x)f_{\ell,-}(x+y_{\ell})\right)(k)\\ \nonumber
&{+}G_{\omega_{\ell}}^{*}\left(\sigma_{3}e^{{-}i\sigma_{3}\frac{v_{\ell}(x+y_{\ell})}{2}}\chi_{\{x> \frac{y_{\ell-1}-y_{\ell}}{2}\}}(x)f_{\ell,+}(x+y_{\ell})\right)(k).
 \end{align}
Furthermore, we can verify that
\begin{align}\label{l33}
\begin{bmatrix}
e^{iy_{m}k}\phi_{1,m-1}\left(k+\frac{v_{m}}{2}\right)\\
{-}e^{iy_{m}k}\phi_{2,m-1}\left(k-\frac{v_{m}}{2}\right)
\end{bmatrix}=&G_{\omega_{m}}^{*}\left(\sigma_{3}e^{{-}i\sigma_{3}\frac{v_{m}(x+y_{m})}{2}}\chi_{\{ x\leq  \frac{y_{m-1}-y_{m}}{2}\}}(x)f(x+y_{m})\right)(k)\\ \nonumber
&{+}G_{\omega_{m}}^{*}\left(\sigma_{3}e^{{-}i\sigma_{3}\frac{v_{m}(x+y_{m})}{2}}\chi_{\{x> \frac{y_{m-1}-y_{m}}{2}\}}(x)f_{m,+}(x+y_{m})\right)(k),   
\end{align}
and
\begin{align}\label{l44}
\begin{bmatrix}
e^{iy_{1}k}\phi_{1,1}\left(k+\frac{v_{1}}{2}\right)\\
{-}e^{iy_{1}k}\phi_{2,1}\left(k-\frac{v_{1}}{2}\right)
\end{bmatrix}=&F_{\omega_{1}}^{*}\left(\sigma_{3}e^{{-}i\sigma_{3}\frac{v_{1}(x+y_{1})}{2}}\chi_{\{ x>  \frac{y_{2}-y_{1}}{2}\}}(x)f(x+y_{1})\right)(k)\\ \nonumber
&{+}F_{\omega_{1}}^{*}\left(\sigma_{3}e^{{-}i\sigma_{3}\frac{v_{1}(x+y_{1})}{2}}\chi_{\{x\leq  \frac{y_{2}-y_{1}}{2}\}}(x)f_{1,-}(x+y_{1})\right)(k).   \end{align}
\par Next, using the asymptotic behavior of $\mathcal{F}$ and $\mathcal{G}$ from \eqref{asy1}, \eqref{asy2}, \eqref{asy3} and \eqref{asy4},
we can verify the following estimates below for each $\ell \in \{2,\,...,m-1\}.$
\begin{multline}\label{estFl}
\begin{aligned}
    \begin{bmatrix}
     e^{iy_{\ell}k} \phi_{1,\ell}(k+\frac{v_{\ell}}{2})\\
      {-}e^{iy_{\ell}k}\phi_{2,\ell}(k-\frac{v_{\ell}}{2})
  \end{bmatrix}=& F_{1,\ell}(f)(k){+}\overline{s_{\omega_{\ell}}(k)}\sigma_{3}\left[\int_{\frac{y_{\ell-1}-y_{\ell}}{2}}^{{+}\infty}e^{{-}ikx}e^{{-}i\sigma_{3}\frac{v_{\ell}(x+y_{\ell})}{2}}f_{\ell,+}(x+y_{\ell})\,dx\right]
\\ &{+}\sigma_{3}\int_{{-}\infty}^{\frac{y_{\ell+1}-y_{\ell}}{2}}\left(e^{{-}ikx}+r_{\omega_{\ell}}({-}k)e^{ikx}\right)e^{{-}i\sigma_{3}\frac{v_{\ell}(x+y_{\ell})}{2}}f_{\ell,-}(x+y_{\ell})\,dx\\&{+}O\left(\frac{1}{(1+\vert k\vert )}\norm{f_{\ell,-}(x)1_{\{x\leq\frac{y_{\ell}+y_{\ell+1}}{2}\}}}_{L^{2}_{x}(\mathbb{R})}e^{\beta\frac{(y_{\ell+1}-y_{\ell})}{2}}\right)\\&{+}O\left(\frac{1}{(1+\vert k\vert )}\norm{f_{\ell,+}(x)1_{\{x>\frac{y_{\ell}+y_{\ell-1}}{2}\}}}_{L^{2}_{x}(\mathbb{R})}e^{{-}\beta\frac{(y_{\ell-1}-y_{\ell})}{2}}\right),
\end{aligned}
\end{multline}
\begin{multline}\label{estGl}
\begin{aligned}
 \begin{bmatrix}
e^{iy_{\ell}k}\phi_{1,\ell-1}\left(k+\frac{v_{\ell}}{2}\right)\\
    {-} e^{iy_{\ell}k}\phi_{1,\ell-1}\left(k-\frac{v_{\ell}}{2}\right)
 \end{bmatrix}=& G_{1,\ell}(f)(k){+}s_{\omega_{\ell}}(k)\sigma_{3}\int_{{-}\infty}^{\frac{y_{\ell+1}-y_{\ell}}{2}}e^{{-}ikx}e^{{-}i\sigma_{3}\frac{v_{\ell}(x+y_{\ell})}{2}}f_{\ell,-}(x+y_{\ell})\,dx\\
 &{+}\sigma_{3}\int_{\frac{y_{\ell-1}-y_{\ell}}{2}}^{{+}\infty}e^{{-}i\sigma_{3}\frac{v_{\ell}(x+y_{\ell})}{2}}\left[e^{{-}ikx}+r_{\omega_{\ell}}(k)e^{ikx}\right]f_{\ell,+}(x+y_{\ell})\,dx\\
 &{+}O\left(\frac{1}{(1+\vert k\vert )}\norm{f_{\ell,-}(x)1_{\{x\leq \frac{y_{\ell}+y_{\ell+1}}{2}\}}}_{L^{2}_{x}(\mathbb{R})}e^{\beta\frac{(y_{\ell+1}-y_{\ell})}{2}}\right)\\
 &{+}O\left(\frac{1}{(1+\vert k\vert )}\norm{f_{\ell,+}(x)1_{\{x>\frac{y_{\ell}+y_{\ell-1}}{2}\}}}_{L^{2}_{x}(\mathbb{R})}e^{{-}\beta\frac{(y_{\ell-1}-y_{\ell})}{2}}\right),
\end{aligned}
\end{multline}
where
\begin{align*}
   F_{1,\ell}(f)(k)=\begin{bmatrix}
       f_{1,\ell}(k)\\
       f_{2,\ell}(k)
   \end{bmatrix}\coloneqq &F_{\omega_{\ell}}^{*}\left(\sigma_{3}e^{{-}i\sigma_{3}\frac{v_{\ell}(x+y_{\ell})}{2}}1_{\{\frac{y_{\ell+1}-y_{\ell}}{2}< x\leq \frac{y_{\ell-1}-y_{\ell}}{2}\}}(x)f(x+y_{\ell})\right)(k),\\
    G_{1,\ell}(f)(k)=\begin{bmatrix}
       g_{1,\ell}(k)\\
       g_{2,\ell}(k)
   \end{bmatrix} \coloneqq &F_{\omega_{\ell}}^{*}\left(\sigma_{3}e^{{-}i\sigma_{3}\frac{v_{\ell}(x+y_{\ell})}{2}}1_{\{\frac{y_{\ell+1}-y_{\ell}}{2}< x\leq \frac{y_{\ell-1}-y_{\ell}}{2}\}}(x)f(x+y_{\ell})\right)(k).
\end{align*}
Similarly, we can verify when $\ell=1$ and $\ell=2,$ the following estimates below.
\begin{multline}\label{estGm}
\begin{aligned}
\begin{bmatrix}
    e^{iy_{m}k}\phi_{1,m-1}\left(k+\frac{v_{m}}{2}\right)\\
{-}e^{iy_{m}k}\phi_{2,m-1}\left(k-\frac{v_{m}}{2}\right)
\end{bmatrix}=&G_{1,m}(f)(k)\\ 
&{+}r_{\omega_{m}}(k)\sigma_{3}\left[\int_{\frac{y_{m-1}-y_{m}}{2}}^{{+}\infty}e^{{-}i\sigma_{3}\frac{v_{m}(x+y_{m})}{2}}f_{m,{+}}(x+y_{m})e^{ikx}\,dx\right]\\&{+}\sigma_{3}\int_{\frac{y_{m-1}-y_{m}}{2}}^{{+}\infty}e^{{-}i\sigma_{3}\frac{v_{m}(x+y_{m})}{2}}f_{m,{+}}(x+y_{m})e^{{-}ikx}\,dx\\&{+}O\left(\frac{1}{(1+\vert k\vert)}\norm{f_{m,+}(x)1_{\{x> \frac{y_{m-1}+y_{m}}{2}\}}}_{L^{2}_{x}(\mathbb{R})}e^{{-}\beta \frac{y_{m-1}-y_{m}}{2} }\right),
\end{aligned}
\end{multline}
and 
\begin{align}\label{F1est}
\begin{bmatrix}  
    e^{iy_{1}k}\phi_{1,1}\left(k+\frac{v_{1}}{2}\right)\\
{-}e^{iy_{1}k}\phi_{2,1}\left(k-\frac{v_{1}}{2}\right)
\end{bmatrix}=&F_{1,1}(f)(k){+}r_{\omega_{1}}({-}k)\sigma_{3}\int_{{-}\infty}^{\frac{y_{2}-y_{1}}{2}}e^{ikx}e^{{-}i\sigma_{3}\frac{v_{1}(x+y_{1})}{2}}f_{1,-}(x+y_{1})\,dx
\\ \nonumber &{+} \sigma_{3}\int_{{-}\infty}^{\frac{y_{2}-y_{1}}{2}} e^{{-}ikx}e^{{-}i\sigma_{3}\frac{v_{1}(x+y_{1})}{2}}f_{1,-}(x+y_{1})\,dx\\
&{+}O\left(\frac{1}{(1+\vert k\vert)}\norm{f_{m,+}(x)1_{\{x> \frac{y_{m-1}+y_{m}}{2}\}}}_{L^{2}_{x}(\mathbb{R})}e^{{-}\beta \frac{y_{m-1}-y_{m}}{2} }\right),
\end{align}
such that
\begin{align*}
   F_{1,1}(f)(k)=&F_{\omega_{1}}^{*}\left(\sigma_{3}e^{{-}i\sigma_{3}\frac{v_{1}(x+y_{1})}{2}}1_{\{\frac{y_{2}-y_{1}}{2}< x \}}(x)f(x+y_{1})\right)(k),\\
    G_{1,m}(f)(k)=&F_{\omega_{m}}^{*}\left(\sigma_{3}e^{{-}i\sigma_{3}\frac{v_{m}(x+y_{m})}{2}}1_{\{ x\leq \frac{y_{m-1}-y_{m}}{2}\}}(x)f(x+y_{m})\right)(k).
\end{align*}
From now on, we consider the following functions 
\begin{align}\label{unknown}
\begin{bmatrix}
    g_{1,-,1}(k)\\
    g_{1,-,2}(k)
\end{bmatrix}=&\int_{{-}\infty}^{0}f_{1,-}\left(x+\frac{y_{1}+y_{2}}{2}\right)e^{ikx}\,dx,\\ \nonumber  \begin{bmatrix}
    g_{\ell,-,1}(k)\\
    g_{\ell,-,2}(k)
\end{bmatrix}=&\int_{{-}\infty}^{0}f_{\ell,-}\left(x+\frac{y_{\ell}+y_{\ell+1}}{2}\right)e^{ikx}\,dx \text{, when $2\leq \ell\leq m-1,$}\\ \nonumber
\begin{bmatrix}
    g_{\ell,+,1}(k)\\
    g_{\ell,+,2}(k)
\end{bmatrix}=&\int_{0}^{{+}\infty}f_{\ell,+}\left(x+\frac{y_{\ell}+y_{\ell-1}}{2}\right)e^{ikx}\,dx \text{, when $2\leq \ell\leq m-1,$}\\ \nonumber \begin{bmatrix}
    g_{m,+,1}(k)\\
    g_{m,+,2}(k)
\end{bmatrix}=&\int_{0}^{{+}\infty}f_{m,+}\left(x+\frac{y_{m}+y_{m-1}}{2}\right)e^{ikx}\,dx.
\end{align}
Moreover, we consider from now on functions of the form.
\begin{align*}
    A_{1,-}(f)(k)=& P_{+}\left(
e^{i\frac{(y_{1}-y_{2})k}{2}}e^{i\theta_{1,-,1}}g_{1,2}(k)-e^{\frac{i(y_{2}-y_{1})k}{2}}e^{i\theta_{1,-,2}}f_{1,1}\left(k+\frac{v_{2}}{2}-\frac{v_{1}}{2}\right)\right),\\
    A_{m,+}(f)(k)= &P_{+}\left(e^{i\theta_{1,+,m}}e^{i\frac{(y_{m}-y_{m-1})k}{2}}f_{1,m-1}\left(k+\frac{v_{m}}{2}-\frac{v_{m-1}}{2}\right)-e^{i\theta_{2,+,m}}e^{{-}i\frac{(y_{m}-y_{m-1})k}{2}}g_{1,m}\left(k-\frac{v_{m}}{2}\right)\right),\\
    A_{\ell,-}(f)(k)=& P_{-}\left(e^{i\frac{k(y_{\ell}-y_{\ell+1})}{2}}e^{i\theta_{1,-,\ell}}g_{1,\ell+1}(k)-e^{i\frac{k(y_{\ell+1}-y_{\ell})}{2}}e^{i\theta_{2,-,\ell}}f_{1,\ell}\left(k+\frac{v_{\ell+1}}{2}-\frac{v_{\ell}}{2}\right)\right),\\
    A_{\ell,+}(f)(k)= & P_{+}\left(e^{i\frac{k(y_{\ell}-y_{\ell+1})}{2}}e^{i\theta_{1,+,\ell}}g_{1,\ell+1}(k)-e^{i\frac{k(y_{\ell+1}-y_{\ell})}{2}}e^{i\theta_{2,+,\ell}}f_{1,\ell}\left(k+\frac{v_{\ell+1}}{2}-\frac{v_{\ell}}{2}\right)\right).
\end{align*}
Consequently, using the estimates \eqref{estFl}, \eqref{estGl}, \eqref{estGm}, \eqref{F1est} and Lemma \ref{+-interact}, we can verify that there exist functions $A_{\ell,\pm}(f)$ of the form  such that  $g_{\ell,\pm}$ satisfy the following linear system of equations, see Steps $3$ and $4$ of the proof of Lemma $5.1$ from \cite{dispa} for more details,
\begin{align}\label{giant linear system 1}
    A_{1,-}(f)\left({-}k-\frac{v_{2}}{2}\right)=& g_{1,{-},1}\left(k\right)-r_{2}\left({-}k-\frac{v_{2}}{2}\right)g_{2,+,1}\left({-}k-v_{2}\right)\\&{-}s_{2}\left({-}k-\frac{v_{2}}{2}\right)g_{2,{-},1}\left(k\right)+O_{L^{2}}\left(\max_{\ell,\pm}\norm{g_{\ell,\pm}}_{L^{2}}e^{{-}\beta \min_{h\in [m-1]}y_{h}-y_{h+1}}\right),\\  \nonumber
A_{2,+}(f)\left({-}k-\frac{v_{2}}{2}\right)=&g_{2,{+},1}\left(k\right)-r_{1}\left(k+\frac{v_{1}}{2}\right)g_{1,-,1}\left({-}k-v_{1}\right)\\&{+}O_{L^{2}}\left(\max_{\ell,\pm}\norm{g_{\ell,\pm}}_{L^{2}}e^{{-}\beta \min_{h\in [m-1]}y_{h}-y_{h+1}}\right),\\ \label{firstofall}
A_{2,-}(f)\left({-}k-\frac{v_{3}}{2}\right)=&g_{2,{-},1}\left(k\right)-r_{3}\left({-}k-\frac{v_{3}}{2}\right)g_{3,+,1}\left({-}k-v_{3}\right)\\&{-}s_{3}\left({-}k-\frac{v_{3}}{2}\right)g_{3,{-},1}\left( k\right)+O_{L^{2}}\left(\max_{\ell,\pm}\norm{g_{\ell,\pm}}_{L^{2}}e^{{-}\beta \min_{h\in [m-1]}y_{h}-y_{h+1}}\right)
,\\ \nonumber
A_{3,+}\left(f\right)\left({-}k-\frac{v_{3}}{2}\right)=& g_{3,+,1}\left(k\right){-}r_{2}\left(k+\frac{v_{2}}{2}\right)g_{2,-,1}\left({-}k-v_{2}\right)\\&{-}s_{2}\left(k+\frac{v_{2}}{2}\right)g_{2,+,1}\left(k\right)+O_{L^{2}}\left(\max_{\ell,\pm}\norm{g_{\ell,\pm}}_{L^{2}}e^{{-}\beta \min_{h\in [m-1]}y_{h}-y_{h+1}}\right)
,\, ...,\\  \nonumber
A_{\ell,+}\left(f\right)\left({-}k-\frac{v_{\ell}}{2}\right)=&g_{\ell,+,1}\left(k\right){-}r_{\ell-1}\left(k+\frac{v_{\ell-1}}{2}\right)g_{\ell-1,-,1}\left({-}k-v_{\ell-1}\right)\\&{-}s_{\ell-1}\left(k+\frac{v_{\ell-1}}{2}\right)g_{\ell-1,+,1}\left(k\right)+O_{L^{2}}\left(\max_{\ell,\pm}\norm{g_{\ell,\pm}}_{L^{2}}e^{{-}\beta \min_{h\in [m-1]}y_{h}-y_{h+1}}\right),\\ \nonumber
A_{\ell,-}(f)\left({-}k-\frac{v_{\ell+1}}{2}\right)=& g_{\ell,-,1}\left(k\right)-r_{\ell+1}\left({-}k-\frac{v_{\ell+1}}{2}\right)g_{\ell+1,+,1}\left({-}k-v_{\ell+1}\right)\\&{-}s_{\ell+1}\left({-}k-v_{\ell+1}\right)g_{\ell+1,-,1}\left(k\right)+O_{L^{2}}\left(\max_{\ell,\pm}\norm{g_{\ell,\pm}}_{L^{2}}e^{{-}\beta \min_{h\in [m-1]}y_{h}-y_{h+1}}\right), \, ..., \\  \nonumber
A_{m-1,+}(f)\left({-}k-\frac{v_{m-1}}{2}\right)=&g_{m-1,+,1}\left(k\right){-}r_{m-2}\left(k+\frac{v_{m-2}}{2}\right)g_{m-2,-,1}\left({-}k-v_{m-2}\right)\\&{-}s_{m-2}\left(k+\frac{v_{m-2}}{2}\right)g_{m-2,+,1}\left(k\right)+O_{L^{2}}\left(\max_{\ell,\pm}\norm{g_{\ell,\pm}}_{L^{2}}e^{{-}\beta \min_{h\in [m-1]}y_{h}-y_{h+1}}\right),\\ \nonumber
A_{m-1,-}(f)\left({-}k-\frac{v_{m}}{2}\right)=&g_{m-1,-,1}\left(k\right)-r_{m}\left({-}k-\frac{v_{m}}{2}\right)g_{m,+,1}\left({-}k-v_{m}\right)\\&{+}O_{L^{2}}\left(\max_{\ell,\pm}\norm{g_{\ell,\pm}}_{L^{2}}e^{{-}\beta \min_{h\in [m-1]}y_{h}-y_{h+1}}\right),\\ \nonumber
A_{m,+}(f)\left({-}k-\frac{v_{m}}{2}\right)=& g_{m,+,1}\left(k\right){-}r_{m-1}\left(k+\frac{v_{m-1}}{2}\right)g_{m-1,-,1}\left({-}k-v_{m-1}\right)\\&{-}s_{m-1}\left(k+\frac{v_{m-1}}{2}\right)g_{m-1,+,1}\left(k\right)+O_{L^{2}}\left(\max_{\ell,\pm}\norm{g_{\ell,\pm}}_{L^{2}}e^{{-}\beta \min_{h\in [m-1]}y_{h}-y_{h+1}}\right).
\end{align}
Moreover, it is not difficult to verify that the right hand-side of the linear system \eqref{giant linear system 1} is a small perturbation of an application of a operator $T$ of \eqref{form1} on the vector
\begin{equation*}
    \vec{g}(k)=\begin{bmatrix}
        g_{1,-,1}(k)\\
        g_{2,+,1}(k)\\
        g_{2,-,1}(k)\\
        g_{3,+,1}(k)\\
        ...\\
        g_{\ell,+,1}(k)\\
        g_{\ell,-,1}(k)\\
        ...\\
        g_{m,+,1}(k)
    \end{bmatrix}\in L^{2}(\mathbb{R},\mathbb{C}^{2m-2}).
\end{equation*}
More precisely,
\begin{equation*}
    \begin{bmatrix}
        A_{1,-}(f)\left({-}k-\frac{v_{2}}{2}\right)\\
        A_{2,+}(f)\left({-}k-\frac{v_{2}}{2}\right)\\
        A_{2,-}(f)\left({-}k-\frac{v_{3}}{2}\right)\\
        A_{3,+}(f)\left({-}k-\frac{v_{3}}{2}\right)\\
        ...\\
        A_{\ell,+}(f)\left({-}k-\frac{v_{\ell}}{2}\right)\\
        A_{\ell,-}(f)\left({-}k-\frac{v_{\ell+1}}{2}\right)\\
        ...\\
        A_{m,+}(f)\left({-}k-\frac{v_{m}}{2}\right) 
    \end{bmatrix}=T(\vec{g})(k)+O_{L^{2}}\left(\max_{\ell,\pm}\norm{g_{\ell,\pm}}_{L^{2}}e^{{-}\beta \min_{h\in [m-1]}y_{h}-y_{h+1}}\right).
\end{equation*}
As a consequence, since there exists a uniform constant $K>1$ satisfying
\begin{equation*}
   \max_{\ell}\norm{A_{\ell,\pm}(f)}_{L^{2}_{k}(\mathbb{R})}\leq K \norm{f}_{L^{2}_{x}(\mathbb{R})}, 
\end{equation*}
we can deduce from Proposition \ref{lem:claimR} when $\min_{\ell}y_{\ell}-y_{\ell+1}>1$ is large enough that there exists a uniform constant $C>1$ satisfying
\begin{equation*}
    \max_{\ell}\norm{g_{\ell,\pm,1}(k)}_{L^{2}_{k}(\mathbb{R})}\leq C\left[\norm{f}_{L^{2}_{x}(\mathbb{R})}+\max_{\ell,\pm}\norm{g_{\ell,\pm}}_{L^{2}}e^{{-}\beta \min_{h\in [m-1]}y_{h}-y_{h+1}}\right].
\end{equation*}
Similarly, we can verify that
\begin{equation*}
    \max_{\ell}\norm{g_{\ell,\pm,2}(k)}_{L^{2}_{k}(\mathbb{R})}\leq C\left[\norm{f}_{L^{2}_{x}(\mathbb{R})}+\max_{\ell,\pm}\norm{g_{\ell,\pm}}_{L^{2}}e^{{-}\beta \min_{h\in [m-1]}y_{h}-y_{h+1}}\right].
\end{equation*}
Consequently, we can deduce that
\begin{equation*}
    \max_{\ell\in [m],j\in\{1,2\}}\norm{g_{\ell,\pm,j}(k)}_{L^{2}_{k}(\mathbb{R})}\leq 2C\norm{f}_{L^{2}_{x}(\mathbb{R})}.
\end{equation*}
In particular, there exists a unique solution $\vec{g}$ of the linear system \eqref{giant linear system 1}. Therefore, the existence of solution for the system \eqref{giant linear system 1} implies that there exist functions $f_{\ell,\pm}$ satisfying all the equations \eqref{l11}-\eqref{F1est} for a unique $\vec{\phi}$ in the domain of $\mathcal{S},$ see Definition \ref{s0def}.
\par As a consequence, the existence of functions $f_{\ell,\pm}\in L^{2}_{x}(\mathbb{R},\mathbb{C}^{2})$ satisfying \eqref{l11}-\eqref{F1est} implies that the function $f$ satisfies the following identity
\begin{equation}\label{fff}
f(x)=\sum_{\ell=1}^{m}e^{i\frac{\sigma_{3}v_{\ell}x}{2}}\hat{G}_{\omega_{\ell}}\left(e^{iy_{\ell}k}\begin{bmatrix}
\phi_{1,\ell}\left(k+\frac{v_{\ell}}{2}\right)\\
     \phi_{2,\ell}\left(k-\frac{v_{\ell}}{2}\right)
 \end{bmatrix}\right)(x-y_{\ell})\chi_{\ell}(0,x)-\sum_{\ell=1}^{m}e^{i\frac{\sigma_{3}v_{\ell}x}{2}}\vec{v_{d_{\ell}}}(x-y_{\ell})\chi_{\ell}(0,x).   
\end{equation}
In conclusion, since each function $\vec{v}_{d_{\ell}}(x)\in \Ra P_{d}\mathcal{H}_{\ell}$ is generated by a finite set of Schwartz functions having exponential decay, and
\begin{equation*}
    \norm{\mathcal{S}(\phi)(0,x)-\sum_{\ell=1}^{m}e^{i\frac{\sigma_{3}v_{\ell}x}{2}}\hat{G}_{\omega_{\ell}}\left(e^{iy_{\ell}k}\begin{bmatrix}
\phi_{1,\ell}\left(k+\frac{v_{\ell}}{2}\right)\\
     \phi_{2,\ell}\left(k-\frac{v_{\ell}}{2}\right)
 \end{bmatrix}\right)(x-y_{\ell})\chi_{\ell}(0,x)}_{L^{2}_{x}(\mathbb{R})}\lesssim e^{{-}\beta \min_{\ell}y_{\ell}-y_{\ell+1}}\norm{\mathcal{S}(\phi)(0,x)},
\end{equation*}
we can deduce from the identity \eqref{fff} that any $f\in \Ra \mathcal{S}(0)\bigoplus_{\ell=1}^{m}\Ra P_{d}\mathcal{H}_{\ell}$ when $\min_{\ell}y_{\ell}-y_{\ell+1}>1$ is large enough, which is equivalent to the statement of Proposition \ref{princ}. 

\section{Asymptotics of solutions}\label{sec:stabscat}
In this section, we study the behavior of solutions to \ref{p} in the stable space in the sense of Definition \ref{def:stable}  and then show the dispersive decay of solutions in the scattering space in the sense of \ref{def:scatter}.

\subsection{Stable space}\label{prooftunst}
In this subsection, we prove  Theorem \ref{stablecase} to give precise descriptions on solutions.

\begin{proof}[Proof of Theorem \ref{stablecase}]

Firstly,
Proposition \ref{princ} implies that $\vec{\psi}(t,x)$ has a unique representation of the form
\begin{align*}
      \vec{\psi}(t,x)&=\mathcal{S}
\left(\vec{\phi}(t)\right)(t,x)+\sum_{\ell=1}^{m}\sum_{j=1}^{2N_\ell+2M_\ell}a_{j,\ell}(t)e^{i\sigma_{3}\left(\frac{v_{\ell}x}{2}-\frac{v_{\ell}^{2}t}{4}+\omega_{\ell}t+\gamma_{\ell}\right)}\vec{Z}_{j,\ell}(x-v_{\ell}t-y_{\ell})\\
&
+\sum_{\ell=1}^{m}\sum_{j=1}^{K_{\ell,2}}a_{j,\ell}^{1}(t)e^{i\sigma_{3}\left(\frac{v_{\ell}x}{2}-\frac{v_{\ell}^{2}t}{4}+\omega_{\ell}t+\gamma_{\ell}\right)}\vec{Z}^1_{j,\ell}(x-v_{\ell}t-y_{\ell})
\end{align*}
such that there is a uniform constant $C>1$ satisfying
\begin{equation}\label{upperboundprinc}
   \norm{S(\vec{\phi}(t))(t,x)}_{L^{2}_{x}(\mathbb{R})}+\max_{j,\ell}\vert a_{j,\ell}(t) \vert+\max_{j,\ell}\vert a^1_{j,\ell}(t) \vert \leq C\norm{\vec{\psi}(t,x)}_{L^{2}_{x}(\mathbb{R})} \text{, for all $t\geq 0.$}
\end{equation}
Furthermore, estimate \eqref{CCC1Sphi} of Theorem \ref{decaySphi} implies that
\begin{align*}
 &\max_{j,\ell}\left\vert \langle \mathcal{S}(\vec{\phi}(t))(t,x),  e^{i\theta_{j}(t,x)\sigma_{3}}\vec{Z}_{j,\ell}(x-v_{\ell}t-y_{\ell}) \rangle \right\vert+
    \max_{j,\ell}\left\vert \langle \mathcal{S}(\vec{\phi}(t))(t,x),  e^{i\theta_{j}(t,x)\sigma_{3}}\vec{Z}^1_{j,\ell}(x-v_{\ell}t-y_{\ell}) \rangle \right\vert\\
    &\lesssim  e^{{-}\beta (\min_{\ell}(v_{\ell}-v_{\ell+1})t+y_{\ell}-y_{\ell+1})}\norm{\mathcal{S}(\vec{\phi}(t))(t,x)}_{L^{2}_{x}(\mathbb{R})}
    \lesssim  e^{{-}\beta (\min_{\ell}(v_{\ell}-v_{\ell+1})t+y_{\ell}-y_{\ell+1})}\norm{\vec{\psi}(t,x)}_{L^{2}_{x}(\mathbb{R})}.
\end{align*}
As a consequence, we can deduce from the condition \eqref{asyorthsub} that there exists a constant $C>1$ following estimates
\begin{align}\nonumber
\max_{\I\lambda_{j,\ell}>0}\vert a_{j,\ell}(t)\vert\leq & C\left[1+e^{{-}\beta (\min_{\ell}(v_{\ell}-v_{\ell+1})t+y_{\ell}-y_{\ell+1})}\left(\norm{\mathcal{S}(\vec{\phi}(t))(t)}_{L^{2}_{x}(\mathbb{R})}+\max_{\I \lambda_{j,\ell}<0}\vert a_{j,\ell}(t) \vert+\max_{j,\ell}\vert a^1_{j,\ell}(t) \vert\right)\right]\\ \label{globalal}
\leq & C+Ce^{{-}\beta (\min_{\ell}(v_{\ell}-v_{\ell+1})t+y_{\ell}-y_{\ell+1})}\norm{\vec{\psi}(t,x)}_{L^{2}_{x}(\mathbb{R})}.
\end{align}
for all $t\geq 0.$

Here we study the generalized kernel \eqref{eq:generalizedkernel} in more details. Using the notations from \eqref{eq:generalizedkernel} and \eqref{eq:generalizedkernel1}, for $0\leq j\leq K_{\ell,1}$, we have
\begin{equation}
    \vec{Z}_{j,\ell}^0= \vec{Z}_{j,\ell}^1
\end{equation}and for $K_{\ell,1}<j\leq K_{\ell,2}$,
\begin{equation}\label{eq:jtoj'}
    \mathcal{H}_\ell \vec{Z}_{j,\ell}^1= \vec{Z}_{j',\ell}^0
\end{equation}for some $j'\in \{1,\ldots,K_{\ell,1}\}.$
For the convenience of notations, we set for $0\leq j\leq K_{\ell,1}$
\begin{equation}\label{eq:a0}
    a_{j,\ell}^0(t):= a_{j,\ell}^1(t)
\end{equation}
and for $K_{\ell,1}<j\leq K_{\ell,2}$, $a^2_{j',\ell}(t)$ is defined as
\begin{equation}\label{eq:a2}
    a^2_{j',\ell}(t) \vec{Z}_{j',\ell}^0:= \mathcal{H}_\ell(a^1_{j,\ell}\vec{Z}_{j,\ell}^1).
\end{equation}For convenience, for $1\leq j'\leq K_{\ell,1}$, if there is no $K_{1,\ell}<j\leq K_{2,\ell}$ such that \eqref{eq:jtoj'} hold, then $ a^2_{j',\ell}(t) \equiv 0.$

Since $\vec{\psi}(t,x)$ is a  solution  to the  equation \eqref{p}, we can verify the following identity.
\begin{multline}\label{forcingmainepde}
    i\mathcal{S}\left(\partial_{t}\vec{\phi}(t)\right)(t,x)+\sum_{\ell=1}^{m}\sum_{j=1}^{2N_\ell+2M_\ell}\Big(i\dot{a}_{j,\ell}(t)-\lambda_{j,\ell}a_{j,\ell}(t)\Big)e^{i\sigma_{3}\left(\frac{v_{\ell}x}{2}-\frac{v_{\ell}^{2}t}{4}+\omega_{\ell}t+\gamma_{\ell}\right)}\vec{Z}_{j,\ell}(x-v_{\ell}t-y_{\ell})\\
+\sum_{\ell=1}^{m}\sum_{j=1}^{K_{\ell,1}}\Big(i\dot{a}^0_{j,\ell}(t)-\dot{a}^2_{j,\ell}(t)\Big)e^{i\sigma_{3}\left(\frac{v_{\ell}x}{2}-\frac{v_{\ell}^{2}t}{4}+\omega_{\ell}t+\gamma_{\ell}\right)}\vec{Z}^1_{j,\ell}(x-v_{\ell}t-y_{\ell})\\
+\sum_{\ell=1}^{m}\sum_{j>K_{\ell,1}}^{K_{\ell,2}}\Big(i\dot{a}^1_{j,\ell}(t)\Big)e^{i\sigma_{3}\left(\frac{v_{\ell}x}{2}-\frac{v_{\ell}^{2}t}{4}+\omega_{\ell}t+\gamma_{\ell}\right)}\vec{Z}^1_{j,\ell}(x-v_{\ell}t-y_{\ell})\\
    \begin{aligned}
    =&{-}\sum_{\ell_{1}\neq \ell_{2}}\sum_{j=1}^{2N_{\ell_1}+2M_{\ell_1}}a_{j,\ell_1}(t)e^{i\sigma_{3}\left(\frac{v_{\ell_1}x}{2}-\frac{v_{\ell_1}^{2}t}{4}+\omega_{\ell_1}t+\gamma_{\ell_1}\right)}\vec{Z}_{j,\ell_1}(x-v_{\ell_1}t-y_{\ell_1})V_{\ell_2}(t,x)\\
    &{-}\sum_{\ell_{1}\neq \ell_{2}}\sum_{j=1}^{K_{\ell_1,2}}a^1_{j,\ell_1}(t)e^{i\sigma_{3}\left(\frac{v_{\ell_1}x}{2}-\frac{v_{\ell_1}^{2}t}{4}+\omega_{\ell_1}t+\gamma_{\ell_1}\right)}\vec{Z}_{j,\ell_1}(x-v_{\ell_1}t-y_{\ell_1})V_{\ell_2}(t,x)\\
    &{-}\sum_{\ell=1}^{m}V_{\ell}(t,x)\Bigg[\mathcal{S}\left(\vec{\phi}(t)\right)(t,x)\\&{-}e^{i\left(\frac{v_{\ell}x}{2}-\frac{v_{\ell}^{2}t}{4}+\omega_{\ell}t+\gamma_{\ell}\right)\sigma_{3}}\hat{G}_{\omega_{\ell}}\left(
   e^{{-}it(k^{2}+\omega_{\ell})\sigma_{3}}e^{{-}i\gamma_{\ell}\sigma_{3}} \begin{bmatrix}
e^{iy_{\ell}k}\phi_{1,\ell}\left(t,k+\frac{v_{\ell}}{2}\right)\\
       e^{iy_{\ell}k}\phi_{2,\ell}\left(t,k-\frac{v_{\ell}}{2}\right)
    \end{bmatrix}\right)(x-y_{\ell}-v_{\ell}t)\Bigg]
    \\
    =&Forc(t,x).
    \end{aligned}
\end{multline}

In particular, Proposition \ref{princ} implies that the function $Forc(t,x)$ defined in the right-hand side of the linear partial differential equation \eqref{forcingmainepde} has a unique decomposition of the form
\begin{multline}\label{forcdecomposition}
    Forc(t,x)\\=\mathcal{S}(t)(\varphi(t))+\sum_{\ell=1}^{m}\sum_{j=1}^{2N_\ell+2M_\ell}b_{j,\ell}(t)e^{i\sigma_{3}\left(\frac{v_{\ell}x}{2}-\frac{v_{\ell}^{2}t}{4}+\omega_{\ell}t+\gamma_{\ell}\right)}\vec{Z}_{j,\ell}(x-v_{\ell}t-y_{\ell})\\
+\sum_{\ell=1}^{m}\sum_{j=1}^{K_{\ell,2}}b_{j,\ell}^{1}(t)e^{i\sigma_{3}\left(\frac{v_{\ell}x}{2}-\frac{v_{\ell}^{2}t}{4}+\omega_{\ell}t+\gamma_{\ell}\right)}\vec{Z}^1_{j,\ell}(x-v_{\ell}t-y_{\ell}),
\end{multline}
for any $t\geq 0.$

From now on, we are going to use the following notation for the functions denote right-hand side of equation \eqref{forcdecomposition}
\begin{align}\label{ScontForc}
    P_{\mathrm{cont}}(t)\left[Forc(t,x)\right]=&\mathcal{S}(t)(\varphi(t)),\\ \label{bjformula}
    P_{\mathrm{disc},j,\ell}(t)\left[Forc(t,x)\right]=&b_{j,\ell}(t)e^{i\sigma_{3}\left(\frac{v_{\ell}x}{2}-\frac{v_{\ell}^{2}t}{4}+\omega_{\ell}t+\gamma_{\ell}\right)}\vec{Z}_{j,\ell}(x-v_{\ell}t-y_{\ell}),\\
    P^1_{\mathrm{disc},j,\ell}(t)\left[Forc(t,x)\right]=&b^1_{j,\ell}(t)e^{i\sigma_{3}\left(\frac{v_{\ell}x}{2}-\frac{v_{\ell}^{2}t}{4}+\omega_{\ell}t+\gamma_{\ell}\right)}\vec{Z}^1_{j,\ell}(x-v_{\ell}t-y_{\ell})
    .
\end{align}
It is not difficult to verify using Lemma $7.2$ from \cite{dispa} and Definition \ref{s0def} that there exist constants $C>1,\,\beta>0$ satisfying
\begin{equation}\label{l2forcupp}
 \norm{Forc(t,x)}_{H^{1}_{x}(\mathbb{R})}\leq C\left[\norm{\mathcal{S}(\vec{\phi}(t))(t)}_{L^{2}_{x}(\mathbb{R})} +\max_{\ell,j}\vert a_{j,\ell}(t)\vert +\max_{\ell,j}\vert a^1_{j,\ell}(t)\vert \right]e^{{-}\beta \min_{h}(v_{h}-v_{h+1})t+(y_{h}-y_{h+1})}.  \end{equation}
 \newpage
In particular,
\begin{multline}\label{upperboundPDD}
 \max_{j,\ell} \vert b^1_{j,\ell}(t)\vert + \max_{j,\ell} \vert b_{j,\ell}(t)\vert +\norm{P_{\mathrm{cont}}(t)Forc(t,x)}_{L^{2}_{x}(\mathbb{R})}\\
  \leq C\left[\norm{\mathcal{S}(\vec{\phi})(t)}_{L^{2}_{x}(\mathbb{R})} +\max_{\ell,j}\vert a_{j,\ell}(t)\vert +\max_{\ell,j}\vert a^1_{j,\ell}(t)\vert \right]e^{{-}\beta \min_{h}(v_{h}-v_{h+1})t+(y_{h}-y_{h+1})}.
\end{multline}
\par As a consequence, we can deduce from the decomposition \ref{princ11} and \eqref{forcingmainepde} that
\begin{align}\label{ode1}
    &\mathcal{S}\left(\partial_{t}\vec{\phi}(t,k)\right)(t,x)={-}iP_{\mathrm{cont}}(t)\left[Forc(t,x)\right],\\ \label{ode2}
    &i\dot a_{j,\ell}(t)-\lambda_{j,\ell}a_{j,\ell}(t)= b_{j,\ell}(t) \\ \label{ode3}
   & i\dot a^0_{j,\ell}(t)=a^2_{j,\ell}(t)+b^1_{j,\ell}(t), \quad\quad\text{if $1\leq j\leq K_{1,\ell},$}\\ \label{ode4}
   & i\dot a^1_{j,\ell}(t)= b^1_{j,\ell}(t), \quad\quad\text{if $K_{\ell,1}<j\leq K_{\ell,2}$}
\end{align}
for any subindices $j,\,\ell,\,k$ and any $t\geq 0.$ In particular, the differential equation \eqref{ode1} implies that 
\begin{equation}\label{integralT}
    \mathcal{S}(\vec{\phi}(t))(t,x)=\mathcal{S}(\vec{\phi}(0))(t,x)-i\int_{0}^{t}\mathcal{S}(t)\circ \mathcal{S}^{{-}1}(s)P_{\mathrm{cont}}(s)\left[Forc(s,x)\right]\,ds.
\end{equation}
Moreover, using the fundamental theorem of calculus, we can verify from \eqref{ode2} that $a_{j,\ell}(t)$ satisfies the following integral equation.
\begin{equation}\label{ODEofa}
    a_{j,\ell}(t)=e^{{-}i\lambda_{j,\ell}t}a_{j,\ell}(0)-i\int_{0}^{t}e^{{-}i\lambda_{j,\ell}(t-s)}b_{j,\ell}(s)\,ds.
\end{equation}
Consequently, we can verify from \eqref{ODEofa} for all $t\geq 0$ the following estimates for some constant $\beta>0.$
\begin{align}\label{stableupperbound}
   \max_{\I \lambda_{j,\ell} \leq  0}\norm{ a_{j,\ell}(s)}_{L^{\infty}_{s}[0,t]}\lesssim & \norm{\vec{\psi}(0,x)}_{L^{2}_{x}(\mathbb{R})} \\ &{+} e^{{-}\beta \min_{h} y_{h}-y_{h+1}} \max_{j,\ell,\I \lambda_{j,\ell}>0}\norm{ a_{j,\ell}(s)}_{L^{\infty}_{s}[0,t]}
   \\&{+} e^{{-}\beta \min_{h} y_{h}-y_{h+1}} \max_{\ell,1\leq j\leq K_{1,\ell}}\norm{\frac{ a^0_{j,\ell}(s)}{\langle s \rangle }}_{L^{\infty}_{s}[0,t]}\\
   & {+} e^{{-}\beta \min_{h} y_{h}-y_{h+1}} \max_{\ell, K_{\ell,1}<j\leq K_{\ell,2}}\norm{ a^1_{j,\ell}(s)}_{L^{\infty}_{s}[0,t]}\\
   &{+} e^{{-}\beta \min_{h} y_{h}-y_{h+1}} \max_{s\in [0,t]} \norm{\mathcal{S}(\vec{\phi}(s))(s,x)}_{L^{2}_{x}(\mathbb{R})}.
\end{align}
Using estimates \eqref{globalal} and inequality \eqref{CCC1Sphi} of Theorem \ref{decaySphi} on the identity \eqref{integralT}, we can deduce the existence of uniform constants $K,\,C>1$ satisfying
\begin{align*}\nonumber
    \norm{\mathcal{S}(\vec{\phi}(t))(t,x)}_{L^{2}_{x}(\mathbb{R})}
    \leq & \norm{\mathcal{S}(\vec{\phi}(0))(0,x)}_{L^{2}_{x}(\mathbb{R})}+C\int_{0}^{t}\norm{Forc(s,x)}_{L^{2}_{x}(\mathbb{R})}\,ds
    \\ 
    \lesssim & \norm{\mathcal{S}(\vec{\phi}(0))(0,x)}_{L^{2}_{x}(\mathbb{R})}\\&{+}\int_{0}^{t}e^{{-}\beta \min (v_{\ell}-v_{\ell+1})s+y_{\ell}-y_{\ell+1}}\left[1+\max_{\ell,j}\vert a^1_{j,\ell}(s) \vert+\norm{\mathcal{S}(\vec{\phi}(s))(s,x)}_{L^{2}_{x}(\mathbb{R})} \right]\,ds.
\end{align*}
Consequently, for all $t\geq 0,$
\begin{align}\label{upperboundS}
     \norm{\mathcal{S}(\vec{\phi}(t))(t,x)}_{L^{2}_{x}(\mathbb{R})}\lesssim &  \norm{\vec{\psi}(0,x)}_{L^{2}_{x}(\mathbb{R})} + e^{{-}\beta \min_{h} y_{h}-y_{h+1}} \max_{\ell, j, \I \lambda_{j,\ell}>0}\norm{ a_{j,\ell}(s)}_{L^{\infty}_{s}[0,t]}\\ \nonumber
   &{+} e^{{-}\beta \min_{h} y_{h}-y_{h+1}}  \max_{\ell, 1\leq j\leq K_{1,\ell}}\norm{\frac{ a^0_{j,\ell}(s)}{\langle s \rangle }}_{L^{\infty}_{s}[0,t]}\\ \nonumber
   & {+} e^{{-}\beta \min_{h} y_{h}-y_{h+1}} \max_{ \ell, K_{\ell,1}<j\leq K_{\ell,2}}\norm{ a^1_{j,\ell}(s)}_{L^{\infty}_{s}[0,t]}\\ \nonumber
   &{+} e^{{-}\beta \min_{h} y_{h}-y_{h+1}} \max_{s\in [0,t]} \norm{\mathcal{S}(\vec{\phi}(s))(s,x)}_{L^{2}_{x}(\mathbb{R})}.
\end{align}

\par Next, we can verify from \eqref{ode2} that
\begin{align}\label{a1formula}
    a^0_{j,\ell}(t)= & a^0_{j,\ell}(0)-ia^2_{j,\ell}(0)t-\int_{0}^{t}b^1_{j,\ell}(s)\,ds-\int_{0}^{t}\int_{0}^{s}\dot{a}^2_{j,\ell}(s_{1})\,ds_{1}\,ds,,\quad 1\leq j\leq K_{\ell,1},\\ \label{a2formula}
    a^1_{j,\ell}(t)= & a^1_{j,\ell}(0)-i\int_{0}^{t}b^1_{j,\ell}(s)\,ds,\quad K_{\ell,1}<j\leq K_{\ell,2}.
\end{align}
Consequently, using the definition of $b^1_{j,\ell}$ in \eqref{bjformula} and estimate \eqref{l2forcupp}, we can verify the following inequality below for all $t\geq 0.$
\begin{align}\label{kernelajupp}
   \max_{K_{\ell,1}<j\leq K_{\ell,2}}\left\vert  a^1_{j,\ell}(t) \right\vert+\max_{1\leq j\leq K_{\ell,1}}\left\vert \frac{a^0_{j,\ell}(t)}{\langle t \rangle} \right\vert\lesssim &  \norm{\vec{\psi}(0,x)}_{L^{2}_{x}(\mathbb{R})} \\
   &+ e^{{-}\beta \min_{h} y_{h}-y_{h+1}} \max_{\ell,j, \I \lambda_{j,\ell}>0}\norm{ a_{j,\ell}(s)}_{L^{\infty}_{s}[0,t]}\\ \nonumber
   &{+} e^{{-}\beta \min_{h} y_{h}-y_{h+1}}  \max_{\ell, 1\leq j\leq K_{1,\ell}}\norm{\frac{ a^0_{j,\ell}(s)}{\langle s \rangle }}_{L^{\infty}_{s}[0,t]}\\ \nonumber
   & {+} e^{{-}\beta \min_{h} y_{h}-y_{h+1}} \max_{\ell, K_{\ell,1}<j\leq K_{\ell,2}}\norm{ a^1_{j,\ell}(s)}_{L^{\infty}_{s}[0,t]}\\ \nonumber
   &{+} e^{{-}\beta \min_{h} y_{h}-y_{h+1}} \max_{s\in [0,t]} \norm{\mathcal{S}(\vec{\phi}(s))(s,x)}_{L^{2}_{x}(\mathbb{R})}. 
\end{align}
\par As a consequence, using the estimates \eqref{globalal}, \eqref{stableupperbound}, \eqref{upperboundS} and \eqref{kernelajupp}, if $\min_{h}y_{h}-y_{h+1}\gg 1,$ we obtain the existence of a constant $C>1$ satisfying
\begin{equation}\label{globalupperbound}
    \norm{\mathcal{S}(\vec{\phi}(t))(t)}_{L^{2}_{x}(\mathbb{R})}+\max_{j, \ell}\vert a_{j,\ell}(t)\vert+  \max_{\ell, K_{\ell,1}<j\leq K_{\ell,2}}\left\vert  a^1_{j,\ell}(t) \right\vert+\max_{\ell, 1\leq j\leq K_{\ell,1}}\left\vert \frac{a^0_{j,\ell}(t)}{\langle t \rangle} \right\vert\leq C \text{, for all $t\geq 0.$}
\end{equation}
In particular, using Lemma $4.6$ of \cite{KriegerSchlag} in the equation \eqref{ODEofa}, and estimate \eqref{globalupperbound}, we deduce for all $t\geq 0$ that
\begin{equation}\label{unstablemodes}
    a_{j,\ell}(t)=i\int_{t}^{{+}\infty}e^{{-}i\lambda_{\ell,n}(t-s)}b_{j,\ell}(s)\,ds \text{, when $\I \lambda_{j,\ell}>0.$}
\end{equation}
Otherwise, $\lim_{t\to {+}\infty} \vert a_{j,\ell}(t) \vert ={+}\infty.$

\par Moreover, using the identity \eqref{unstablemodes} and estimate \eqref{l2forcupp}, we can deduce the following estimate for all $t\geq 0.$
\begin{align}\label{unstableimproved}
    \max_{\I \lambda_{j,\ell}>0} \norm{ a_{j,\ell}(s)}_{L^{\infty}_{s}[t,{+}\infty)}\lesssim &
   e^{{-}\beta (\min_{h} y_{h}-y_{h+1}+t)} \max_{\ell, 1\leq j\leq K_{\ell,1}}\norm{\frac{ a^0_{j,\ell}(s)}{\langle s \rangle }}_{L^{\infty}_{s}[t,{+}\infty)}\\ \nonumber
   & {+} e^{{-}\beta (\min_{h} y_{h}-y_{h+1}+t)} \max_{\ell, K_{\ell,1}<j\leq K_{\ell,2}}\norm{ a^1_{j,\ell}(s)}_{L^{\infty}_{s}[t,{+}\infty)}\\ \nonumber
   &{+} e^{{-}\beta (\min_{h} y_{h}-y_{h+1}+t)} \max_{s\in [t,{+}\infty)} \norm{\mathcal{S}(\vec{\phi}(s))(s,x)}_{L^{2}_{x}(\mathbb{R})}.
\end{align}
Therefore, using \eqref{unstableimproved}, \eqref{stableupperbound}    \eqref{upperboundS} and \eqref{kernelajupp}, we can deduce the existence of a constant $C>1$ satisfying
\begin{equation}\label{phi0upp}
     \norm{\mathcal{S}(\vec{\phi}(t))(t)}_{L^{2}_{x}(\mathbb{R})}+\max_{j, \ell}\vert a_{j,\ell}(t)\vert+  \max_{\ell, K_{\ell,1}<j\leq K_{\ell,2}}\left\vert  a^1_{j,\ell}(t) \right\vert+\max_{\ell, 1\leq j\leq K_{\ell,1}}\left\vert \frac{a^0_{j,\ell}(t)}{\langle t \rangle} \right\vert\leq C\norm{\vec{\psi}(0,x)}_{L^{2}_{x}(\mathbb{R})} \text{, for all $t\geq 0.$}
\end{equation}
Note that we can verify from \eqref{upperboundPDD} and \eqref{phi0upp} that there exists a $\beta>0$ satisfying
\begin{equation*}
    \max_{j,\ell}\int_{t}^{{+}\infty}\vert b^1_{j,\ell}(s)\vert\,ds\lesssim e^{{-}\beta \min_{\ell} (v_{\ell}-v_{\ell+1})t+y_{\ell}-y_{\ell+1} }\norm{\vec{\psi}(0,x)}_{L^{2}_{x}(\mathbb{R})}.
\end{equation*}
As a consequence, \eqref{ode2} and \eqref{ODEofa} implies that if $\lambda_{\ell,n}\in\mathbb{R},$ then there exists a unique complex constant $a_{j,\ell,\infty}$ satisfying
\begin{equation*}
    \vert a_{j,\ell}(t)-e^{i\lambda_{j,\ell}t}a_{j,\ell,\infty} \vert\lesssim e^{{-}\beta \min_{\ell} (v_{\ell}-v_{\ell+1})t+y_{\ell}-y_{\ell+1} }\norm{\vec{\psi}(0,x)}_{L^{2}_{x}(\mathbb{R})} \text{, for all $t\geq 0,$}
\end{equation*}
from which we deduce the Property $\mathrm{(P3)}$ of Theorem \ref{stablecase}.
\par Next, using \eqref{phi0upp}, we can verify when $\quad 1\leq j\leq K_{\ell,1}$ that
\begin{align*}
\left\vert \int_{t}^{{+}\infty}b^1_{j,\ell}(s)\,ds\right\vert +\left\vert \int_{t}^{{+}\infty}\int_{s}^{{+}\infty}\dot{a}^2_{j,\ell}(s_{1})\,ds_{1}\,ds\right\vert\lesssim e^{{-}\beta \min_{\ell} (v_{\ell}-v_{\ell+1})t+y_{\ell}-y_{\ell+1} }\norm{\vec{\psi}(0,x)}_{L^{2}_{x}(\mathbb{R})} \text{, for all $t\geq 0,$} 
\end{align*}
and when $K_{\ell,1}<j\leq K_{\ell,2}$ that
\begin{align*}
\int_{t}^{{+}\infty}\left\vert b^1_{j,\ell}(s)\right\vert\,ds \lesssim e^{{-}\beta \min_{\ell} (v_{\ell}-v_{\ell+1})t+y_{\ell}-y_{\ell+1} }\norm{\vec{\psi}(0,x)}_{L^{2}_{x}(\mathbb{R})}\text{, for all $t\geq 0.$}
\end{align*}
 Consequently, we can deduce using estimates \eqref{globalal} and \eqref{upperboundPDD} on the identities \eqref{a1formula} and \eqref{a2formula} that there exist complex constants $a_{\ell,\infty},\,c_{\ell,\infty}$ and a real constant $C>1$ satisfying for all $t\geq 0$
\begin{align}\label{kernelsize}
   \vert a^1_{j,\ell}(t)-c_{j,\ell,\infty}\vert\leq C e^{{-}\beta\min_{h}(y_{h}-y_{h+1}+(v_{h}-v_{h+1})t)} \norm{\vec{\psi}(0,x)}_{L^{2}_{x}(\mathbb{R})},\quad K_{1,\ell}<j\leq K_{\ell,2}\\
     \vert a^0_{j,\ell}(t)-a_{j,\ell,\infty}-c_{j,\ell,\infty}t \vert\leq C e^{{-}\beta\min_{h}(y_{h}-y_{h+1}+(v_{h}-v_{h+1})t)} \norm{\vec{\psi}(0,x)}_{L^{2}_{x}(\mathbb{R})},\quad \quad 1\leq j\leq K_{\ell,1}.
\end{align}
 
\par Similarly,  we can verify when $\lambda_{\ell,n}\in\mathbb{R}$ that there exists $a_{j,\ell,\infty}\in\mathbb{C}$ satisfying 
\begin{equation*}
    \vert a_{j,\ell}(t)- e^{{-}i\lambda_{j,\ell} t}a_{j,\ell,\infty} \vert \leq C e^{{-}\beta\min_{h}(y_{h}-y_{h+1}+(v_{h}-v_{h+1})t)} \norm{\vec{\psi}(0,x)}_{L^{2}_{x}(\mathbb{R})} \text{, for all $t\geq 0,$}
\end{equation*}
for a uniform constant $C>1.$

Finally, since $P_{\mathrm{cont}}(t)\in \Ra \mathcal{S}(t)$ and $g(t)=P_{\mathrm{cont}}(t)$ is continuous in the operator norm on the set $[0,{+}\infty),$ Theorem \ref{CCC1} implies for any $T>0$ that there exists a unique function $\vec{f}(T,k)\in L^{2}_{k}(\mathbb{R})$ satisfying
\begin{equation*}
    \mathcal{S}(\vec{f}(T,k))(t)=\mathcal{S}(\vec{\phi}(0))(t,x)-i\int_{0}^{t}\mathcal{S}(t)\circ \mathcal{S}^{{-}1}(s)P_{\mathrm{cont}}(s)\left[Forc(s,x)\right]\,ds.
\end{equation*}
Moreover, Theorem \ref{decaySphi} and estimates \eqref{l2forcupp}, \eqref{phi0upp} imply that
\begin{align*}
\lim_{T\to{+}\infty}\norm{\int_{t}^{T}\mathcal{S}(t)\circ \mathcal{S}^{{-}1}(s)P_{\mathrm{cont}}(s)\left[Forc(s,x)\right]\,ds}_{L^{2}_{x}(\mathbb{R})}
\lesssim & e^{{-}\beta\min_{h}(y_{h}-y_{h+1}+(v_{h}-v_{h+1})t)}\norm{\vec{\psi}(0,x)}_{L^{2}_{x}(\mathbb{R})}.
\end{align*}
Therefore, can deduce using estimate \eqref{CCC1Sphi} of Theorem \eqref{decaySphi} $\vec{f}(T,k)$ is a Cauchy sequence in $L^{2}_{k}(\mathbb{K})$ having a limit $\vec{\phi}_{\infty}(k)$ belonging to the domain of $\mathcal{T}.$ Moreover, $\vec{f}_{\infty}(k)$ satisfies for all $t\geq 0$
\begin{equation*}
    \norm{\mathcal{S}(\vec{\phi}_{\infty}(k))(t,x)-\mathcal{S}(\vec{\phi}(t,k))(t,x)}\lesssim e^{{-}\beta\min_{h}(y_{h}-y_{h+1}+(v_{h}-v_{h+1})t)}\norm{\vec{\psi}(0,x)}_{L^{2}_{x}(\mathbb{R})}. 
\end{equation*}
\par Furthermore, using Theorem \ref{decaySphi}, we can verify the following inequalities.
\begin{align}\label{ineqForc1}
    \norm{\mathcal{S}(t)\circ\mathcal{S}^{{-}1}(s)P_{\mathrm{cont}}(s)Forc(s)}_{L^{\infty}_{x}(\mathbb{R})}\lesssim & \norm{\mathcal{S}(0)\circ\mathcal{S}^{{-}1}(s)P_{\mathrm{cont}}(s)\left[Forc(s,x)\right]}_{H^{1}_{x}(\mathbb{R})},\\ \label{ineqForc2}
     \norm{\frac{\mathcal{S}(t)\circ\mathcal{S}^{{-}1}(s)P_{\mathrm{cont}}(s)Forc(s)}{\langle x-v_{\ell}t-y_{\ell}\rangle}}_{L^{\infty}_{x}(\mathbb{R})}\lesssim &\norm{\mathcal{S}(0)\circ\mathcal{S}^{{-}1}(s)P_{\mathrm{cont}}(s)\left[Forc(s,x)\right]}_{H^{1}_{x}(\mathbb{R})}.
\end{align}
Consequently, we can deduce 
from Proposition \ref{Coercivityproperty} and estimate \eqref{l2forcupp} that
\begin{align*}
   \norm{\mathcal{S}(0)\circ \mathcal{S}^{{-}1}(s)P_{\mathrm{cont}}(s)Forc(s)}_{H^{1}_{x}(\mathbb{R})}\lesssim &  \norm{P_{\mathrm{cont}}(s)Forc(s)}_{H^{1}_{x}(\mathbb{R})}
   \\
   \lesssim & e^{{-}\beta\min_{h}(y_{h}-y_{h+1}+(v_{h}-v_{h+1})s)}\norm{\vec{\psi}(0,x)}_{L^{2}_{x}(\mathbb{R})}.
\end{align*}
Consequently, we  conclude the proof of Theorem \ref{stablecase} using the fundamental theorem of calculus. 
\end{proof}

\subsection{Decay estimates}
In this section, we prove the dispersive decay estimates for solutions in the scattering space in the sense of Definition \eqref{def:scatter}.

\begin{proof}[Proof of Theorem \ref{decaytheoscatter}] 
 Clearly, if $\vec{\psi}$ is in the scattering space in the sense of Definition \eqref{def:scatter}, then it is in the stable space in the sense of Definition \eqref{def:stable}. Applying Theorem \ref{stablecase} and using asymptotic conditions \eqref{asyorth},  all $a_{j,\ell}$ and $a^1_{j,\ell}$ from $(\mathrm{P}1),\,(\mathrm{P}2),\,(\mathrm{P}3),\,(\mathrm{P}4)$ will decay exponentially. Since all eigenfunctions are exponentially localized, it follows that the decay estimates for $\vec{\psi}(t)$ are fully determined by $\mathcal{S}(\vec{\phi}(t))(t,x)$. Then from $(\mathrm{P}6)$ it is reduced to the study of dispersive properties of $\mathcal{S}(\vec{\phi}_\infty)(t,x)$ which were established in Theorem \ref{decaySphi}.
\par More precisely, using Theorem \ref{stablecase}, we can verify that any solution $\vec{\psi}(t,x)\in H^{1}_{x}(\mathbb{R})$ in the scattering space is of the form
\begin{equation*}
  \vec{\psi}(t,x)=\mathcal{S}(\vec{\phi}_{\infty})(t,x)+\vec{r}(t,x),  
\end{equation*}
such that for all $t\geq 0$
\begin{equation*}
    \norm{\vec{r}(t,x)}_{H^{1}_{x}(\mathbb
    {R})}\lesssim e^{{-}\beta (c t+\min_{\ell}y_{\ell}-y_{\ell+1})}\norm{\mathcal{S}(\vec{\phi}_{\infty})(t,x)}_{L^{2}_{x}(\mathbb{R})}\sim e^{{-}\beta (c t+\min_{\ell}y_{\ell}-y_{\ell+1})}\norm{\vec{\psi}(0,x)}_{L^{2}_{x}(\mathbb{R})} ,
\end{equation*}
 for a $c>0$ depending only on the sets $\{\I \lambda_{j,\ell}\},\,\{\alpha_{\ell}\}$ and the real value $\min_{\ell}v_{\ell}-v_{\ell+1}>0.$ As a consequence, we can verify the following estimates.
 \begin{align*}
    \norm{\vec{\psi}(t,x)}_{L^{\infty}_{x}(\mathbb{R})}\lesssim &  \norm{\mathcal{S}(\vec{\phi}_{\infty})(t,x)}_{L^{\infty}_{x}(\mathbb{R})}+O\left(  e^{{-}\beta (c t+\min_{\ell}y_{\ell}-y_{\ell+1})}\norm{\vec{\psi}(0,x)}_{L^{2}_{x}(\mathbb{R})}\right),\\
    \norm{\frac{\vec{\psi}(t,x)}{\langle x-y_{\ell}-v_{\ell}t\rangle}}_{L^{\infty}_{x}(\mathbb{R})}\lesssim & \norm{\frac{\mathcal{S}(\vec{\phi}_{\infty})(t,x)}{\langle x-y_{\ell}-v_{\ell}t\rangle}}_{L^{\infty}_{x}(\mathbb{R})}+ O\left(  e^{{-}\beta (c t+\min_{\ell}y_{\ell}-y_{\ell+1})}\norm{\vec{\psi}(0,x)}_{L^{2}_{x}(\mathbb{R})}\right),
    \\
\norm{\frac{\partial_{x}\vec{\psi}(t,x)}{\langle x-y_{\ell}-v_{\ell}t\rangle^{1+\frac{p^{*}-2}{2p^{*}}+\alpha}}}_{L^{2}_{x}(\mathbb{R})}\lesssim & \norm{\frac{\partial_{x}\mathcal{S}\left(\vec{\phi}_{\infty}\right)(t,x)}{\langle x-y_{\ell}-v_{\ell}t\rangle^{1+\frac{p^{*}-2}{2p^{*}}+\alpha}}}_{L^{2}_{x}(\mathbb{R})}\\&{+}O\left(  e^{{-}\beta (c t+\min_{\ell}y_{\ell}-y_{\ell+1})}\norm{\vec{\psi}(0,x)}_{L^{2}_{x}(\mathbb{R})}\right).
 \end{align*}
 \par Therefore, Theorem \ref{decaySphi} implies that there exists a uniform constant $K>1$ for which $\vec{\psi}(t,x)$ satisfies the following decay estimates for all $t\geq s\geq 0.$
  \begin{align}\label{bshit1}
  \norm{\vec{\psi}(t,x)}_{L^{\infty}_{x}(\mathbb{R})}\leq &\frac{K}{(t-s)^{\frac{1}{2}}}\norm{\mathcal{S}(\vec{\phi}_{\infty})(s,x)}_{L^{1}_{x}(\mathbb{R})}\\&{+}\frac{K e^{{-}\beta\min_{\ell}(y_{\ell}-y_{\ell+1}+cs)}}{(t-s)^{\frac{1}{2}}}\norm{\mathcal{S}(\vec{\phi}_{\infty})(s,x)}_{L^{2}_{x}(\mathbb{R})},\\ \label{bshit2}
\norm{\frac{\vec{\psi}(t,x)}{(1+\vert x-y_{\ell}-v_{\ell}t\vert)}}_{L^{\infty}_{x}(\mathbb{R})}\leq & \frac{K (s+y_{1}-y_{m})}{(t-s)^{\frac{3}{2}}} \norm{\mathcal{S}(\vec{\phi}_{\infty})(s,x)}_{L^{1}_{x}(\mathbb{R})}\\  
&{+}\frac{K}{(t-s)^{\frac{3}{2}}}\max_{\ell}\norm{(1+\vert x-y_{\ell}-v_{\ell}s\vert )\chi_{\ell}(s,x)\mathcal{S}(\vec{\phi}_{\infty})(s,x)}_{L^{1}_{x}(\mathbb{R})}\\
&{+}K e^{{-}\beta\min_{\ell}(y_{\ell}-y_{\ell+1}+ct)}\norm{\mathcal{S}(\vec{\phi}_{\infty})(s,x)}_{L^{2}_{x}(\mathbb{R})},\\ \label{bshit3} 
\max_{\ell}\norm{\frac{\partial_{x}\vec{\psi}(t)}{\langle x-v_{\ell}t-y_{\ell} \rangle^{1+\frac{p^{*}-2}{2p^{*}}+\alpha}}}_{L^{2}_{x}(\mathbb{R})}
\leq & \frac{K\max_{\ell}\norm{\langle x-y_{\ell}-v_{\ell}s\rangle\chi_{\ell}(s,x)\langle \partial_{x}\rangle\mathcal{S}(\vec{\phi}_{\infty})(s,x)}_{L^{1}_{x}(\mathbb{R})}^{\frac{2-p}{p}}\norm{\mathcal{S}(\vec{\phi}_{\infty})(s,x)}_{H^{1}_{x}(\mathbb{R})}^{\frac{2(p-1)}{p}}}{(t-s)^{\frac{3}{2}(\frac{1}{p}-\frac{1}{p^{*}})}} \\ \nonumber
 &{+}K\frac{(s+y_{1}-y_{m})}{(t-s)^{\frac{3}{2}(\frac{1}{p}-\frac{1}{p^{*}})}}\norm{\mathcal{S}(\vec{\phi}_{\infty})(s,x)}_{W^{1,1}_{x}(\mathbb{R})}^{\frac{2-p}{p}}\norm{\mathcal{S}(\vec{\phi}_{\infty})(s,x)}_{H^{1}_{x}(\mathbb{R})}^{\frac{2(p-1)}{p}}
 \\ \nonumber
 &{+}Ke^{{-}\beta\min_{\ell}(y_{\ell}-y_{\ell+1}+ct)}\norm{\mathcal{S}(\vec{\phi}_{\infty})(s,x)}_{H^{1}_{x}(\mathbb{R})}.
\end{align}
 Moreover, using properties $\mathrm{(P1)},\,\mathrm{(P2)},\,\mathrm{(P3)}$ and $\mathrm{(P4)}$ of Theorem \ref{stablecase}, we can verify when $\vec{\psi}(t,x)$ is in the scattering space that the remainder $\vec{r}(t,x)=\vec{\psi}(t,x)-\mathcal{S}(\vec{\phi}_{\infty})(t,x)$ satisfies the following decay estimates for all $s\geq 0$
 \begin{align*}
     \max_{\ell}\norm{\langle x-y_{\ell}-v_{\ell}s\rangle \chi_{\ell}(s,x)\langle \partial_{x}\rangle\vec{r}(s,x)}_{L^{1}_{x}(\mathbb{R})}+\norm{\vec{r}(s,x)}_{H^{1}_{x}(\mathbb{R})}\lesssim e^{{-}\beta\min_{\ell}(y_{\ell}-y_{\ell+1}+ct)}\norm{\mathcal{S}(\vec{\phi}_{\infty})(s,x)}_{L^{2}_{x}(\mathbb{R})}.
 \end{align*}
 In conclusion, we can obtain from the estimates \eqref{bshit1}, \eqref{bshit2} and \eqref{bshit3} that $\vec{\psi}(t,x)$ satisfies \eqref{linfty}, \eqref{weightedlinfty} and \eqref{weightedderivative}, when $t\geq s\geq 0.$
 
\end{proof}

\section{Existence of wave operators: proof of Proposition \ref{remark4}}\label{prooftunst2}

 Concerning the proof of Proposition \ref{remark4}, we will construct a solution of \eqref{p} of the form 
\begin{equation*}
\vec{\psi}(t)=\mathcal{S}(\vec{\phi})(t,x)+\vec{r}(t,x),    
\end{equation*}
for a constant function $\vec{\phi}\in L^{2}_{k}(\mathbb{R},\mathbb{C}^{2})$ belonging to the domain of $\mathcal{S}.$ Moreover, using Proposition \ref{princ}, we can denote $\vec{r}(t,x)$ uniquely for any $t\geq 0$ as
\begin{align*}
 \vec{r}(t,x)=& \mathcal{S}(\varphi_{1}(t,k))(t,x)\\&
+\sum_{\ell=1}^{m}\sum_{j=1}^{2N_\ell+2M_\ell}d_{j,\ell}(t)e^{i\sigma_{3}\left(\frac{v_{\ell}x}{2}-\frac{v_{\ell}^{2}t}{4}+\omega_{\ell}t+\gamma_{\ell}\right)}\vec{Z}_{j,\ell}(x-v_{\ell}t-y_{\ell})\\
&
+\sum_{\ell=1}^{m}\sum_{j=1}^{K_{\ell,2}}d_{j,\ell}^{1}(t)e^{i\sigma_{3}\left(\frac{v_{\ell}x}{2}-\frac{v_{\ell}^{2}t}{4}+\omega_{\ell}t+\gamma_{\ell}\right)}\vec{Z}^1_{j,\ell}(x-v_{\ell}t-y_{\ell})
\end{align*}
for a function $\varphi_{1}(t,\Diamond)\in L^{2}_{k}(\mathbb{R},\mathbb{C}^{2})$ belonging to the domain of $\mathcal{S},$ and $d_{j,\ell}(t),\,d^1_{j,\ell}(t)\in\mathbb{C}$ for all $t\geq 0.$ Consequently, similarly to the proof of Theorem \ref{stablecase} in the previous section, we can verify using Lemma $7.2$ from \cite{dispa} that
\begin{equation*}
    i \partial_t \vec{r}(t)+\left(\begin{array}{cc}
\partial_x^2 & 0 \\
0 & {-}\partial_x^2
\end{array}\right) \vec{r}(t)+\sum_{j=1}^m V_j\left(t \right) \vec{r}(t)=Forc(t,x),
\end{equation*}
for a complex-valued function $Forc(t,x)$ satisfying
\begin{equation*}
    \norm{Forc(t,x)}_{L^{2}_{x}(\mathbb{R})}\lesssim e^{{-}\beta \min_{h}(v_{h}-v_{h+1})t+(y_{h}-y_{h+1})},
\end{equation*}
for a constant $\beta>0.$ Using Proposition \ref{princ} again, we can denote $Forc(t,x)$ by a finite sum of the form \eqref{forcdecomposition}. 

\par Using the same notations as in \eqref{eq:a0}, \eqref{a2},  the condition 
\begin{equation}
   \lim_{t\to{+}\infty} \max_{\ell,j} \vert d^{1}_{j,\ell}(t)\vert=0,
\end{equation}

 implies that $d^1_{j,\ell}$ satisfies the following identities for all $t\geq 0.$
\begin{align}\label{a1}
    d^1_{j,\ell}(t)=&i\int_{t}^{{+}\infty}b_{j,\ell}(s)\,ds+\int_{t}^{{+}\infty}\int_{s}^{{+}\infty} \dot{d}^2_{j,\ell}(t)(s_{1})\,ds_{1}\,ds,\,\,1\leq j\leq K_{\ell,1}\\ \label{a2}
    d^1_{j,\ell}(t)=&i\int_{t}^{{+}\infty}b^1_{j,\ell}(s)\,ds,\,K_{\ell,1}<j\leq K_{\ell,2}.
\end{align}
Since \eqref{globalal}, \eqref{upperboundPDD} imply that $\vert b^1_{j,\ell}(s)\vert$ has exponential decay rate, it is not difficult to check that the unique bounded function $d^1_{j,\ell}(t)$ satisfying \eqref{ode3}, \eqref{ode4} and
\begin{equation*}
    \lim_{t\to{+}\infty} d^1_{j,\ell}(t)=0
\end{equation*}
is the one denoted in \eqref{a1} and \eqref{a2}.

\par Consequently, using the Banach fixed-point theorem, we can verify the existence and uniqueness of a unique function $\vec{r}(t,x)\in L^{2}_{x}(\mathbb{R})$ satisfying
\begin{equation*}
    \sup_{t\geq 0}e^{\epsilon t}\norm{\vec{r}(t)}_{L^{2}_{x}(\mathbb{R})}<{+}\infty
\end{equation*}
for an small $\epsilon\in (0,1)$ solving all the integral equations \eqref{a1}, \eqref{a2},
and 
\begin{align*}
    d_{j,\ell}(t)=&{-}i\int_{0}^{t}e^{{-}i\lambda_{j,\ell}(t-s)}b_{j,\ell}(s)\,ds \text{, for  $\I \lambda_{\ell,n}<0,$}\\
     \mathcal{S}(\vec{\varphi_1}(t))(t,x)=&i\int_{t}^{{+}\infty}\mathcal{S}(t)\circ \mathcal{S}^{{-}1}(s)P_{\mathrm{cont}}(s)\left[Forc(s,x)\right]\,ds,\\
     d_{j,\ell}(t)=&{-}i\int_{{+}\infty}^{t} e^{{-}i\lambda_{\ell,n}(t-s)}b_{j,\ell}(s)\,ds \text{, if $\I \lambda_{\ell,n}\geq 0.$}
\end{align*}
As a consequence, there is $\epsilon\in (0,1)$ such that $\mathcal{T}(\vec{\phi})(t,x)=\mathcal{S}(\vec{\phi})(t,x)+\vec{r}(t,x)$ is a strong solution of \eqref{p} that satisfies Proposition \ref{remark4}.
\par The proof of the existence of solutions $\mathfrak{G}_{\ell}(\vec{Z}_{j,\lambda_{\ell}})(t,x)$ satisfying \eqref{G1solll} or \eqref{G2soll} in Proposition \ref{remark4} is completely analogous.

\bibliographystyle{plain}
\bibliography{ref}

\begin{thebibliography}{10}

\bibitem{Busper1}
V.~S. Buslaev and G.~S Perelman.
\newblock Nonlinear scattering: The states which are close to a soliton.
\newblock {\em Journal of Mathematical Sciences}, 77(3):3161--3169, 1995.

\bibitem{Cai}
Kaihua Cai.
\newblock Fine properties of charge transfer models.
\newblock {\em arXiv:math-ph/0311048}, page 44pp, 11 2003.

\bibitem{GCwavecmp}
Gong Chen.
\newblock Multisolitons for the defocusing energy critical wave equation with potentials.
\newblock {\em Communications in Mathematical Physics}, 1:45--82, 364 2020.

\bibitem{GCwave}
Gong Chen.
\newblock Strichartz estimates for wave equations with charge transfer hamiltonians.
\newblock {\em Memoirs of the American Mathematical Society}, 1339:1503--1560, 273 2021.

\bibitem{CJdicho}
Gong Chen and Jacek Jendrej.
\newblock Lyapunov-type characterisation of exponential dichotomies with applications to the heat and {K}lein–{G}ordon equations.
\newblock {\em Transactions of the American Mathematical Society}, 372(10):7461--7496, 2019.

\bibitem{CJlinear}
Gong Chen and Jacek Jendrej.
\newblock Strichartz estimates for {K}lein-{G}ordon equations with moving potentials.
\newblock {\em arXiv:2210.03462}, 2022.

\bibitem{CJnonlinear}
Gong Chen and Jacek Jendrej.
\newblock Asymptotic stability and classification of multi-solitons for {K}lein-{G}ordon equations.
\newblock {\em Communications in Mathematical Physics}, 1, 405 2024.

\bibitem{dispa}
Gong Chen and Abdon Moutinho.
\newblock Dispersive analysis for one-dimensional charge transfer models.
\newblock {\em ArXiv}, 05 2025.

\bibitem{DeiTru}
Percy Deift and Eugene Trubowitz.
\newblock Inverse scattering on the line.
\newblock {\em Communications on Pure and Applied Mathematics}, 32:121--251, 1979.

\bibitem{Phaseanal}
Graf G.M.
\newblock Phase space analysis of the charge transfer model.
\newblock {\em Helvetica Physica Acta}, 63:107--138, 01 1990.

\bibitem{holmer2012phase}
Justin Holmer and Quanhui Lin.
\newblock Phase-driven interaction of widely separated nonlinear schr{\"o}dinger solitons.
\newblock {\em Journal of Hyperbolic Differential Equations}, 9(03):511--543, 2012.

\bibitem{holmer2007soliton}
Justin Holmer, Jeremy Marzuola, and Maciej Zworski.
\newblock Soliton splitting by external delta potentials.
\newblock {\em Journal of Nonlinear Science}, 17(4):349--367, 2007.

\bibitem{holmer2007slow}
Justin Holmer and Maciej Zworski.
\newblock Slow soliton interaction with delta impurities.
\newblock {\em arXiv preprint math/0702465}, 2007.

\bibitem{holmer2008soliton}
Justin Holmer and Maciej Zworski.
\newblock Soliton interaction with slowly varying potentials.
\newblock {\em International Mathematics Research Notices}, 2008(9):rnn026--rnn026, 2008.

\bibitem{KriegerSchlag}
Joachim Krieger and Wilhelm Schlag.
\newblock Stable manifolds for all monic supercritical focusing nonlinear {S}chr\"odinger equations in one dimension.
\newblock {\em Journal of the American Mathematical Society}, 19(4):815--920, 2006.

\bibitem{Collisionls}
Abdon Moutinho.
\newblock Collision of two solitons for 1d nonlinear {S}chr\"odinger equation with the same mass.
\newblock {\em Arxiv}, 2024.

\bibitem{perelmanasym}
Galina Perelman.
\newblock Some results on the scattering of weakly interacting solitons for nonlinear {S}chr\"odinger equation.
\newblock {\em Advances in Partial Differential Equations}, 14, 1997.

\bibitem{Perelman4}
Galina Perelman.
\newblock Asymptotic stability of multi-soliton solutions for nonlinear {S}chr\"odinger equations.
\newblock {\em Communications in Partial Differential Equations}, 29(7-8):1051--1095, 2004.

\bibitem{nlsstates}
Igor Rodnianski, Wilhelm Schlag, and Avy Soffer.
\newblock Asymptotic stability of n-soliton states of nls.
\newblock 10 2003.

\bibitem{Dispesti}
Igor Rodnianski, Willhem Schlag, and Avy Soffer.
\newblock Dispersive analysis of charge transfer models.
\newblock {\em Communications in Pure and Applied Mathematics}, 58(2):149--216, 2005.

\bibitem{WSSurvey}
Wilhelm Schlag.
\newblock Dispersive estimates for {S}chr\"odinger operators: a survey.
\newblock {\em Mathematical aspects of nonlinear dispersive equations}, 163:255--285, 02 1991.

\bibitem{WSsimulation}
Wilhelm Schlag.
\newblock Private communications.
\newblock 2014.

\bibitem{Wederwave}
Ricardo Weder.
\newblock The $w^{k,p}$-continuity of the {S}chr\"odinger wave operators on the line.
\newblock {\em Communications in Mathematical Physics}, 208(2):507--520, 1999.

\bibitem{Geocharge}
Ulrich Wüller.
\newblock Geometric methods in scattering theory of the charge transfer model.
\newblock {\em Duke Mathematical Journal}, 62, 02 1991.

\bibitem{Yafaev}
D.~Yafaev.
\newblock {\em Mathematical Scattering Theory: Analytic Theory}.
\newblock AMS, 01 2010.

\bibitem{Yajimachannels}
Kenji Yajima.
\newblock A multi-channel scattering theory for some time dependent hamiltonians, charge transfer problem.
\newblock {\em Communications in Mathematical Physics}, 75:153–178, 1980.

\bibitem{Zielinski1}
Lech Zielinski.
\newblock Asymptotic completeness for multiparticle dispersive charge transfer models.
\newblock {\em Journal of Functional Analysis}, 150:453--470, 1997.

\end{thebibliography}
\appendix

\end{document}